\newcommand{\C}{\mathcal{C}}
\newcommand{\R}{\mathbb{R}}
\newcommand{\N}{\mathbb{N}}
\newcommand{\HH}{\mathcal{H}}
\newcommand{\W}{\mathcal{W}}
\newcommand{\A}{\mathcal{A}}
\newcommand{\B}{\mathcal{B}}
\newcommand{\K}{\mathcal{K}}
\newcommand{\V}{\mathcal{V}}
\newcommand{\U}{\mathcal{U}}
\newcommand{\Ss}{\mathcal{S}}
\newcommand{\OO}{{O}}
\newcommand{\SP}{${\,}^\prime$}
\newcommand{\Range}{\operatorname{Ran}}
\newcommand{\Ker}{\operatorname{Ker}}
\newcommand{\rank}{\operatorname{rank}}
\newcommand{\Span}{\operatorname{Span}}
\newcommand{\codim}{\operatorname{codim}}
\newcommand{\sgn}{\operatorname{sgn}}
\newcommand{\arxiv}[1]{\href{https://arxiv.org/abs/#1}{ArXiv:#1}}
\renewcommand{\a}{\boldsymbol{a}} 
\renewcommand{\b}{\boldsymbol{b}} 
\renewcommand{\c}{\boldsymbol{c}} 
\renewcommand{\A}{\boldsymbol{A}}
\renewcommand{\B}{\boldsymbol{B}}
\renewcommand{\C}{\boldsymbol{C}}
\newtheorem{theorem}{Theorem}[section]
\newtheorem{lemma}[theorem]{Lemma}
\newtheorem{cor}[theorem]{Corollary}
\newtheorem{prop}[theorem]{Proposition}
\theoremstyle{definition}
\newtheorem*{defi}{Definition}
\theoremstyle{remark}
\newtheorem{rem}[theorem]{Remark}
\newtheorem*{remark}{Remark}
\let\c@table\c@figure 
\let\ftype@table\ftype@figure 
\title{Singular limits of sign-changing weighted eigenproblems}
\author{Derek Kielty}
\address{Department of Mathematics, University of Illinois, Urbana, IL 61801, U.S.A.}
\email{dkielty2@illinois.edu}
\keywords{spectral theory, indefinite, singular limits, mixed boundary conditions, Dirichlet, Neumann, Laplacian, Bi-Laplacian, dynamical boundary conditions}
\subjclass[2010]{\text{Primary 35P15. Secondary 47A07}}
\begin{document}
\maketitle

\begin{abstract}
Consider the eigenvalue problem generated by a fixed differential operator with a sign-changing weight on the eigenvalue term. We prove that as part of the weight is rescaled towards negative infinity on some subregion, the spectrum converges to that of the original problem restricted to the complementary region. On the interface between the regions the limiting problem acquires Dirichlet-type boundary conditions. Our main theorem concerns eigenvalue problems for sign-changing bilinear forms on Hilbert spaces. We apply our results to a wide range of PDEs: second and fourth order equations with both Dirichlet and Neumann-type boundary conditions, and a problem where the eigenvalue appears in both the equation and the boundary condition. 
\end{abstract}


\section{\bf{Introduction}}
Heat conduction and vibrational modes of inhomogeneous materials are modeled by the weighted eigenvalue problem $- \Delta u = \lambda b u$, where $\lambda$ is an eigenvalue and $b$ is a positive weight function representing a pointwise heat capacity or mass density. When $b$ is allowed to change sign, new phenomena emerge modeling ecological population dynamics in the presence of a favorable ($b>0$), neutral ($b=0$), or unfavorable ($b<0$) food source (see the introduction of the monograph by Belgacem \cite{Bel}). Much of the intuition and many of the techniques used to study the spectrum and eigenfunctions are no longer applicable. In particular, these sign-changing problems typically have a discrete spectrum of eigenvalues that accumulate at both $+\infty$ and $-\infty$, in contrast with the positive weight case. 

In this paper we investigate the singular ``large negative weight limit" of such eigenvalue problems first in the Hilbert space setting and then applied to a variety of partial differential equations. Roughly speaking, we show that if $c(x)$ is a nonnegative weight then the eigenvalues of the problem with weight $b - tc$ converge as $t \to \infty$ to those of an eigenvalue problem on the subregion $\Omega = \{c = 0\}$ with weight $b$ and mixed boundary conditions. For example, the positive eigenvalues of the Neumann problem
\begin{equation}
\label{Neumann}
\begin{cases} 
      -\Delta u = \lambda (b - tc) u & \text{ on a domain } \OO \\
      \hspace{2.5mm} \partial_n u = 0 & \text{ on } \partial \OO
   \end{cases}
\end{equation}
converge, as $t \to +\infty$, to the positive eigenvalues of the mixed boundary value problem
\begin{equation}
\label{eq:NeumannMixed}
\begin{cases} 
      -\Delta u = \lambda b u & \text{ on the subregion } \Omega = \{c = 0\} \\
      \hspace{2.5mm} \partial_n u = 0 & \text{ on } \partial \Omega \setminus \Gamma\\
      \hspace{7mm} u = 0 & \text{ on } \Gamma,
   \end{cases}
\end{equation}
where $\Gamma$ is the hypersurface that forms the interface between the set $\{c>0\}$ and its complement $\{c = 0\}$ (see Figure \ref{fig:Domain}). The key to establishing the convergence is an ``$L^2$-draining" inequality, as explained in Remark \ref{rem:CDrain}. This implies the eigenfunctions converge weakly to zero in $L^2(\Omega', c \, dx)$ as $t \to \infty$, and therefore, the limiting eigenfunction is supported in $\Omega$ (see Figure \ref{fig:NeumannEF}).

We are primarily concerned with singular limits of sign-changing eigenproblems. We formulate and prove our results in a general Hilbert space framework, as developed by Auchmuty \cite{Auch}; see also \cite{AuchRiv}. (A formulation in terms of compact operators may also be possible.) In Section \ref{sec:PDEApp} we apply our singular limit convergence theorems to various PDE eigenvalue problems (summarized in Table \ref{tab:ProbTab}). Among others, our results cover some fourth order equations involving the Bi-Laplacian and problems with a variety of boundary conditions, and a problem where the eigenvalue appears both in the equation and the boundary condition. 

In particular, our results apply to problems that are not coercive. For example, the left side of the above Neumann problem (\ref{Neumann}) is generated by the $L^2$-norm of the gradient, which is not coercive on $H^1$. In positively weighted problems, there are various ways around this lack of coercivity, but many of these techniques fail or are more complicated when the weight changes sign. 

In Section \ref{sec:MatPen} we apply our results to a weighted version of the traditional matrix eigenvalue problem, known as a \emph{matrix pencil}: $A v = \lambda B v$. In this setting we are able to give a complete description of the behavior of the spectrum as $t \to \infty$. Sections \ref{sec:Lemmas} and beyond are devoted to proofs of the main results and applications.

\begin{figure}

\setlength{\unitlength}{0.8cm}
\begin{picture}(9,4)
\thicklines
\qbezier(3.5,2.5)(5,1)(3,.05)
\put(.7,2.3){$\OO$}
\put(2,1.2){$\Omega$}
\put(5.8,1.2){$\Omega'$}
\put(4.3,.4){$\Gamma$}

\begin{tikzpicture}
\draw (2,2) ellipse (3cm and 1cm);
\end{tikzpicture}

\end{picture}

\caption{The domain $\OO$, with subregions $\Omega = \{c = 0\}$ and $\Omega' = \{c>0\}$, and interface $\Gamma$. See Section \ref{AppPrec} for precise definitions. }
\label{fig:Domain}
\end{figure}
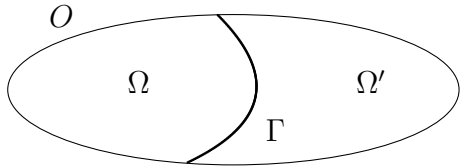

\subsection*{Motivation}

Positive eigenfunctions of the weighted Laplacian can be interpreted as a population densities, because the eigenproblem is the linearization of the steady-state of a nonlinear model for population dynamics (see the introduction of \cite{Bel}). From this perspective, our limit of eigenproblems can be interpreted as the limit of ecological models in which the food source (the weight) becomes arbitrarily unfavorable (negative) on some subregion. In ecology, Dirichlet boundary conditions are known as ``hostile" boundary conditions. Our results make rigorous the following heuristic: a region with arbitrarily unfavorable food source creates a hostile boundary at its interface with the complementary region. This heuristic is analogous to how a Dirichlet boundary condition for a Schr\"{o}dinger eigenproblem can arise from deeper and deeper potential wells converging to an infinite potential well.

\subsection*{Literature}
Recently, a special case of the aformentioned convergence as $t \to \infty$ for the Neumann problem (\ref{Neumann}) was established by Mazzoleni, Pellacci, and Verzini to study optimal design problems \cite[Lemma 1.2]{MPV1}, \cite{MPV2}. This convergence allowed them to transfer information about optimizers from the mixed Dirichlet--Neumann problem (\ref{eq:NeumannMixed}) to the Neumann problem (\ref{Neumann}) for large $t$. In addition to this optimal design result, there has been work on extremizing the first positive and negative eigenvalues over weights with constraints on the extreme and average values. This problem was investigated by Cox \cite{Cox} for the Dirichlet Laplacian with positive weights, for the Neumann Laplacian by Lou and Yanagida \cite{LY}, for a nonlinear Neumann Laplacian problem by Derlet, Gossez, and Tak\'a\v c \cite{DGT}, and for the Robin Laplacian by Lamboley et al.\ \cite{LLNP}. The resulting extremizers are often of \emph{bang-bang} type, meaning their range consists only of the extreme values.  

Recently, other phenomena have been studied for problems with sign-changing weights. The analog of the Weyl asymptotic holds for the eigenvalues of the Laplace-Beltrami operator with a sign-changing weight was established by Bandara, Nursultanov, and Rowlett in \cite{BNR}. Their results hold for \emph{rough Riemannian manifolds}, that is, Riemannian manifolds with metrics that are only assumed to be bounded and measurable. In a nonlinear setting, Kaufmann, Rossi, and Terra \cite{pLap} studied limits of the $p$-Laplacian eigenvalue problem with a sign-changing weight as $p$ tends to infinity. They showed that the asymptotics of the positive eigenvalues are controlled by the geometry of the set where the weight is positive, which generalized results from the unweighted setting.

\begin{figure}
    \centering
    \includegraphics[scale =.5]{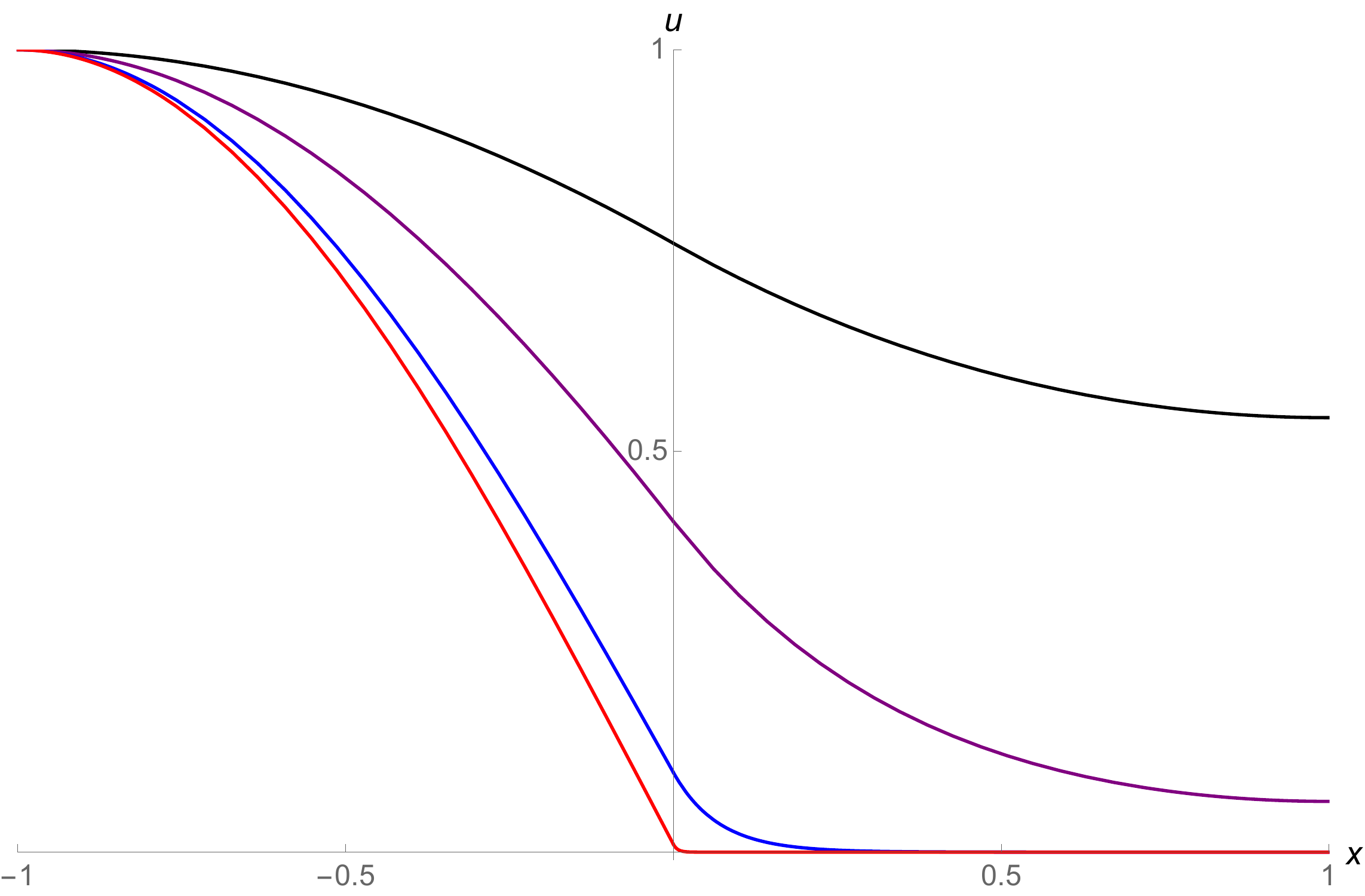}
    \caption{The $L^{\infty}$-normalized eigenfunction corresponding to the first positive eigenvalue of the Neumann problem (\ref{Neumann}) with domain $\Omega = (-1,1)$ and weight $b^t = \chi_{[-1,0]} - t\chi_{[0,1]}$, plotted for $t = 1.5,5,100,10^5$. The values $u(0)$ are decreasing with increasing $t$-values. The eigenfunctions have the form $A \sin(\sqrt{\lambda}x) + B \cos(\sqrt{\lambda}x)$ on $(-1,0]$ and $C\sinh(\sqrt{\lambda t}x) + D\cosh(\sqrt{\lambda t}x)$ on $[0,1)$. Observe that $u$ approaches zero on $\Omega' = (0,1)$ so that the eigenfunctions acquire Dirichlet boundary conditions on $\Gamma = \{0\}$ in the limit.}
    \label{fig:NeumannEF}
\end{figure}

\section{\bf{Main Results}}
\label{sec:MR}

\subsection*{Conditions for Convergence of Spectrum}

The set-up for our main results consists of a Hilbert space $(\HH, \langle \cdot,\cdot \rangle)$ with norm $\lVert \cdot \rVert$, three symmetric bilinear forms $\a,\b,\c : \HH \times \HH \to \R$, and the family of bilinear forms given by
\[\b^t = \b - t \c, \quad \text{for each} ~ t \in \R.\]
The associated quadratic forms are  
\[\A(u) = \a(u,u), \quad \B(u) = \b(u,u), \quad \C(u) = \c(u,u), \quad \text{and} \quad \B^t(u) = \b^t(u,u).\]

In what follows we allow $\B$ to be a sign-changing function, but impose that 
\[\mathbf{C}(\cdot) ~\text{is nonnegative, and not identically zero on} ~ \HH.\] 
We seek solutions $(\lambda^t,u^t) \in \R \times (\HH \setminus \{0\})$ to the eigenequation  
\begin{equation}
\label{eq:GenEigEq}
\a(u^t,v) = \lambda^t \b^t(u^t,v), \quad \text{for all}~ v \in \HH.
\end{equation}
Call $\lambda^t$ the eigenvalue and $u^t$ the eigenvector.

We denote this problem by the triple $(\HH,\a,\b^t)$ and will define other eigenvalue problems using the same ``\emph{space-form-form}" triple notation. For example, the weak formulation of the weighted Dirichlet or Neumann Laplacian eigenvalue problem is generated by taking $\HH = H^1_0(\OO)$ or $H^1(\OO)$, $\a(u,v) = \int_{\OO} \nabla u \cdot \nabla v \, dx$, and $\b^t(u,v) = \int_{\OO} uvb^t \, dx$ with $b^t = b - tc$ as in (\ref{Neumann}). In what follows we define the kernel of a bilinear form $\b$ to be
\[\Ker(\b) = \{u \in \HH : \b(u,v) = 0 ~ \text{for all} ~ v \in \HH\}.\]

To identify the appropriate limiting problem as $t \to \infty$ let 
\begin{equation*}
    \K = \Ker(\mathbf{c}).
\end{equation*}
Observe that $\b^t$ is ``stationary" on $\K$ in the sense that $\b^t = \b$ on $\K \times \HH$ for all $t$. We show that, in a certain sense, the eigenvalues of the problem $(\HH,\a,\b^t)$ converge to those of the limiting problem $(\K,\a,\b)$ as $t \rightarrow \infty$.

Two sets of conditions will give rise to two versions of this limiting statement. The Dirichlet and Neumann Laplacian eigenvalue problems are model problems for these sets of conditions, respectively. The first set of conditions is: 

\bigskip

\textbf{(C1):} $\A(\cdot)$ is a coercive on $\HH$, meaning $\gamma \lVert \cdot \rVert^2 \leq \A(\cdot)$ for some $\gamma > 0$.

\textbf{(C2):} $\a(\cdot,\cdot)$ is continuous on $\HH \times \HH$.

\textbf{(C3):} $\b(\cdot, \cdot)$ and $\c(\cdot,\cdot)$ are weakly  (sequentially) continuous on $\HH \times \HH$.

\bigskip

\noindent In condition (C3), a bilinear form $\b(\cdot,\cdot)$ is weakly sequentially continuous if $\b(u^n,v^n) \to \b(u,v)$ whenever $u^n \rightharpoonup u$ and $v^n \rightharpoonup v$, where ``$\rightharpoonup$" denotes weak convergence in $\HH$. In what follows we will say ``weakly continuous" in place of ``weakly sequentially continuous" for brevity.

\begin{rem}
\label{rem:WeakCont}
Note that $\b(\cdot,\cdot)$ and $\c(\cdot,\cdot)$ being \emph{weakly continuous} on $\HH \times \HH$ is, in fact, a stronger condition than them simply being continuous. This follows from the fact that if $u^n$ and $v^n$ are norm convergent in the Hilbert space $\HH$ with limits $u$ and $v$, then $u^n \rightharpoonup u$ and $v^n \rightharpoonup v$, so that $\b(u^n,v^n) \rightarrow \b(u,v)$.
\end{rem}

Condition (C1) is designed to handle problems that are coercive on all of $\HH$. To handle problems that fail to be coercive on a finite dimensional subspace (such as the Neumann Laplacian, whose associated bilinear form annihilates the constants), we now develop a variant of (C1). The condition is:

\bigskip

\textbf{(C1$'$):} $\A(\cdot)$ is coercive on $\Ker(\a)^{\perp}$, with $\dim(\Ker(\a)) < \infty$, and $\Ker(\a) \cap \K = \{0\}$,

\bigskip
\noindent where $\Ker(\a) = \{w \in \HH : \a(w,v) = 0 ~ \text{for all} ~ v \in \HH \}$ and $\perp$ denotes the orthogonal complement with respect to the $\HH$-inner product.

The second set of conditions is then (C1$'$) together with (C2) and (C3) from above, and in this case, we restrict the problem $(\HH,\a,\b^t)$ to the ``moving" Hilbert space
\[\HH^t = \{u \in \HH : \b^t(u,w) = 0 \text{ for all } w \in \Ker(\a)\}.\]
The spaces $\HH^t$ should be thought of as the $\b^t$-orthogonal complement of $\Ker(\a)$ for each $t \in \R$. In this ``moving" setting we prove that the eigenvalues of $(\HH^t,\a,\b^t)$, and therefore the nonzero eigenvalues of $(\HH,\a,\b^t)$, converge to those of the limiting problem $(\K,\a,\b)$.

In our PDE applications, $\Ker(\a)$ is some finite dimensional subspace of the polynomials, and the coercivity condition in (C1$'$) is  established by a generalization of the Poincar\'e inequality for mean zero functions. For example, in the Neumann Laplacian case, $\Ker(\a)$ consists of the constants, and the ``moving" Hilbert space $\HH^t$ consists of functions $u$ satisfying $\int_{\OO} u b^t \, dx = 0$.

The following terminology will distinguish the above two cases:

\begin{defi}
We call conditions (C1)--(C3) the \textbf{fixed Hilbert space conditions} and  conditions (C1$'$),(C2),(C3) the \textbf{moving Hilbert space conditions}.
\end{defi}

\subsection*{Existence of Spectrum}
When (C1) and (C2) hold, the bilinear form $\a(\cdot,\cdot)$ induces an inner product on $\HH$ that is equivalent to the $\HH$-inner product. Let $\oplus_{\a}$ denote the $\a$-orthogonal direct sum. The following theorem is a consequence of existence results due to Auchmuty \cite{Auch}:

\begin{theorem}[Existence of spectrum]
\label{thm:ExistSpec}
Assume that $\b(\cdot,\cdot) \not\equiv 0$. If $(\HH,\a,\b)$ satisfies (C1),(C2), and (C3) then there exists a (possibly finite) sequence of nonzero eigenvalues
\[ \dots \leq \lambda_{-j} \leq \dots \leq \lambda_{-2} \leq \lambda_{-1} < 0 < \lambda_1 \leq \lambda_2 \leq \dots \leq \lambda_j \leq \cdots,\]
which have finite multiplicity and accumulate only at $\pm \infty$. Moreover, we have the decomposition
\[\HH = \U_- \oplus_{\a} \U_+ \oplus_{\a} \U_{\infty},\]
where $\U_{\pm}$ is the norm closed span of the eigenvectors with positive $(+)$ or negative $(-)$ eigenvalues and $\U_{\infty} = \Ker(\b)$.
\end{theorem}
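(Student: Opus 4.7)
The plan is to convert the eigenproblem into an operator equation and apply the spectral theorem for compact self-adjoint operators. Under (C1) and (C2), the form $\a(\cdot,\cdot)$ is an equivalent inner product on $\HH$, so for each fixed $u \in \HH$ the functional $v \mapsto \b(u,v)$ is $\a$-continuous (weak continuity of $\b$ implies ordinary continuity, as in Remark~\ref{rem:WeakCont}), and the Riesz representation theorem in $(\HH, \a)$ yields a unique bounded linear operator $T : \HH \to \HH$ with
\[
\a(Tu, v) = \b(u,v) \quad \text{for all } u,v \in \HH.
\]
Symmetry of $\b$ forces $T$ to be $\a$-self-adjoint, and $\Ker(T) = \Ker(\b)$ by unfolding the definition. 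Moreover, $\lambda \neq 0$ is an eigenvalue of $(\HH,\a,\b)$ with eigenvector $u$ if and only if $Tu = \lambda^{-1} u$.

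The key step is to show $T$ is compact on $(\HH, \a)$. Suppose $u_n \rightharpoonup u$ weakly in $\HH$; boundedness of $T$ gives $Tu_n \rightharpoonup Tu$ weakly, so $Tu_n - Tu \rightharpoonup 0$. Using $\a$-self-adjointness of $T$ and the defining identity,
\[
\lVert Tu_n - Tu \rVert_{\a}^2 = \a(Tu_n - Tu,\, Tu_n) - \a(Tu_n - Tu,\, Tu) = \b(u_n, Tu_n - Tu) - \a(Tu, Tu_n - Tu).
\]
The second term tends to zero by weak convergence $Tu_n \rightharpoonup Tu$, and the first tends to zero by hypothesis (C3), since $u_n \rightharpoonup u$ and $Tu_n - Tu \rightharpoonup 0$ together force $\b(u_n, Tu_n - Tu) \to \b(u, 0) = 0$. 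Hence $Tu_n \to Tu$ in $\a$-norm, proving $T$ is compact.

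The spectral theorem for compact self-adjoint operators on the Hilbert space $(\HH, \a)$ now yields a (possibly finite) sequence of nonzero real eigenvalues $\mu_k$ of finite multiplicity, accumulating only at $0$, together with the orthogonal decomposition $\HH = \Ker(T) \oplus_{\a} \overline{\mathrm{span}}^{\,\a}\{\text{eigenvectors of } T\}$. Separating the positive and negative $\mu_k$'s, setting $\lambda_k = 1/\mu_k$, and noting that $\mu_k \to 0$ corresponds to $\lambda_k \to \pm \infty$, produces the two-sided sequence in the statement. The closed $\a$-spans of the positive and negative eigenvectors are precisely $\U_+$ and $\U_-$, and $\Ker(T) = \Ker(\b) = \U_{\infty}$, giving the decomposition. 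The hypothesis $\b \not\equiv 0$ ensures $T \not\equiv 0$ so that at least one nonzero $\mu_k$ exists.

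The main potential obstacle is the compactness of $T$, but the stronger form of weak sequential continuity packaged into (C3) is tailored precisely to handle it via the calculation above. Everything else is a direct appeal to the compact self-adjoint spectral theorem; this is also the route taken by Auchmuty in \cite{Auch}, which the statement cites, so the proof can be completed by referencing that framework once the operator $T$ is set up.
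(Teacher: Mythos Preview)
Your argument is correct: the Riesz operator $T$ is compact and self-adjoint on $(\HH,\a)$, and the spectral theorem delivers both the eigenvalue sequence and the decomposition $\HH = \Ker(T)\oplus_{\a}\U_+\oplus_{\a}\U_-$ in one stroke. The compactness calculation is clean and uses (C3) exactly as intended.

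However, your route differs from the paper's. The paper does \emph{not} pass through a compact operator; it cites Auchmuty's purely variational construction (Theorem~\ref{thm:AuchExist}), where each eigenvalue is obtained as the reciprocal of $\beta_j=\sup\{\B(u):u\in\U_{j-1}^{\perp_\a},\ \A(u)=1\}$, and then proves the decomposition by hand: one checks $\Ker(\b)=(\U_+\oplus_{\a}\U_-)^{\perp_\a}$ using the eigenequation for the forward inclusion and, for the reverse, the fact that $\B\leq 0$ on $\U_+^{\perp_\a}$ and $\B\geq 0$ on $\U_-^{\perp_\a}$ together with a polarization trick. Your closing remark that ``this is also the route taken by Auchmuty'' is therefore inaccurate; indeed the paper explicitly notes in the introduction that ``a formulation in terms of compact operators may also be possible,'' signalling that Auchmuty's framework is variational rather than operator-theoretic.

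What each buys: your approach is shorter and gets the orthogonal decomposition for free from the spectral theorem. The paper's variational route, though longer here, produces the min--max characterizations (Theorems~\ref{VC} and~\ref{MVC}) as a by-product, and these are the essential tools for the monotonicity and convergence arguments that drive the rest of the paper.
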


\subsection*{Application to \texorpdfstring{$\b^t$}{Lg}}

When $\b^t(\cdot,\cdot) \not\equiv 0$ for all $t$, applying Theorem \ref{thm:ExistSpec} with $\b^t$ instead of $\b$ immediately gives existence of spectrum for $(\HH,\a,\b^t)$ in the fixed Hilbert space case and with $\HH^t$ and $\b^t$ instead of $\HH$ and $\b$ gives existence of spectrum for $(\HH^t,\a,\b^t)$ in the moving Hilbert space case (since $\A$ is coercive on $\HH^t$ by Lemma \ref{MCoercivity} when $|t|$ is large). In the latter case, Lemma \ref{lem:MEE} shows that $(\HH,\a,\b^t)$ has the same spectrum as $(\HH^t,\a,\b^t)$ up to zero eigenvalues when $|t|$ is large. See point 1 of the discussion at the end of this section for further explanation of the hypothesis on $t$ in the moving case. 

Note that $\b^t(\cdot,\cdot) \equiv 0$ for at most one $t \in \R$, say $t_0$, since $\c(\cdot,\cdot) \not\equiv 0$. In this case, $\b^t(\cdot,\cdot) = (t_0 - t)\c(\cdot,\cdot)$ and each eigenvalue of $(\HH,\a,\b^t)$ is simply a $(t_0-t)^{-1}$ times an eigenvalue of $(\HH,\a,\c)$ when $t \neq t_0$.

In what follows let $J_+,J_- \in \{0,1,2, \dots, \infty\}$ denote, respectively, the number of positive and negative eigenvalues of the limiting problem $(\K,\a,\b)$. For each $t \in \R$, write
\[\lambda_{\pm j}^t = \lambda_{\pm j}(\HH,\a,\b^t), \quad \text{for each} ~ j \in \N,\] 
for the $j^{\text{th}}$ positive ($+$) or negative ($-$) eigenvalue of $(\HH,\a,\b^t)$ counting multiplicity. These eigenvalues satisfy the eigenequation (\ref{eq:GenEigEq}) for some corresponding eigenvectors $u^t_{\pm j}$.

\subsection*{Convergence of Spectrum}

The first of our main results accounts for all the positive eigenvalues as $t \to \infty$ of $(\HH,\a,\b^t)$ via the dichotomy: if $\lambda_j(\K,\a,\b)$ exists then $\lambda_j^t$ converges to it; otherwise $\lambda_j^t$ tends to $+\infty$. In what follows, we will write $\lambda^t \nearrow \lambda$ as $t \to \infty$ to mean: $\lambda^t \to \lambda$ as $t \to \infty$, and either $\lambda^t$ is increasing for all $t \in \R$ (in the fixed Hilbert space setting) or increasing for all $t$ sufficiently large (in the moving Hilbert space case). We will use ``$\nearrow$" similarly when $t$ increases to a finite value. The notation ``$\searrow$" is also defined analogously. How large $t$ must be only depends on $\HH,\a,\b,$ and $\c$ (see Lemma \ref{PseudoMon} and Remark \ref{rem:T}).

\begin{theorem}[Convergence and blow-up of positive spectrum as $t \to +\infty$] \label{SpecConv} Assume either the fixed or moving Hilbert space conditions hold. \\
(i) If $j \leq J_+$ then $\lambda_j^t \nearrow \lambda_j(\K,\a,\b)$ as $t \nearrow \infty$. \\
(ii) If $j > J_+$ and $\lambda_j^{t_*}$ exists for some $t_* \in \R$ ($t_*$ sufficiently large in the moving Hilbert space case) then $\lambda_j^t \nearrow +\infty$ as $t \nearrow t_j$ for some $t_j \in (t_*, +\infty]$.
\end{theorem}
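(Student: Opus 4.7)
The plan is to combine the Courant--Fischer characterization of the positive spectrum with a weak-compactness argument whose key step is an ``$L^2$-draining'' identity. For any triple satisfying (C1)--(C3), the positive eigenvalues of $(\HH,\a,\b^t)$ are given by
\[
\frac{1}{\lambda_j^t} \;=\; \max_{\substack{V \subset \HH \\ \dim V = j}} \min_{\substack{u \in V \\ \A(u)=1}} \B^t(u)
\]
whenever $\lambda_j^t$ exists. Because $\C(\cdot)\ge 0$, the inner expression $\B^t(u) = \B(u)-t\C(u)$ is non-increasing in $t$ at each fixed $u$, so $\lambda_j^t$ is non-decreasing in $t$, giving the ``$\nearrow$''. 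Using the span of the first $j$ positive eigenvectors of $(\K,\a,\b)$ as a test subspace (on which $\B^t\equiv\B$) gives the upper bound $\lambda_j^t\le\lambda_j(\K,\a,\b)$ whenever the right side exists, so the monotone limit $\lambda_j^\infty:=\lim_{t\to\infty}\lambda_j^t$ exists and lies in $[0,\lambda_j(\K,\a,\b)]$ in case (i).

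For the matching lower bound, I choose, for each large $t$, the first $j$ positive eigenvectors $u_1^t,\ldots,u_j^t$ normalized to be $\a$-orthonormal, so $\B^t(u_i^t)=1/\lambda_i^t$. Coercivity of $\A$ (via (C1) in the fixed case, or via (C1$'$) and Lemma~\ref{MCoercivity} in the moving case) bounds each sequence in $\HH$; extract weak limits $u_i^t\rightharpoonup u_i^*$. The draining identity
\[
t\,\C(u_i^t) \;=\; \B(u_i^t) - \frac{1}{\lambda_i^t}
\]
has a finite right-hand side limit by the weak continuity (C3) of $\b$, so $t\to\infty$ forces $\C(u_i^t)\to 0$, and weak continuity of $\c$ then gives $\C(u_i^*)=0$, i.e.\ $u_i^*\in\K$. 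Since $\c(u_i^t,v)=0$ whenever $v\in\K$, the eigenequation restricts on $\K$ to $\a(u_i^t,v)=\lambda_i^t\,\b(u_i^t,v)$, and passing to the limit (using (C2) for $\a$ and (C3) for $\b$) yields $\a(u_i^*,v)=\lambda_i^*\,\b(u_i^*,v)$ for all $v\in\K$, where $\lambda_i^*:=\lim\lambda_i^t$.

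The main obstacle is upgrading the weak convergence to strong, which is needed to preserve linear independence of the limiting eigenvectors. Testing the limit equation against $v=u_i^*$ gives $\A(u_i^*)=\lambda_i^*\B(u_i^*)$. From $t\C(u_i^t)\ge 0$ we have $\B(u_i^t)\ge 1/\lambda_i^t$; weak continuity passes this to $\B(u_i^*)\ge 1/\lambda_i^*$, so $\A(u_i^*)\ge 1$. Weak lower semicontinuity of the coercive continuous quadratic form $\A$ also gives $\A(u_i^*)\le\liminf\A(u_i^t)=1$, hence $\A(u_i^*)=1=\lim\A(u_i^t)$. Norm convergence together with weak convergence in the Hilbert space $\HH$ implies strong convergence, which preserves $\a$-orthonormality: $\a(u_i^*,u_k^*)=\delta_{ik}$. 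Thus $\Span\{u_i^*\}_{i=1}^j\subset\K$ is $j$-dimensional, and the Courant--Fischer formula applied to $(\K,\a,\b)$ on this subspace yields $\lambda_j(\K,\a,\b)\le\lambda_j^\infty$. Combined with the upper bound, this proves $\lambda_j^t\nearrow\lambda_j(\K,\a,\b)$.

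For part (ii), monotonicity ensures $\lambda_j^t$ has a (possibly infinite) limit as $t$ approaches the supremum $t_j\in(t_*,\infty]$ of the set on which $\lambda_j^t$ exists. If this limit were finite, the extraction argument above would deliver $j$ linearly independent positive eigenvectors of $(\K,\a,\b)$, contradicting $j>J_+$. Hence $\lambda_j^t\nearrow+\infty$ as $t\nearrow t_j$. The moving Hilbert space case is handled by running the same argument inside $\HH^t$ and invoking Lemma~\ref{lem:MEE} to transfer spectral information between $(\HH,\a,\b^t)$ and $(\HH^t,\a,\b^t)$.
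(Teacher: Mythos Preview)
Your argument for part (i) is correct and in fact takes a slightly more direct route than the paper. The paper proceeds by induction on $j$: it shows each $\lambda_i^\infty$ is \emph{some} eigenvalue of $(\K,\a,\b)$ via Lemma~\ref{SpecCont}, then uses $\a$-orthogonality of the limiting eigenvectors to the previously-identified eigenspaces to rule out $\lambda_{j+1}^\infty = \lambda_j(\K)$. You instead feed the whole $j$-dimensional limiting subspace $\Span\{u_1^*,\dots,u_j^*\}$ as a trial into the Courant--Fischer formula for $(\K,\a,\b)$, obtaining $\lambda_j(\K)\le\lambda_j^\infty$ in one stroke. Both approaches rest on the same core ingredients (monotonicity, the draining identity, and the upgrade from weak to $\a$-norm convergence), but yours avoids the inductive bookkeeping. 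One small imprecision: in the moving case your ``strong convergence'' is only in the $\sqrt{\A}$-seminorm, not the $\HH$-norm, but this is all that is needed to preserve $\a$-orthonormality (cf.\ the paper's $\a$-Lemma~\ref{lem:ALem}(iv)).

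Part (ii), however, has a genuine gap. You define $t_j$ as the supremum of the $t$-values for which $\lambda_j^t$ exists and then claim that if the limit $\lim_{t\nearrow t_j}\lambda_j^t$ were finite, ``the extraction argument above would deliver $j$ linearly independent positive eigenvectors of $(\K,\a,\b)$.'' That extraction relies on the draining identity $t\,\C(u_i^t)=\B(u_i^t)-1/\lambda_i^t$ to force $\C(u_i^*)=0$, which works only when $t\to\infty$. If $t_j<\infty$, the factor $t$ stays bounded and you cannot conclude $u_i^*\in\K$; the limiting eigenvectors live only in $\HH$ and satisfy the eigenequation for $(\HH,\a,\b^{t_j})$, so no contradiction with $j>J_+$ arises. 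The paper treats this case separately (its ``Case~2''): a finite limit would produce an eigenpair of $(\HH,\a,\b^{t_j})$, hence $\lambda_j^{t_j}$ exists, and then the Stability Proposition~\ref{prop:StabSpec} (openness of $\{t:\lambda_j^t\ \text{exists}\}$) contradicts the definition of $t_j$ as a supremum. You need to supply this step.
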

\noindent Observe that when $J_+ = \infty$, $j \leq J_+$ means $j < \infty$ and part (ii) is vacuous. For matrix pencils, the blow-up of eigenvalues in part (ii) of the theorem and related phenomenon can be seen in Figure \ref{fig:MatPen} and are proven in Theorem \ref{prop:MatPen}.

In the moving Hilbert space setting define 
\[\HH^{\infty} = \liminf_{t \to \infty} \HH^t = \cup_{s} \cap_{t > s} \HH^t,\]
and note that $\HH^{\infty}$ is a closed subspace of $\HH$ by Lemma \ref{subspace}, where we also find an explicit characterization of $\HH^{\infty}$. In the next proposition, let $\codim(\K)$ denote the codimension of $\K$ as a subspace of $\HH$, and $\codim_{\HH^{\infty}}(\K \cap \HH^{\infty})$ denote the codimension of $\K \cap \HH^{\infty}$ as a subspace of $\HH^{\infty}$.

\begin{prop}[Convergence of negative spectrum as $t \to +\infty$]
\label{NegConv}
If the fixed Hilbert space conditions hold then $\lambda_{-j}(\HH,\a,\b^t)$ exists for all sufficiently large $t$ and increases to zero as $t \rightarrow \infty$, for each $j =1,2,\dots, \codim(\K)$. If the moving Hilbert space conditions hold then $\lambda_{-j}(\HH,\a,\b^t)$ exists for all sufficiently large $t$ and tends to zero as $t \rightarrow \infty$, for each $j =1,2, \dots, \codim_{\HH^{\infty}}(\K \cap \HH^{\infty})$.
\end{prop}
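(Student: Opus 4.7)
The plan is to combine the standard min-max characterization of negative eigenvalues with a direct test-subspace computation. The existence framework of Theorem \ref{thm:ExistSpec}, applied to $(\HH, \a(\cdot,\cdot))$ (an equivalent Hilbert space structure by (C1)), realizes the eigenproblem via a compact self-adjoint operator whose nonzero eigenvalues are the $1/\lambda_k^t$. The spectral theorem for such an operator gives
$$\frac{1}{\lambda_{-j}(\HH,\a,\b^t)} \;=\; \min_{\substack{V \subset \HH \\ \dim V = j}}\; \max_{u \in V \setminus \{0\}}\; \frac{\b^t(u,u)}{\a(u,u)}$$
whenever the $j$-th negative eigenvalue exists; crucially, if the right-hand side is strictly negative then it simultaneously certifies existence of $\lambda_{-j}^t$ and equals its reciprocal.

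For the fixed Hilbert space case, I would first exhibit a ``far-from-$\K$'' test subspace. For any $j \leq \codim(\K)$, pick a $j$-dimensional subspace $V \subset \HH$ with $V \cap \K = \{0\}$. The $\a$-unit sphere $S = \{u \in V : \a(u,u) = 1\}$ is compact by finite-dimensionality, and the continuous function $u \mapsto \c(u,u)$ is strictly positive on $S$ (because $V \cap \K = \{0\}$ and $\c$ is positive semidefinite with kernel $\K$), so it attains a minimum $\delta > 0$. Likewise $u \mapsto \b(u,u)$ is bounded above on $S$ by some $M$. Hence
$$\max_{u \in V \setminus \{0\}} \frac{\b^t(u,u)}{\a(u,u)} \;=\; \max_{u \in S}\,\bigl[\b(u,u) - t\,\c(u,u)\bigr] \;\leq\; M - t\delta \;\longrightarrow\; -\infty$$
as $t \to \infty$. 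By the min-max formula, $\lambda_{-j}^t$ exists for all sufficiently large $t$ and satisfies $1/\lambda_{-j}^t \leq M - t\delta$, hence $\lambda_{-j}^t \to 0^-$. Monotonicity comes for free: since $\C \geq 0$, the map $t \mapsto \b^t(u,u)$ is non-increasing for each fixed $u$, so the min-max is non-increasing in $t$, whence $\lambda_{-j}^t$ is non-decreasing as claimed.

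For the moving Hilbert space case, I would run the identical argument on $(\HH^t,\a,\b^t)$, whose negative spectrum coincides with that of $(\HH,\a,\b^t)$ for large $t$ by Lemma \ref{lem:MEE}. The sole adjustment is that the test subspace $V$ must lie in $\HH^t$ for all large $t$: I would take $V \subset \HH^{\infty}$ with $V \cap \K = \{0\}$ and $\dim V = j \leq \codim_{\HH^{\infty}}(\K \cap \HH^{\infty})$, and invoke the definition $\HH^{\infty} = \cup_s \cap_{t > s} \HH^t$ to conclude $V \subset \HH^t$ for all $t$ sufficiently large, so that the previous computation applies inside $\HH^t$. Monotonicity is lost because the class of admissible subspaces shifts with $t$, which is why the statement only asserts convergence to zero in this regime.

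The main obstacle I anticipate is justifying the min-max formula and its implication for existence in the possibly non-coercive moving setting; this likely requires the explicit description of $\HH^{\infty}$ promised by Lemma \ref{subspace} to guarantee that a test subspace of the prescribed dimension, transverse to $\K$, actually exists inside $\HH^{\infty}$, and some care when transferring negative-spectrum information between $(\HH,\a,\b^t)$ and $(\HH^t,\a,\b^t)$.
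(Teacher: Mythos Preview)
Your proposal is correct and follows essentially the same approach as the paper: choose a finite-dimensional test subspace $V$ transverse to $\K$ (and inside $\HH^{\infty}$ in the moving case), use compactness of the $\a$-unit sphere of $V$ to bound $\B$ above and $\C$ below by a positive constant, and plug into the variational characterization to force $1/\lambda_{-j}^t \to -\infty$. The only cosmetic difference is that the paper works directly with Theorem~\ref{VC} applied to $(\HH,\a,-\b^t)$ (whose converse clause already certifies existence once the sup-inf is positive), so your detour through a compact self-adjoint operator realization is unnecessary; likewise, the paper uses the explicit description of $\HH^{\infty}$ from Lemma~\ref{subspace} to see $\HH^{\infty} \subset \HH^t$ for all $t$, which is slightly cleaner than your basis-element argument via the $\liminf$ definition.
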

\noindent Recall that a Riesz representation argument shows that $\c(u,v) = \langle u,C v \rangle$ for a bounded symmetric operator $C : \HH \to \HH$ since $\c(\cdot,\cdot)$ is norm continuous by Remark \ref{rem:WeakCont}. Combining this with the above proposition shows that $\rank(C)$-many negative eigenvalues of $(\HH,\a,\b^t)$ increase to zero in the fixed Hilbert space case.

Applying the above two results as $t \searrow -\infty$, we have the following convergence statements: 

\begin{cor}[Convergence of spectrum as $t \searrow -\infty$]
\label{cor:NegInfLim}
Assume either the fixed or moving Hilbert space conditions hold with $t$ sufficiently negative in the moving case.
\begin{enumerate}
\item[(i)] If $j \leq J_-$ then $\lambda_{-j}^t \searrow \lambda_{-j}(\K,\a,\b)$ as $t \searrow -\infty$.
\item[(ii)] If $j > J_-$ and $\lambda_{-j}^{t_*}$ exists for some $t_* \in \R$ ($t_*$ sufficiently negative in the moving Hilbert space case) then $\lambda_{-j}^t \searrow -\infty$ as $t \searrow t_{-j}$ for some number $t_{-j} \in [-\infty, t_*)$.
\item[(iii)] $\lambda_j^t$ exists for sufficiently negative $t$ and tends to zero as $t \searrow -\infty$, for $j = 1, \dots, \codim(\K)$ in the fixed Hilbert space case and for $j = 1, \dots, \codim_{\HH^{\infty}}(\K \cap \HH^{\infty})$ in the moving Hilbert space case.
\end{enumerate}

\end{cor}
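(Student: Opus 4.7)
The plan is to reduce the corollary to the $t \to +\infty$ case already handled by Theorem \ref{SpecConv} and Proposition \ref{NegConv} via the involution $(\b, t) \mapsto (-\b, -t)$. Set $\tilde{\b} = -\b$ and $s = -t$; then
\[\tilde{\b}^s := \tilde{\b} - s\c = -\b + t\c = -\b^t,\]
so the eigenequation $\a(u, v) = \lambda \b^t(u, v)$ is equivalent to $\a(u, v) = (-\lambda) \tilde{\b}^s(u, v)$. Consequently the spectra of $(\HH, \a, \b^t)$ and $(\HH, \a, \tilde{\b}^s)$ are in bijection via $\lambda \mapsto -\lambda$, with the positive and negative sequences interchanged and the indices preserved (including multiplicities):
\[\lambda_{\pm j}(\HH, \a, \b^t) = -\lambda_{\mp j}(\HH, \a, \tilde{\b}^s), \qquad j \in \N,\]
and similarly $\lambda_{\pm j}(\K, \a, \b) = -\lambda_{\mp j}(\K, \a, \tilde{\b})$. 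In particular $(\K, \a, \tilde{\b})$ has exactly $J_-$ positive and $J_+$ negative eigenvalues.

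Next I would check that all hypotheses transfer to the tilde problem. Conditions (C1), (C1$'$), and (C2) involve only $\a$ and $\HH$, so they are unaffected by the substitution. Condition (C3) is preserved because weak sequential continuity of $\b$ is equivalent to that of $-\b$. The form $\c$ is unchanged, so $\K = \Ker(\c)$ and the nonnegativity and nontriviality of $\c$ are the same for both problems. In the moving Hilbert space case, the defining condition $\b^t(u, w) = 0$ for $w \in \Ker(\a)$ is equivalent to $\tilde{\b}^s(u, w) = 0$, so $\HH^t = \tilde{\HH}^s$ and hence $\HH^{\infty}$ coincides in both setups, along with $\K \cap \HH^{\infty}$ and its codimension. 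The assumption that $t$ is sufficiently negative in the moving case corresponds exactly to $s$ being sufficiently positive, which is the hypothesis of Theorem \ref{SpecConv} and Proposition \ref{NegConv}.

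Finally I would apply the $t \to +\infty$ results to $(\HH, \a, \tilde{\b}^s)$ and translate back. Theorem \ref{SpecConv}(i) gives $\lambda_j(\HH, \a, \tilde{\b}^s) \nearrow \lambda_j(\K, \a, \tilde{\b})$ for $j \leq J_-$ as $s \nearrow \infty$; negating and reindexing using the bijection above yields $\lambda_{-j}^t \searrow \lambda_{-j}(\K, \a, \b)$ as $t \searrow -\infty$, proving (i). Part (ii) follows identically from Theorem \ref{SpecConv}(ii): the positive eigenvalues of the tilde problem blow up to $+\infty$ as $s \nearrow s_{-j}$, giving $\lambda_{-j}^t \searrow -\infty$ as $t \searrow t_{-j} := -s_{-j}$. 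Part (iii) follows from Proposition \ref{NegConv} applied to $(\HH, \a, \tilde{\b}^s)$: its negative eigenvalues, which by the bijection are the negatives of the positive eigenvalues $\lambda_j^t$, tend to zero for $j = 1, \dots, \codim(\K)$ (or $\codim_{\HH^{\infty}}(\K \cap \HH^{\infty})$) as $s \to \infty$.

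The proof is entirely bookkeeping; the only point that genuinely requires care is verifying that the index bijection $\lambda_{\pm j}(\HH, \a, \b^t) = -\lambda_{\mp j}(\HH, \a, \tilde{\b}^s)$ respects multiplicities and that $\HH^t$ is preserved under the substitution, but both are immediate from the single identity $\tilde{\b}^s = -\b^t$.
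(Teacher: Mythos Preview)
Your proposal is correct and follows essentially the same route as the paper: both reduce to Theorem \ref{SpecConv} and Proposition \ref{NegConv} via the identity $\lambda_{-j}(\HH,\a,\b^t) = -\lambda_j(\HH,\a,(-\b)^{-t})$, together with the observation that $\HH^{\infty}$ is unchanged under $\b \mapsto -\b$ (which the paper records by citing Lemma \ref{subspace}). Your write-up is somewhat more explicit in verifying that (C1)--(C3) and the moving Hilbert spaces transfer under the substitution, but the argument is the same.
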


\subsection*{Discussion}
1. Although our results concern the whole spectrum of indefinite problems, the behavior of low eigenvalues guide our approach to problems that fail to be coercive such as the Neumann problem (\ref{Neumann}). In the moving Hilbert space case, when $|t|$ is small it is possible for a negative eigenvalue to increase through zero and become positive as $t$ increases. This causes the $j^{\text{th}}$ eigenvalue to have a jump discontinuity and decrease, before it increases again. This phenomena illustrates why we restrict to $t$ sufficiently large to prove the $j^{\text{th}}$ eigenvalue is monotone in $t$ and for $\A$ to be coercive on $\HH^t$.

In particular, the principal eigenvalue (the one with a positive eigenfunction) of the Neumann problem (\ref{Neumann}) has a sign that is the opposite of the sign of $\int_{\OO} b^t \, dx$ (see \cite[Corollary 2.2.8]{Bel}). For an eigenvalue problem coming from a parabolic equation with dynamical boundary conditions (see Table \ref{tab:ProbTab}), Bandle, von Below, and Reichel \cite[Theorem 21]{BvBR} proved that there is a smooth curve of principal eigenvalues that passes through zero as $t$ (a parameter in the boundary condition) is varied.

2.  Convergence Theorem \ref{SpecConv} shows that each positive eigenvalue of the limiting problem is obtained as a limit of approximating eigenvalues, but for the negative eigenvalues, Proposition \ref{NegConv} says only that a certain number of them tend to zero. It does not assert that other negative eigenvalues tend to the negative spectrum of the limiting problem. In finite dimensions, negative eigenvalues do in fact converge to the negative spectrum of the limiting problem, by Proposition \ref{prop:MatPen} below, but in infinite dimensions the situation can be more complicated.

For example, Proposition \ref{NegConv} implies when $\codim(K)$ is infinite that $\lambda_{-j}^t$ tends to zero for every $j \geq 1$, making it difficult to imagine in what sense the negative spectrum of the approximating problem could be said to converge to the negative spectrum of the limiting problem. The problem is seen particularly clearly for a 1-dimensional Sturm--Liouville problem $-u'' = \lambda b^t u$ with Dirichlet boundary conditions when $b$ and $c$ are continuous. In this case, the spectrum consists of simple eigenvalues for each $t$ even when $b^t$ changes sign (see \cite[\S 10.72]{Ince} or \cite[Theorem B]{MGL}) and Proposition \ref{ContSpec} implies that $(t,\lambda_{-j}^t)$ is a continuous curve of eigenvalues in the $t \lambda$-plane. Therefore, each curve of negative eigenvalues cannot cross and must tend to zero. In what sense (if any) could these eigenvalues be said to approach the negative eigenvalues of the limiting Sturm--Liouville problem? Further work is needed to understand this situation.

In general, spectral curves can cross, making it possible for a curve of negative eigenvalues to converge to a negative limiting eigenvalue. In order for this to happen, the indices of the eigenvalues forming such a curve must get larger and larger as the curve is crossed by successively many eigenvalue curves tending to zero. Examples with this behavior can be constructed using diagonal operators on $\ell^2(\mathbb{N})$.

\section{\bf{Applications to Partial Differential Equations}}
\label{sec:PDEApp}

Now we apply our Hilbert space convergence theorems from the previous section to prove that the spectrum of each approximating problem in Table \ref{tab:ProbTab} converges to the spectrum of its corresponding limiting problem, as described by Proposition \ref{prop:PDEApp}. Convergence of the spectrum for the problems in the first and second halves of Table \ref{tab:ProbTab} is proved via the fixed and moving Hilbert space versions of Theorem \ref{SpecConv}, respectively. It follows from Lemma \ref{lem:Strong} that eigenfunctions of the approximating problems converge in Sobolev norm to corresponding eigenfunctions of the limiting problem. 

\begin{table}
\small
\renewcommand{\arraystretch}{1.5}
    \begin{tabular}{cll}
    \toprule
        & \multicolumn{1}{c}{Approximating Problem} & \multicolumn{1}{c}{Limiting Problem} \\
        
        \midrule 
        \addlinespace[6pt]
        
        \shortstack{Schr\"odinger operator ($V \geq 0$) \\ Dirichlet Laplacian ($V \equiv 0$)} &
        
    $\begin{aligned}
       \begin{cases} 
      (-\Delta + V) u^t = \lambda^t b^t u^t & \text{ on } \OO \\
     \hspace{17mm} u^t = 0 & \text{ on } \partial \OO 
        \end{cases} 
    \end{aligned}$ & 
        
    $\begin{aligned}
       \begin{cases} 
      (-\Delta + V) u = \lambda b u \hspace{-2mm}& \text{ on } \Omega \\
      \hspace{17mm} u = 0 \hspace{-2mm}& \text{ on } \partial \Omega
       \end{cases}
    \end{aligned}$ \\
    
    \addlinespace[12pt]    
    \shortstack{Robin Laplacian \\ $(\alpha > 0)$} &
        
    $\begin{aligned}
        \begin{cases}
        \hspace*{\fill} -\Delta u^t = \lambda^t b^t u^t & \text{ on } \OO \\
        \partial_n u^t + \alpha u^t = 0 & \text{ on } \partial \OO
          \end{cases}
    \end{aligned}$ &
      
    $\begin{aligned}
        \begin{cases}
        \hspace{\fill} -\Delta u = \lambda b u & \text{ on } \Omega \\
        \hspace{\fill}  \partial_n u + \alpha u = 0 & \text{ on } \Gamma^c\\
        \hspace{\fill} u = 0 & \text{ on } \Gamma
          \end{cases}
    \end{aligned}$ \\
   
    \addlinespace[12pt]   
      
    \shortstack{Clamped Bi-Laplacian\\ ($\tau \geq 0$)} &
        
    $\begin{aligned}    
        \begin{cases} 
       (\Delta \Delta - \tau \Delta) u^t = \lambda^t b^t u^t & \text{ on } \OO \\
        \hspace{7mm} u^t = \partial_n u^t = 0 & \text{ on } \partial \OO
        \end{cases}
   \end{aligned}$ &
   
    $\begin{aligned}  
        \begin{cases} 
        (\Delta \Delta - \tau \Delta) u = \lambda b u & \text{ on } \Omega \\
        \hspace{8mm} u = \partial_n u = 0 & \text{ on } \partial \Omega 
        \end{cases}
   \end{aligned}$ \\
    
    \addlinespace[6pt]  
    \hdashline
    \addlinespace[6pt]

    \shortstack{Neumann Laplacian \\ \, \\ \, \\ \,} & 
        
    $\begin{aligned}
       \begin{cases} 
      -\Delta u^t = \lambda^t b^t u^t & \text{ on } \OO \\  \hspace{2.5mm} \partial_n u^t = 0 & \text{ on } \partial \OO 
        \end{cases} 
    \end{aligned}$ & 
        
    $\begin{aligned}
       \begin{cases} 
      -\Delta u = \lambda b u & \text{ on } \Omega \\
      \hspace{2.5mm} \partial_n u = 0 & \text{ on } \Gamma^c\\
      \hspace{6.5mm} u = 0 & \text{ on } \Gamma
       \end{cases}
    \end{aligned}$ \\
    
    \addlinespace[12pt]
    
    \shortstack{Free Bi-Laplacian\\ ($\tau \geq 0$)} &
        
    $\begin{aligned}    
        \begin{cases} 
        (\Delta \Delta - \tau \Delta) u^t = \lambda^t b^t u^t \hspace{-2mm}& \text{ on } \OO \\
        \hspace{3.5mm} Mu^t = Nu^t = 0 \hspace{-2mm}& \text{ on } \partial \OO
        \end{cases}
   \end{aligned}$ &
   
    $\begin{aligned}  
        \begin{cases} 
        (\Delta \Delta - \tau \Delta) u = \lambda b u \hspace{-2mm}& \text{ on } \Omega \\
        \hspace{4.5mm} Mu = Nu = 0 \hspace{-2mm}& \text{ on } \Gamma^c\\
        \hspace{8.5mm} u = \partial_nu = 0 \hspace{-2mm}& \text{ on } \Gamma
        \end{cases}
   \end{aligned}$ \\
   
   \addlinespace[12pt]
    
    \shortstack{Laplacian with Dynamical \\Boundary Conditions} &
        
        $\begin{aligned}
       \begin{cases} 
    \hspace{1mm}  -\Delta u^t = \lambda^t u^t & \text{ on } \OO \\
      -\partial_n u^t =  t  \lambda^t u^t & \text{ on } \partial \OO 
        \end{cases} 
    \end{aligned}$ & 
        
    $\begin{aligned}
       \begin{cases} 
      -\Delta u = \lambda u & \text{ on } \Omega = \OO \\
    \hspace{7mm} u = 0 & \text{ on } \partial \Omega = \partial \OO
       \end{cases}
    \end{aligned}$ \\
    
    \addlinespace[6pt]
    \bottomrule
    \addlinespace[6pt]
    
    \end{tabular}
    
    \caption{Under suitable assumptions on the domains and weights, as explained in Section \ref{AppPrec}, the eigenvalues of (the weak formulations of) each approximating problem converge (with multiplicity) to those of the corresponding limiting problems by Theorem \ref{SpecConv}. In the limiting problems, $\Gamma^c = \partial \Omega \setminus \Gamma$. In the Schr\"odinger operator example, we take $V \in L^{\infty}(\OO)$ to be nonnegative. In the Free Bi-Laplacian example, the boundary operators are $Mu = \partial_n^2 u$ and $Nu = \tau \partial_n u - \text{div}_{\partial \OO}(P_{\partial \OO}[(D^2u)n]) - \partial_n (\Delta u)$, where $P_{\partial \OO}$ is the operator that projects a vector at $y \in \partial \OO$ onto the tangent space of $\partial \OO$ at $y$.}
    \label{tab:ProbTab}
    \vspace{-5mm}
\end{table}

\subsection{Standing assumptions and definitions}
\label{AppPrec}

First we identify the relevant spaces $\HH$ and $\K$ and the bilinear forms $\a,\b,\c$ in order to formulate each approximating and limiting problem (except for the Laplacian with dynamical boundary conditions). Let $\OO \subset \R^d$ be a bounded Lipschitz domain, $k \geq 1$ an integer, and $b,c \in L^{p(d,k)}(\OO)$ where the exponent is
\begin{equation}
\label{eq:pdk}
p(d,k) = \begin{cases}
      d/2k & \text{if} \quad d \geq 2k+1,\\
      1 & \text{if} \quad d = 1,\dots, 2k.
\end{cases}
\end{equation}
The $\a$ forms for each problem will be given in Table \ref{tab:SpaceForm}. Using the functions $b$ and $c$ we define the bilinear forms
\[\b(u,v) = \int_{\OO} uv b \, dx \quad \text{and} \quad \c(u,v) = \int_{\OO} uv c \, dx.\]
Similarly define
\[b^t = b - tc \quad \text{and} \quad \b^t(u,v) = \int_{\OO} uv b^t \, dx = \b(u,v) - t\c(u,v).\]

\begin{table}
\small
          \renewcommand{\arraystretch}{1.5}
    \begin{tabular}{cccc}
    \toprule
         & Space $\HH$ & Form $\a(u,v)$ & Space $\K$ \\
        \midrule 
        \addlinespace[6pt]
        \shortstack{Schr\"odinger operator ($V \geq 0$), \\ Dirichlet Laplacian ($V \equiv 0$)} & 
        
    $\begin{aligned}
    H^1_0(\OO) 
    \end{aligned}$ & 
        
    $\begin{aligned}
    \int_{\OO} \nabla u \cdot \nabla v + uvV \, dx
    \end{aligned}$ &
    
    $\begin{aligned}
    H^1_0(\Omega)
    \end{aligned}$ \\
    
    \addlinespace[12pt]    
    
    \shortstack{Robin Laplacian\\ ($\alpha > 0$)} &
        
    $\begin{aligned}
    H^1(\OO)
    \end{aligned}$ &
      
    $\begin{aligned}
    \int_{\OO} \nabla u \cdot \nabla v \, dx + \alpha \int_{\partial \OO} uv \, dS    \end{aligned}$ &
    
    $\begin{aligned}
    H^1_{\Gamma}(\Omega)
    \end{aligned}$\\
    
    \addlinespace[12pt]
      
    \shortstack{Clamped Bi-Laplacian\\ ($\tau \geq 0$)} &
        
    $\begin{aligned}    
    H^2_0(\OO)
   \end{aligned}$ &
   
    $\begin{aligned}  
    \int_{\OO} (\Delta u) (\Delta v) + \tau \nabla u \cdot \nabla v \, dx
   \end{aligned}$ &
   
   $\begin{aligned}  
   H^2_0(\Omega)
   \end{aligned}$ \\
    
    \addlinespace[6pt]  
    \hdashline
    \addlinespace[6pt]   
    
    Neumann Laplacian & 
        
    $\begin{aligned}
    H^1(\OO) 
    \end{aligned}$ & 
        
    $\begin{aligned}
    \int_{\OO} \nabla u \cdot \nabla v \, dx
    \end{aligned}$ &
    
    $\begin{aligned}
    H^1_{\Gamma}(\Omega)
    \end{aligned}$ \\
    
    \addlinespace[12pt]
      
    \shortstack{Free Bi-Laplacian\\ ($\tau \geq 0$)} &
        
    $\begin{aligned}    
    H^2(\OO)
   \end{aligned}$ &
   
    $\begin{aligned}  
    \int_{\OO} D^2u \cdot D^2v + \tau \nabla u \cdot \nabla v \, dx
   \end{aligned}$ &
   
   $\begin{aligned}  
   H^2_{\Gamma}(\Omega)
   \end{aligned}$ \\
   
   \addlinespace[12pt]    
    
    \shortstack{Laplacian with Dynamical \\Boundary Conditions} &
        
    $\begin{aligned}
    H^1(\OO)
    \end{aligned}$ &
      
    $\begin{aligned}
    \int_{\OO} \nabla u \cdot \nabla v \, dx
    \end{aligned}$ &
    
    $\begin{aligned}
    H^1_0(\OO)
    \end{aligned}$\\
    
    \addlinespace[6pt]
    \bottomrule
    \addlinespace[6pt]
    
    \end{tabular}
    
    \caption{For each $\HH$ and $\a(\cdot,\cdot)$ listed above, the eigenvalues of $(\HH,\a,\b^t)$ converge to those of $(\K,\a,\b)$ as described by Proposition \ref{prop:PDEApp}. Here $\b^t(u,v) = \int_{\OO} uvb^t \, dx$ except in the Dynamical Boundary Conditions case where $\b^t(u,v) = \int_{\OO} uv \, dx - t \int_{\partial \OO} u v \, dS$. In the latter case $\Omega = \OO$. In the Free Bi-Laplacian case, $D^2u$ is the $d^2$-dimensional Hessian vector consisting of all second derivatives of $u$.}
    \label{tab:SpaceForm}
    \vspace{-5mm}
\end{table}

Assume $c(x) \geq 0$ for all $x$, and that \[\Omega' = \{c > 0\} \quad \text{and} \quad \Omega = \OO \setminus \overline{\Omega'},\] are nonempty open sets with Lipschitz boundary (as defined in \cite{EG}). 
In Remark \ref{rem:Omega} we discuss the possibility of relaxing the above hypotheses on $\OO,\Omega,$ and $\Omega'$.

In what follows $H^k(\Omega)$ is the usual Sobolev space $W^{k,2}(\Omega)$. Let 
\[\Gamma = \partial \Omega \cap \partial \Omega',\] 
and let the space
\[H^k_{\Gamma}(\Omega) = \{u \in H^k(\Omega) : T(D^{\beta}u) = 0 ~\text{on}~ \Gamma ~ \text{for each multiindex} ~ \beta ~ \text{with} ~ |\beta| \leq k-1\},\]
where $T : H^1(\Omega) \to L^2(\partial \Omega)$ is the trace operator. Observe that $H^k_{\Gamma}(\Omega)$ is a closed subspace of $H^{k}(\Omega)$ since $T \circ D^{\beta}$ is continuous on $H^k(\Omega)$. 

We show (in Lemma \ref{GenApp}) that if $\HH = H^k(\OO)$ or $H^k_{\partial \OO}(\OO)$ then the kernel of $\c$ is 
\[\K = \{u \in \HH : u \equiv 0 ~\text{on}~ \Omega'\},\] consisting of the functions that vanish on $\Omega'$. While $\K$ is the correct limiting space given by the convergence Theorem \ref{SpecConv}, it is more natural to consider $\widetilde \K$, which we define as the space of functions in $\K$ restricted to $\Omega$. 

To identify the space $\widetilde{\K}$, in Lemma \ref{GenApp} we show when $\HH = H^k(\OO)$ or $H^k_{\partial \OO}(\OO)$ that $\widetilde{\K} = H^k_{\Gamma}(\Omega)$ or  $H^k_{\partial \Omega}(\Omega)$, respectively. Since $\a(\cdot,\cdot)$ is defined by integration in our applications, and functions in $\K$ vanish on $\Omega'$, we can restrict the integration to $\Omega$ to obtain a new bilinear form $\widetilde \a$ on $\widetilde \K$ and similarly for $\b$ and $\c$. For the remainder of this section, we identify $\K$ with $\widetilde \K$ and $\a,\b,$ and $\c$ with $\widetilde \a, \widetilde \b,$ and $\widetilde \c$, respectively.

For the Laplacian with dynamical boundary conditions $\OO = \Omega$ and
\[\b^t(u,v) = \b(u,v) - t\c(u,v) = \int_{\OO} uv \, dx - t \int_{\partial \OO} uv \, dS.\]
In this case we will show that $\K = H^1_0(\OO)$ in Lemma \ref{DBCLem}.

\begin{remark}
It is known that $H^k_{\partial \Omega}(\Omega)$ can be characterized as the closure of $C_0^{\infty}(\Omega)$ when $\partial \Omega$ is sufficiently regular (see \cite[\S 2.4.4]{Necas}). In particular, this characterization holds for $k \in \{1,2\}$ when $\partial \Omega$ is Lipschitz so we will write $H^k_0(\Omega)$ for $H^k_{\partial \Omega}(\Omega)$ and similarly for the spaces on $\OO$ in these cases. While it seems plausible that $H^k_{\Gamma}(\Omega)$ could be constructed as the closure of $C^{\infty}_0(\Omega \sqcup \Gamma^c)$, where $\Gamma^c = \partial \Omega \setminus \Gamma$, we will work solely with the above definition of $H^k_{\Gamma}(\Omega)$.
\end{remark}

\subsection{PDE Convergence Results}

Now we construct triples $(\HH,\a,\b^t)$ and $(\K,\a,\b)$ as in Table \ref{tab:SpaceForm} by making choices of the Hilbert space $\HH$ and a bilinear form $\a(\cdot,\cdot)$. These triples correspond to weak formulations of the approximating and limiting eigenvalue problems for the partial differential operators considered in Table \ref{tab:ProbTab}. Let $\lambda_{\pm j} = \lambda_{\pm j}(\K,\a,\b)$ for each $j \geq 1$. Applying our results from Section \ref{sec:MR} to each of these triples, we obtain:

\begin{prop}
\label{prop:PDEApp}
Consider as above the domains $\OO,\Omega,\Omega'$, the weights $b,c$ and their associated bilinear forms $\b,\c$. For each problem in the first or second half of Table \ref{tab:SpaceForm}, the fixed or moving Hilbert space conditions hold, respectively, and $\K$ is the space indicated in the Table. For each $j \geq 1$:\\
(i) If $\{b|_{\Omega} > 0\}$ has positive measure then $\lambda_j^t$ and $\lambda_j$ both exist and $\lambda_j^t \nearrow \lambda_j$ as $t \to \infty$.\\
(ii) If $b|_{\Omega} \leq 0$ a.e. then $\lambda_j$ does not exist, and if $\lambda_j^{t_*}$ exists for some $t_* \in \R$ ($t_*$ sufficiently large for the problems in the second half of Table \ref{tab:SpaceForm}) then $\lambda_j^t \nearrow +\infty$ as $t \nearrow t_j$ for some $t_j \in (t_*,+\infty]$.\\
(iii) If $t$ is sufficiently large then $\lambda_{-j}^t$ exists and $\lambda_{-j}^t \to 0$ as $t \to \infty$, except when $d = 1$ in the Dynamical Boundary Conditions problem, in which case there is a single negative eigenvalue that tends to zero as $t \to +\infty$.
\end{prop}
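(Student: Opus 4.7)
The plan is to verify for each row of Table \ref{tab:SpaceForm} that the hypotheses of Theorem \ref{SpecConv} hold (the fixed conditions for the first three rows, the moving conditions for the last three), identify the kernel $\K = \Ker(\c)$ as the stated space, and then read off (i)--(iii) from Theorem \ref{SpecConv} and Proposition \ref{NegConv}. Continuity (C2) of each $\a$ is immediate from Cauchy--Schwarz together with continuity of the trace $H^1(\OO)\to L^2(\partial\OO)$. For the fixed-space rows, coercivity (C1) of $\A$ on $\HH$ reduces to standard inequalities: Poincar\'e on $H^1_0$ and $H^2_0$, the equivalence $\|\nabla u\|_{L^2}^2+\alpha\|u\|_{L^2(\partial\OO)}^2 \sim \|u\|_{H^1}^2$ for $\alpha>0$, and the equivalence $\|\Delta u\|_{L^2}^2+\tau\|\nabla u\|_{L^2}^2\sim\|u\|_{H^2}^2$ on $H^2_0$. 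For the moving-space rows, $\Ker(\a)$ equals the constants (Neumann, Dynamical BC, Free Bi-Laplacian with $\tau>0$) or the affine functions (Free Bi-Laplacian with $\tau=0$); coercivity of $\A$ on $\Ker(\a)^\perp$ then follows from Poincar\'e--Wirtinger type inequalities, and the transversality $\Ker(\a)\cap\K=\{0\}$ is immediate because a nonzero polynomial cannot vanish on the open set $\Omega'$. Finally, for (C3), weak continuity of $\b$ and $\c$ against weights in $L^{p(d,k)}$ follows by the Rellich--Kondrachov compact embedding $H^k(\OO)\hookrightarrow L^{q}(\OO)$ with $q$ H\"older-dual to $2p(d,k)$; for the boundary form in the Dynamical BC row we use instead the compact trace $H^1(\OO)\hookrightarrow L^2(\partial\OO)$.

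To identify $\K$, observe that $u\in\Ker(\c)$ forces $\c(u,u)=\int_{\OO} u^2 c\,dx=0$ in the volume-weighted cases, so $u\equiv 0$ a.e.\ on $\Omega'=\{c>0\}$; restricting to $\Omega$ then yields the trace conditions on $\Gamma$ defining $H^k_\Gamma(\Omega)$ (or the full-boundary version in the Dirichlet-type rows). This is formalized in Lemma \ref{GenApp}. In the Dynamical BC problem, $\c(u,u)=\int_{\partial\OO}u^2\,dS=0$ forces $u|_{\partial\OO}=0$, so $\K=H^1_0(\OO)$ by Lemma \ref{DBCLem}. With $\K$ identified, parts (i)--(iii) follow from the abstract theorems. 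For (i), choosing a smooth nonnegative bump $\varphi$ supported in $\{b>0\}\cap\Omega$ gives $\B(\varphi)>0$, so $(\K,\a,\b)$ has at least one positive eigenvalue by Theorem \ref{thm:ExistSpec}; the same $\varphi$ viewed in $\HH$ shows $\b^t\not\equiv 0$ on $\HH$, producing $\lambda_j^t$, and convergence follows from Theorem \ref{SpecConv}(i). For (ii), $b|_\Omega\leq 0$ a.e.\ yields $\B\leq 0$ on $\K$, hence $J_+=0$ and Theorem \ref{SpecConv}(ii) gives the blow-up. For (iii), Proposition \ref{NegConv} supplies $\codim(\K)$-many (resp.\ $\codim_{\HH^\infty}(\K\cap\HH^\infty)$-many) negative eigenvalues tending to zero; in the volume-weighted problems $\Omega'$ has positive measure, so these codimensions are infinite and every $\lambda_{-j}^t$ tends to zero.

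The subtlest point is the $d=1$ Dynamical BC exception in (iii). Here $\partial\OO$ consists of two points $\{x_1,x_2\}$, and the $\b^t$-orthogonality condition defining $\HH^t$ reads $\int_{\OO}u\,dx=t\bigl(u(x_1)+u(x_2)\bigr)$; dividing by $t$ and sending $t\to\infty$ collapses this to the single limiting constraint $u(x_1)+u(x_2)=0$, so that $\HH^\infty$ sits codimension one inside $\HH$ modulo constants, and $\codim_{\HH^\infty}(\K\cap\HH^\infty)=1$, accounting for exactly one negative eigenvalue tending to zero. The other main technical hurdle is verifying (C3) in the borderline dimension $d=2k$, where the exponent $p(d,k)=1$ requires a slightly more careful application of the Sobolev embedding than in the subcritical range $d>2k$; this is handled by the embedding $H^k(\OO)\hookrightarrow L^q(\OO)$ valid for every finite $q$ in the critical dimension, together with equiintegrability to pass the weak-convergence argument through the limit.
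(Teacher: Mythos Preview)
Your overall strategy matches the paper's: verify the fixed or moving Hilbert space conditions row by row (as the paper does in the individual Propositions for each operator) and then invoke Theorem~\ref{SpecConv} and Proposition~\ref{NegConv}. The identification of $\K$, the coercivity checks, and the Dynamical BC analysis are all in the same spirit as Lemma~\ref{GenApp} and Lemma~\ref{DBCLem}.

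There is, however, a real gap in your treatment of part~(i). You produce a single bump $\varphi$ supported in $\{b>0\}\cap\Omega$ and conclude that $(\K,\a,\b)$ has at least one positive eigenvalue. But the Proposition asserts convergence $\lambda_j^t\nearrow\lambda_j$ for \emph{every} $j\geq 1$, which via Theorem~\ref{SpecConv}(i) requires $j\leq J_+$ for every $j$, i.e.\ $J_+=\infty$. One bump does not give this. The paper's Lemma~\ref{GenApp}(iv) constructs, for each $N$, a family $\zeta_1,\dots,\zeta_N$ of smooth functions with pairwise disjoint supports inside $\{b>0\}\cap\Omega$ (via an approximate identity argument), so that $\B$ is strictly positive on the $\a$-unit sphere of their $N$-dimensional span; in the moving case an additional linear-algebra step projects this span into $\HH^t$. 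The same construction is needed again in (iii) to show $\codim(\K)$ and $\codim_{\HH^\infty}(\K\cap\HH^\infty)$ are infinite in the volume-weighted problems, which you assert but do not argue.

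A second, more technical point: your (C3) verification invokes the ``Rellich--Kondrachov compact embedding $H^k(\OO)\hookrightarrow L^q(\OO)$ with $q$ H\"older-dual to $2p(d,k)$''. When $d\geq 2k+1$ this lands exactly at the critical Sobolev exponent $2^*=2d/(d-2k)$, where the embedding is continuous but \emph{not} compact, so your argument does not close as stated. The paper's Lemma~\ref{GenApp}(i) avoids this by first passing to a pointwise-a.e.\ convergent subsequence via the compact embedding into $L^2$, and then using boundedness in $L^{2^*}$ together with Banach--Alaoglu to obtain weak (or weak*) convergence of $(u^n)^2$ in the dual exponent, rather than strong convergence of $u^n$ at the critical exponent. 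Your remark about the borderline $d=2k$ case is correct in spirit, but the actual delicate regime is $d>2k$.
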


The proposition is proved in Section \ref{sec:ProofPDEApp}. Proposition \ref{prop:PDEApp} could easily be strengthened to hold for problems with more general symmetric elliptic operators, but we choose to restrict the applications to the Laplacian and Bi-Laplacian for simplicity.


\section{\textbf{Application to Matrix Pencils and Blow-up Phenomenon}}
\label{sec:MatPen}

When $\HH$ is finite dimensional the eigenvalue problem $(\HH,\a,\b^t)$ is a variant of the traditional eigenvalue problem from linear algebra. In this setting we are able to obtain a complete description of the spectrum as $t \to \infty$. This example also illustrates that the range of indices for which Proposition \ref{NegConv} holds is as large as possible in the fixed Hilbert space case.

Let $A,B,$ and $C$ be symmetric $d \times d$ matrices. Assume that $A$ is positive definite, and that $C$ is positive semi-definite, nonzero, and has a nontrivial kernel. Let $B^t = B - t C$ and consider the \emph{matrix pencil} eigenvalue problem
\begin{equation*}
    A v^t = \lambda^t B^t v^t \quad \text{for} \quad (\lambda^t,v^t) \in \R \times (\R^d \setminus \{0\}), \quad t \in \R.
\end{equation*}
Denote this eigenvalue problem by the triple $(\R^d,\a,\b^t)$, where $\a(u,v) = u \cdot (A v)$, and $\b,\c,$ and $\b^t = \b - t \c$ are defined similarly. The fixed Hilbert space conditions hold and $\Ker(\c) = \Ker(C)$, so that $(\Ker(\c),\a,\b)$ is the limiting problem.

\begin{figure}
    \centering
    \includegraphics[scale = .5]{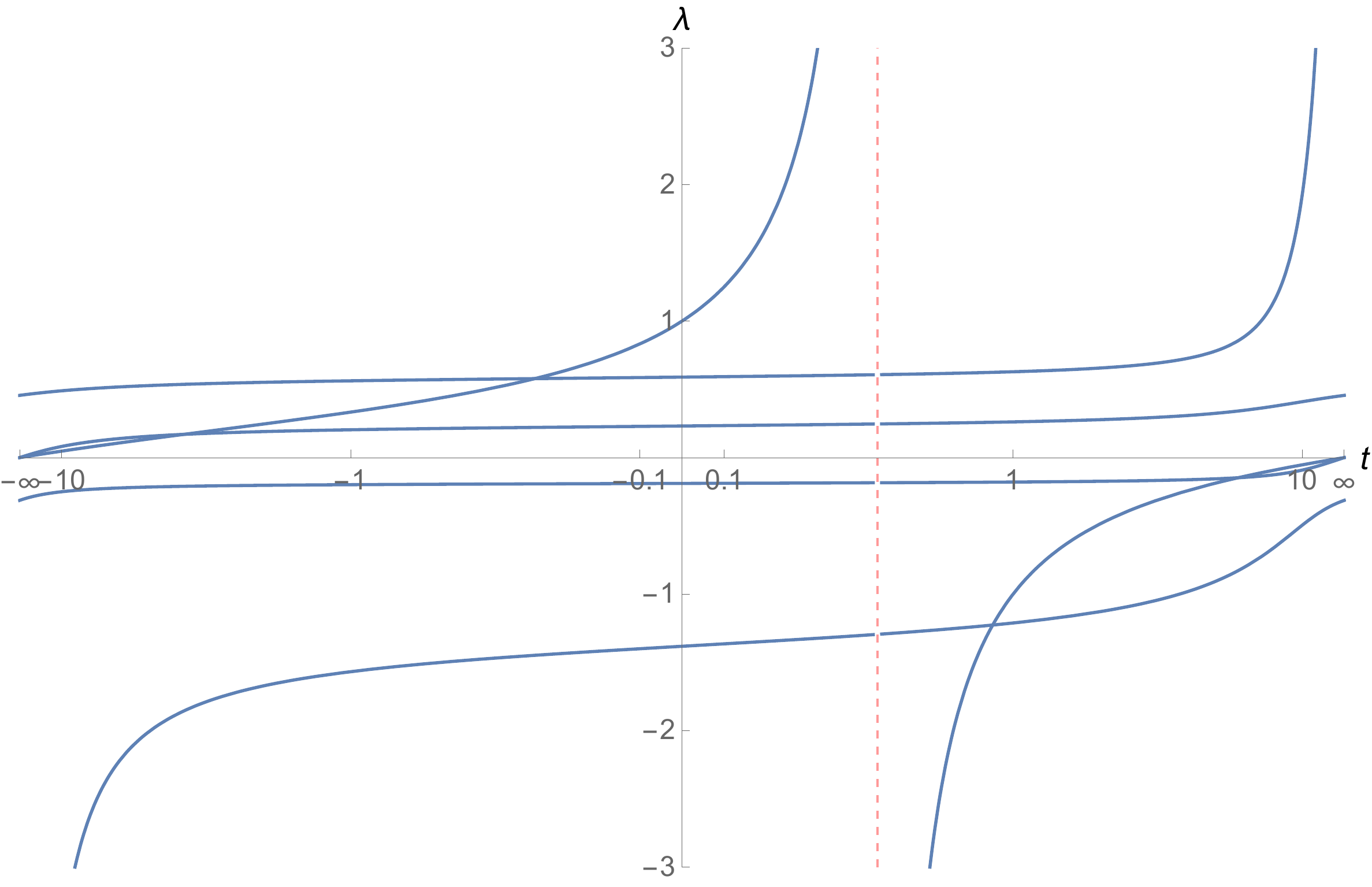}
    \caption{Eigenvalues of a matrix pencil problem $(\R^5,\a,\b^t)$ are plotted using a nonlinear scale such that the right and left endpoints of the horizontal axis represent $t = \pm \infty$. $A = I_{5 \times 5}$ is the $5 \times 5$ identity matrix and $B$ and $C$ are as in (\ref{eq:MatBC}). Apart from two negative eigenvalues that increase to zero, the eigenvalues of $(\R^d,\a,\b^t)$ converge to those of $(\Ker(\c),\a,\b)$ as $t \to +\infty$, including a positive eigenvalue that blows-up to the eigenvalue-at-infinity of the limiting problem. Notice the positive eigenvalue that blows-up in finite-time ``reappears" as a negative eigenvalue, near the dashed vertical asymptote.}
    \label{fig:MatPen}
\end{figure}

Due to the form of the eigenequation it is natural to say that $(\R^d,\a,\b^t)$ has an \emph{eigenvalue-at-$\infty$} of multiplicity $\dim(\Ker(\b^t))$ when $\Ker(\b^t)$ is nontrivial. We view the eigenvalues-at-$\infty$ as genuine eigenvalues in this section. Recall that $J_{\pm}$ are the number of positive $(+)$ and negative $(-)$ eigenvalues of $(\Ker(\c),\a,\b)$. Similarly, let $J_{\infty}$ denote the number of eigenvalues-at-$\infty$ of the limiting problem $(\Ker(\c),\a,\b)$, so that $J_{\infty} = \dim(\Ker(\b|_{\Ker(\c)}))$. Let $\lambda_{\pm j}$ denote the eigenvalues of the limiting problem $(\Ker(\c),\a,\b)$. The next proposition will be proved at the end of Section \ref{sec:ProofPDEApp}.

\begin{prop}[Matrix Pencil Convergence]
\label{prop:MatPen}
If the matrices $A, B,$ and $C$ are as above and $\Ker(\b) \cap \Ker(\c)$ is trivial then:
\begin{enumerate}
    \item[(i)] $\lim_{t \to \infty} \lambda_j^t = + \infty$ for $J_+ + 1 \leq j \leq J_+ + J_{\infty}$;
    \item[(ii)] $\lim_{t \to \infty} \lambda_j^t = \lambda_j$ for $1 \leq j \leq J_+$;
    \item[(iii)] $\lim_{t \to \infty} \lambda_{-j}^t = 0$ for $1 \leq j \leq \rank(C)$;
    \item[(iv)] $\lim_{t \to \infty} \lambda_{-j}^t = \lambda_{-j+r}$ for each $\rank(C) + 1 \leq j \leq \rank(C) + J_-$.
\end{enumerate}
\end{prop}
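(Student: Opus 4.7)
The plan is to treat parts (ii) and (iii) as direct corollaries of the general results and then carry out a Schur complement analysis for the remaining parts (i) and (iv). Part (ii) follows from Theorem \ref{SpecConv}(i) since the fixed Hilbert space conditions hold. Part (iii) follows from Proposition \ref{NegConv} together with the finite-dimensional identity $\codim(\K) = d - \dim(\Ker C) = \rank(C)$. The substance of the proposition lies in (i) and (iv), which require a count of positive and negative eigenvalues beyond what the general theorems supply.

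For (i) and (iv), I would choose an $\a$-orthonormal basis of $\R^d$ adapted to the splitting $\R^d = \K \oplus \K^{\perp_{\a}}$, where $\K^{\perp_{\a}}$ is the $\a$-orthogonal complement of $\K$. In this basis $A = I$, and
\[
B = \begin{pmatrix} P & R \\ R^{T} & Q \end{pmatrix}, \qquad
C = \begin{pmatrix} 0 & 0 \\ 0 & D \end{pmatrix},
\]
with $D$ positive definite (since $\c$ is nondegenerate on $\K^{\perp_{\a}}$). The limiting problem $(\K,\a,\b)$ is equivalent to the symmetric eigenvalue problem for $P$ on $\K$, so $P$ has inertia $(J_+, J_-, J_{\infty})$. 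For $t$ large enough that $Q - tD$ is negative definite, the Schur complement identity yields $\text{inertia}(B^t) = (n_+(P_t),\ n_-(P_t) + r,\ \nu(P_t))$, where $r = \rank(C)$ and
\[
P_t := P - R(Q - tD)^{-1} R^{T} = P + t^{-1} R D^{-1} R^{T} + O(t^{-2})
\]
as $t \to \infty$.

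The critical step is determining how the $J_{\infty}$ null directions of $P$ are perturbed in $P_t$. On $\Ker(P) \subset \K$ the first-order correction is the quadratic form $v \mapsto t^{-1}\lVert D^{-1/2} R^T v \rVert^{2} \geq 0$, and strict positivity on $\Ker(P) \setminus \{0\}$ follows from the hypothesis: indeed, $Pv = 0$ together with $R^T v = 0$ implies $Bv = 0$, which combined with $v \in \K = \Ker(\c)$ forces $v \in \Ker(\b) \cap \Ker(\c) = \{0\}$. Hence the $J_\infty$ null eigenvalues of $P$ become strictly positive in $P_t$ for large $t$, giving $n_+(B^t) = J_+ + J_\infty$ and $n_-(B^t) = J_- + r$. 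Since the number of positive (resp.\ negative) $\lambda$-eigenvalues of the pencil equals $n_+(B^t)$ (resp.\ $n_-(B^t)$), there are exactly $J_+ + J_\infty$ positive and $J_- + r$ negative eigenvalues of $(\R^d,\a,\b^t)$ for large $t$. Combined with (ii), the extra $J_\infty$ positive eigenvalues must blow up to $+\infty$ by Theorem \ref{SpecConv}(ii), proving (i). Combined with (iii) and the convergence of the bounded eigenvalues of $B^t$ to those of $P$ (via $P_t \to P$), the extra $J_-$ negative eigenvalues stay bounded away from zero and converge, in order, to the $J_-$ negative eigenvalues of the limiting problem, giving $\lambda_{-j}^t \to \lambda_{-j+r}$ for $r+1 \leq j \leq r + J_-$, which is (iv). The main obstacle I anticipate is the sign analysis in the Schur complement perturbation: showing that the $J_\infty$ null eigenvalues of $P$ become positive (rather than negative) in $P_t$. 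This is the unique place where the hypothesis $\Ker(\b) \cap \Ker(\c) = \{0\}$ enters essentially, as it is precisely the condition guaranteeing injectivity of $R^T$ on $\Ker(P)$.
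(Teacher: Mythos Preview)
Your proof is correct and genuinely different from the paper's. Both arguments dispatch (ii) and (iii) via Theorem~\ref{SpecConv} and Proposition~\ref{NegConv}, but they diverge on the counting required for (i) and (iv). The paper stays within its Hilbert-space machinery: it first shows $\Ker(\b^t)$ is trivial for large $t$ by interpreting its nontriviality as an eigenvalue of the auxiliary pencil $(\R^d,\c,\b)$, then uses the Strong Convergence Lemma~\ref{lem:Strong}(iii) to argue that eigenvectors with $\lambda_j^t \to +\infty$ accumulate in $\Ker(\b|_{\K\times\K})$, giving $J_+^t \le J_+ + J_\infty$. Dimension counting then forces $J_-^t \ge r + J_-$, and Lemma~\ref{lem:ConNEv} pins down which negative eigenvalues go to zero (their eigenvectors land in $\K^{\perp_\a}$) versus which converge to limiting negative eigenvalues (their eigenvectors land in $\K$). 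Your route is instead pure linear algebra: block-diagonalize with respect to $\K \oplus \K^{\perp_\a}$, apply Haynsworth inertia additivity to read off $\text{inertia}(B^t)$ from the Schur complement $P_t$, and use first-order perturbation on $\Ker(P)$ to see that the $J_\infty$ null directions become positive---precisely where the hypothesis $\Ker(\b)\cap\Ker(\c)=\{0\}$ enters, as you correctly identify. Your approach is more transparent about the block structure and avoids the eigenvector-limit lemmas entirely; the paper's approach has the virtue of exercising the general framework it has built.

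One small caution: your parenthetical ``via $P_t \to P$'' for (iv) is not quite the right mechanism. The Schur complement $P_t$ controls the \emph{inertia} of $B^t$, not its eigenvalues. The convergence of the bounded eigenvalues of $B^t$ to those of $P$ is a separate (standard) fact: write an eigenvector as $(x,y)$, use the lower block equation to get $y = O(t^{-1})$, and substitute into the upper block to see $Px = \mu x + O(t^{-1})$. This is easy, but it is not the Schur-complement statement.
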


In the proposition we require that the $\Ker(B) \cap \Ker(C)$ is trivial only to simplify the statement. The proposition can be modified to account for $\Ker(B) \cap \Ker(C)$ being nontrivial by observing that the problem $((\Ker(B) \cap \Ker(C))^{\perp_{\a}},\a,\b^t)$ has the same spectrum as $(\R^d,\a,\b^t)$ after adding an eigenvalue-at-$\infty$ of multiplicity $\dim(\Ker(B) \cap \Ker(C))$ for each $t \in \R$.

Positive eigenvalues that tend to $+\infty$ in finite time ``reappear" from $-\infty$ as negative eigenvalues. This and other phenomena can be seen in Figure \ref{fig:MatPen}, where the eigenvalues of $(\R^5,\a,\b^t)$ are plotted with $A = I_{5 \times 5}$ and

\begin{equation}
\label{eq:MatBC}
B = \left(
\begin{array}{rrrrr}
 0 & 0 & 0 & -2 & 0 \\
 0 & 2 & -1 & 2 & 0 \\
 0 & -1 & -3 & 3 & 0 \\
 -2 & 2 & 3 & 1 & 0 \\
 0 & 0 & 0 & 0 & \,1 \\
\end{array}
\right) \quad \text{and} \quad C = \left(\begin{array}{ccccc}
 0 & \,0 & \,0 & \,0 & \,0 \\
 0 & 0 & 0 & 0 & 0 \\
 0 & 0 & 0 & 0 & 0 \\
 0 & 0 & 0 & 1 & 0 \\
 0 & 0 & 0 & 0 & 2 \\
\end{array}
\right).
\end{equation}
Note that since $A$ is the identity, the eigenvalues plotted in the figure are just the reciprocals of the eigenvalues of the matrix $B^t$.


\section{\bf{Preliminary Lemmas}}

\label{sec:Lemmas}


Recall that $\K = \Ker(\c) = \{u \in \HH : \c(u,v) = 0 ~ \text{for all} ~ v \in \HH\}$ and $\HH^{\infty} = \liminf_{t \to \infty} \HH^t$, where $\HH^t = \{u \in \HH : \b^t(u,w) = 0 \text{ for all } w \in \Ker(\a)\}$.

\begin{lemma}[Subspace Lemma]
\label{subspace}
If (C3) holds, then $\K,\HH^t,$ and $\HH^{\infty}$ are closed subspaces of $(\HH, \langle \cdot, \cdot \rangle)$. Consequently, $(\K, \langle \cdot, \cdot \rangle)$ and $(\HH^t, \langle \cdot, \cdot \rangle)$ are Hilbert spaces.  Whether or not (C3) holds we have
\begin{equation}
\label{eq:Hinf}
\HH^{\infty} = \{u \in \HH : \b(u,w) = \c(u,w) = 0 ~ \text{for all} ~ w \in \Ker(\a)\}.
\end{equation}
\end{lemma}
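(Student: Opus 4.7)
The plan is to handle the three ``closed subspace'' assertions in a uniform manner via Remark \ref{rem:WeakCont}, to establish the characterization \eqref{eq:Hinf} by a direct algebraic argument (varying the parameter $t$) which needs no continuity hypothesis at all, and only then to read off the closedness of $\HH^{\infty}$ from that characterization.

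First I would appeal to Remark \ref{rem:WeakCont} to conclude that under (C3) the forms $\b$ and $\c$, and hence each $\b^t = \b - t\c$, are norm continuous on $\HH \times \HH$. Then for any fixed $v \in \HH$ the linear functional $u \mapsto \c(u,v)$ is bounded, so its zero set is a closed hyperplane of $\HH$. Writing
\[\K = \bigcap_{v \in \HH} \{u \in \HH : \c(u,v) = 0\}, \qquad \HH^t = \bigcap_{w \in \Ker(\a)} \{u \in \HH : \b^t(u,w) = 0\},\]
the closedness of $\K$ and each $\HH^t$ falls out as intersections of closed subspaces, and the Hilbert space structure of $(\K,\langle\cdot,\cdot\rangle)$ and $(\HH^t,\langle\cdot,\cdot\rangle)$ is inherited from $\HH$.

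Next I would prove the set equality \eqref{eq:Hinf}, noting upfront that this part does not use (C3). Writing $\V$ for the right-hand side, the inclusion $\V \subseteq \HH^{\infty}$ is immediate, since $\b^t(u,w) = \b(u,w) - t \c(u,w)$ vanishes identically in $t$ whenever $\b(u,w) = \c(u,w) = 0$, so any such $u$ lies in every $\HH^t$. The content sits in the reverse inclusion: if $u \in \HH^{\infty}$ then by definition of $\liminf$ there exists some $s \in \R$ with $u \in \HH^t$ for every $t > s$, i.e.\ for each $w \in \Ker(\a)$ the affine identity $\b(u,w) = t \c(u,w)$ holds for all $t > s$. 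Evaluating at two distinct values $t_1, t_2 > s$ and subtracting forces $\c(u,w) = 0$, and substituting back gives $\b(u,w) = 0$. This ``vary $t$'' step is the only genuinely non-routine ingredient in the proof.

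With the characterization in hand and (C3) in force, each hyperplane $\{u : \b(u,w) = 0\}$ and $\{u : \c(u,w) = 0\}$ is closed by exactly the argument of the first paragraph, so $\HH^{\infty}$ is a closed subspace as an intersection of closed subspaces. The main obstacle I anticipate is making the varying-$t$ trick clean and emphasizing that it sidesteps any continuity assumption; the rest is bookkeeping about kernels of bounded linear functionals.
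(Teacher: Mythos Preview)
Your proposal is correct and matches the paper's own proof essentially line for line: both write $\K$ and $\HH^t$ as intersections of kernels of the continuous linear functionals $u \mapsto \c(u,v)$ and $u \mapsto \b^t(u,w)$, both derive the characterization \eqref{eq:Hinf} by the same ``vary $t$'' trick (the paper phrases it as ``the right side depends on $t$ but the left does not''), and both then read off closedness of $\HH^{\infty}$ from that characterization.
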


\begin{proof} In either the fixed or moving Hilbert space setting, the map $c_v : \HH \to \R$ defined by $c_v(u) = \c(u,v)$ is a norm continuous linear functional on $\HH$ for each $v \in \HH$ by condition (C3). Observe that $\K = \cap_{v \in \HH} \Ker(c_v)$ by the definition of $\K$. Thus, $\K$ is a closed subspace since it is the intersection of closed subspaces. In the moving Hilbert space setting, the same argument holds for $\HH^t$, and once we prove (\ref{eq:Hinf}) it will hold for $\HH^{\infty}$ as well.

By definition of $\HH^t$, the right side of (\ref{eq:Hinf}) is contained in $\HH^t$ for all $t$. Thus, it is contained in $\HH^{\infty}$. Let $u \in \HH^{\infty}$ so that $\b(u,w) = t\c(u,w)$ for all $w \in \Ker(\a)$ and for all $t$ sufficiently large. Since the right side depends on $t$ but the left side does not, we must have $\c(u,w) = 0$ for all $w \in \Ker(\a)$. Consequently, $\b(u,w) = 0$ for all $w \in \Ker(\a)$ as well. Thus, $u$ is an element of the right side of (\ref{eq:Hinf}) and the equality holds.

\end{proof}


\subsection*{Moving Hilbert space preliminary lemmas}


To prove our convergence theorems in the moving Hilbert space case we show that $\A(\cdot)$ is uniformly coercive on $\HH^t$ (Lemma \ref{MCoercivity}) and establish that $\lambda_j(\HH,\a,\b^t)$ is increasing for large $t$ and bounded from above (Lemma \ref{LimExists}). To show that the eigenvalues are increasing in $t$, it is not enough that the function $t \mapsto \B^t(u)$ is decreasing for each $u \in \HH$ since the subspace $\HH^t$ depends on $t$. 

The proof of the following lemma generalizes a calculation by Bandle and Wagner \cite[\S 2]{BW} for the first eigenvalue of the dynamical boundary conditions problem to the Hilbert space setting. To do this define
\[T = \frac{M_{|\B|}}{m_{\C}} + 1,\]
where $M_{|\B|}$ and $m_{\C}$ are, respectively, the maximum and minimum of $|\B|$ and $\C$ over the unit sphere of $\Ker(\a)$. In the proof of the lemma we show that $-\sgn(t) \b^t(\cdot,\cdot)$ is an inner-product on $\Ker(\a)$ for $|t| > T$. In this case let $\{w_1, \dots, w_m\}$ be a $(-\sgn(t) \b^t)$-orthonormal basis for $\Ker(\a)$ and $Q^t : \HH \to \HH$ be the linear operator defined by $Q^t u = -\sgn(t) \sum_{i = 1}^m \b^t(u,w_i) w_i$. Also define the linear operator $P^t : \HH \rightarrow \HH$ by $P^t u  = (I - Q^t) u$, where $I$ is the identity operator. In what follows $\oplus$ denotes the algebraic direct sum of two vector subspaces.

\begin{lemma}[Projection Lemma]
\label{PseudoMon}
Assume $\dim(\Ker(\a)) < \infty$ and $\Ker(\a) \cap \K$ is trivial.
\begin{enumerate}
    \item[(i)] If $u \in \Ker(\a)$ is nonzero, then $\B^t(u) < 0$ whenever $t > T$ and $\B^t(u) > 0$ whenever $t < -T$. Consequently, $P^t$ is a projection operator with $\Range(P^t) = \HH^t$ and induces the decomposition $\HH = \HH^t \oplus \Ker(\a)$ when $|t| > T$.
    
    \item[(ii)] If $T < s \leq t$ then
    \[\B^s(P^s v) \geq \B^s(v) \geq \B^t(v), \quad \text{for each} ~ v \in \HH.\]
\end{enumerate}
\end{lemma}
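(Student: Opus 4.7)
\bigskip

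\noindent\textbf{Proof proposal.}
The plan is to first establish that $\c$ is strictly positive on $\Ker(\a)\setminus\{0\}$, use this to show that $-\sgn(t)\b^t$ is a genuine inner product on $\Ker(\a)$ for $|t|>T$, and then exploit the resulting $\b^t$-orthogonal decomposition $\HH = \HH^t \oplus \Ker(\a)$ to obtain the monotonicity in (ii).

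\emph{Part (i).} Since $\C(\cdot)\geq 0$ on $\HH$, the Cauchy--Schwarz inequality $\c(u,v)^2 \leq \C(u)\C(v)$ holds. Hence any $u\in\Ker(\a)$ with $\C(u)=0$ satisfies $\c(u,v)=0$ for every $v\in\HH$, i.e.\ $u\in\K$; but $\Ker(\a)\cap\K=\{0\}$, so $\C$ is strictly positive on $\Ker(\a)\setminus\{0\}$. Because $\dim\Ker(\a)<\infty$, the unit sphere of $\Ker(\a)$ is compact, $\C$ and $|\B|$ are continuous there, and so $m_{\C}>0$ and $M_{|\B|}<\infty$. For $t>T$ and $u\in\Ker(\a)$ with $\|u\|=1$, the choice of $T$ gives
\[
\B^t(u)=\B(u)-t\C(u) \leq M_{|\B|}-Tm_{\C} = M_{|\B|}-(M_{|\B|}+m_{\C}) < 0,
\]
and homogeneity extends this to all nonzero $u\in\Ker(\a)$. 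The analogous estimate handles $t<-T$. Consequently $-\sgn(t)\b^t$ is a positive-definite symmetric bilinear form on $\Ker(\a)$, so a $(-\sgn(t)\b^t)$-orthonormal basis $\{w_1,\dots,w_m\}$ exists.

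Next I verify the projection claims. A short computation using $-\sgn(t)\b^t(w_i,w_j)=\delta_{ij}$ shows $Q^t w_j = w_j$, hence $(Q^t)^2=Q^t$ on $\Ker(\a)$ and on all of $\HH$ after expanding. For arbitrary $v\in\HH$,
\[
\b^t(P^tv,w_j) = \b^t(v,w_j) + \sgn(t)\sum_i \b^t(v,w_i)\b^t(w_i,w_j) = \b^t(v,w_j)-\b^t(v,w_j)=0,
\]
so $P^tv\in\HH^t$; and if $v\in\HH^t$ already then $Q^tv=0$ and $P^tv=v$. Thus $P^t$ is a projection onto $\HH^t$ with kernel $\Ker(\a)$. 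The decomposition $\HH=\HH^t\oplus\Ker(\a)$ then follows from $v=P^tv+Q^tv$ together with $\HH^t\cap\Ker(\a)=\{0\}$, since any nonzero element of the intersection would satisfy $\b^t(v,v)=0$, contradicting part (i).

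\emph{Part (ii).} The second inequality is immediate: $\B^s(v)-\B^t(v)=(t-s)\C(v)\geq 0$ because $\C\geq 0$ and $s\leq t$. For the first inequality, write $v=P^sv+Q^sv$ with $P^sv\in\HH^s$ and $Q^sv\in\Ker(\a)$. The defining property of $\HH^s$ gives $\b^s(P^sv,Q^sv)=0$, so
\[
\B^s(v) = \B^s(P^sv) + 2\b^s(P^sv,Q^sv) + \B^s(Q^sv) = \B^s(P^sv) + \B^s(Q^sv).
\]
Since $s>T$ and $Q^sv\in\Ker(\a)$, part (i) yields $\B^s(Q^sv)\leq 0$, whence $\B^s(v)\leq \B^s(P^sv)$.

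The only genuinely delicate step is verifying $m_{\C}>0$: without the hypothesis $\Ker(\a)\cap\K=\{0\}$ together with Cauchy--Schwarz for the semi-definite form $\c$, one could not conclude $\C>0$ on $\Ker(\a)\setminus\{0\}$, and the threshold $T$ would be meaningless. Everything else is a bookkeeping exercise in finite-dimensional linear algebra on $\Ker(\a)$ combined with the $\b^s$-orthogonal splitting of $\HH$.
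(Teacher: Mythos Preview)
Your proof is correct and follows essentially the same route as the paper's: establish $m_{\C}>0$ via Cauchy--Schwarz for the semi-definite form $\c$ together with $\Ker(\a)\cap\K=\{0\}$, deduce that $-\sgn(t)\b^t$ is an inner product on $\Ker(\a)$ for $|t|>T$, verify the projection properties of $P^t$ from the resulting orthonormal basis, and obtain part (ii) by expanding $\B^s$ along the splitting $v=P^sv+Q^sv$ and discarding the nonpositive term $\B^s(Q^sv)$. Your write-up is in fact slightly more explicit than the paper's in checking $P^tv\in\HH^t$ and $\HH^t\cap\Ker(\a)=\{0\}$, but the argument is the same.
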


\begin{proof}[Proof of Lemma \ref{PseudoMon}]
(i) Since $\B$ and $\C$ are weakly (and therefore norm) continuous and $\Ker(\a)$ is finite dimensional, $|\B|$ attains its maximum $M_{|\B|}$ and $\C$ attains its minimum $m_{\C}$ on the unit sphere of $\Ker(\a)$. Moreover, since $\Ker(\a) \cap \K$ is trivial by (C1$'$) we have that $m_{\C} > 0$ by using Cauchy--Schwarz and the definition of $\K = \Ker(\c)$. This shows that $T = \frac{M_{|\B|}}{m_{\C}} + 1$ is well-defined and $\B^{t}$ has a definite sign on $\Ker(\a)$ for all $t$ with $|t| > T$.

It follows that $-\sgn(t) \b^t(\cdot,\cdot)$ is positive definite on $\Ker(\a)$ when $|t| > T$. Thus, $-\sgn(t) \b^t(\cdot,\cdot)$ is an inner-product on $\Ker(\a)$. Choose a $(-\sgn(t) \b^t)$-orthonormal basis $\{w_1, \dots, w_m\}$ of $\Ker(\a)$ and note that $(Q^t)^2 = Q^t$ by a direct calculation. Thus, $Q^t$ is a projection operator with $\Range(Q^t) = \Ker(\a)$. It follows from the definition of $\HH^t$ that $\Ker(Q^t) = \HH^t$. Since $P^t$ is the complementary projection to $Q^t$ we have $\HH = \Range(P^t) \oplus \Ker(P^t) = \HH^t \oplus \Ker(\a)$.

\bigskip

(ii) Using the decomposition $\HH = \HH^s \oplus \Ker(\a)$ each vector in $\HH$ can be written as $v + w$ for some $v \in \HH^s$ and $w \in \Ker(\a)$. Expanding we have
\[\B^s(v+w) = \B^s(v) + 2\b^s(v,w) + \B^s(w) \leq \B^s(v),\]
since $\b^s(v,w) = 0$ and $\B^s \leq 0$ on $\Ker(\a)$. In particular, if $u \in \HH$ is any vector then 
\[\B^s(P^s u) \geq \B^s(u) \geq \B^t(u),\]
where the last inequality is just due to nonnegativity of $\C$.
\end{proof}

\begin{rem}
\label{rem:T}
It follows from Lemma \ref{PseudoMon} that the condition ``$t$ is sufficiently large" from Theorem \ref{SpecConv} and Proposition \ref{NegConv} can be replaced by the quantitative condition $t > T$. In fact, the proof of Lemma \ref{PseudoMon} shows that it suffices to require that $t$ is large enough that $\B^t$ is negative on $\Ker(\a)$ so that $\HH^t$ intersects $\Ker(\a)$ trivially. Analogous statements hold as $t \to -\infty$, that is, for Corollary \ref{cor:NegInfLim}.
\end{rem}

Now we state a result that will establish coercivity of $\A$ on $\K$ and $\HH^t$. Let $\V$ and $\W$ be closed subspaces of a Hilbert space with inner product $\langle \cdot, \cdot \rangle$ and norm $\lVert \cdot \rVert$. Define the quantities
\[\alpha(\V,\W) = \sup\{\langle v,w \rangle : v \in \V, w \in \W, \lVert v\rVert = \lVert w \rVert = 1\} \quad \text{and} \quad \beta(\V,\W) = \sqrt{1 - \alpha(\V,\W)^2},\]
which should be interpreted as the cosine and sine of the angle between $\V$ and $\W$, respectively. In the below theorem $\perp$ denotes the orthogonal complement with respect to the inner product on $\HH$.

\begin{prop}[\protect{\cite[Proposition 1]{Graser}}]
\label{thm:Graser}
Let $\HH$ be a Hilbert space and $\a(\cdot,\cdot) : \HH \times \HH \rightarrow \R$ a continuous symmetric bilinear form with $\dim(\Ker(\a)) < \infty$. If $\A(\cdot)$ is coercive on $\Ker(\a)^{\perp}$ with constant $\gamma > 0$ then $\A(\cdot)$ is coercive with constant $\gamma \cdot \beta(\V,\Ker(\a))^2 > 0$ on each closed subspace $\V$ of $\HH$ with $\V \cap \Ker(\a) = \{0\}$.
\end{prop}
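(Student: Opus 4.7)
\medskip

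\textbf{Proof Proposal.} Write $K = \Ker(\a)$ for brevity, and let $\V \subset \HH$ be a closed subspace with $\V \cap K = \{0\}$. My plan is to decompose each vector in $\V$ orthogonally as $v = v_1 + v_2$ with $v_1 \in K^{\perp}$ and $v_2 \in K$, bound $\A(v)$ from below using coercivity on $K^\perp$, and then control the loss from passing from $\|v_1\|$ to $\|v\|$ by the geometric quantity $\beta(\V,K)$.

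First I would observe that because $v_2 \in K$ annihilates $\a$ in each slot, the cross terms drop and $\A(v) = \A(v_1)$. The hypothesis that $\A$ is coercive on $K^{\perp}$ with constant $\gamma$ then gives $\A(v) \geq \gamma \|v_1\|^2$. The next step is the key geometric estimate: I would argue that for any unit vector $v \in \V$, the orthogonal component $v_2 \in K$ satisfies $\|v_2\| \leq \alpha(\V,K)$. This is immediate because, if $v_2 \neq 0$, then $w := v_2/\|v_2\|$ is a unit vector in $K$ and $\langle v,w\rangle = \|v_2\|$, so the sup defining $\alpha(\V,K)$ dominates $\|v_2\|$. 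Consequently $\|v_1\|^2 = 1 - \|v_2\|^2 \geq \beta(\V,K)^2$, and by homogeneity $\|v_1\| \geq \beta(\V,K)\|v\|$ for arbitrary $v \in \V$. Combining gives
\[
\A(v) \;=\; \A(v_1) \;\geq\; \gamma\|v_1\|^2 \;\geq\; \gamma\,\beta(\V,K)^2 \|v\|^2,
\]
which is the claimed coercivity.

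The remaining point, and the only place the hypotheses $\V \cap K = \{0\}$ and $\dim K < \infty$ are used crucially, is to show $\beta(\V,K) > 0$. I would argue by contradiction: if $\alpha(\V,K) = 1$, there exist unit vectors $v_n \in \V$ and $w_n \in K$ with $\langle v_n,w_n\rangle \to 1$. Since $K$ is finite-dimensional its unit sphere is compact, so $w_n \to w \in K$ with $\|w\|=1$ along a subsequence; then $\langle v_n,w\rangle \to 1$, and the identity $\|v_n - w\|^2 = 2 - 2\langle v_n,w\rangle \to 0$ forces $v_n \to w$ in norm. Closedness of $\V$ then places $w$ in $\V \cap K$, contradicting triviality of that intersection. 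The only mildly delicate step is this non-degeneracy argument, and finite-dimensionality of $K$ is what makes it routine; without it one would only have weak compactness of the unit ball and the norm convergence $v_n \to w$ would require extra work.
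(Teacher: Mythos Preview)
Your argument is correct. The decomposition $v = v_1 + v_2$ with $v_1 \in K^\perp$, $v_2 \in K$, the observation $\A(v) = \A(v_1)$, the bound $\|v_2\| \le \alpha(\V,K)$ via the inner product with $v_2/\|v_2\|$, and the compactness argument for $\beta(\V,K) > 0$ are all sound.

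Note, however, that the paper does not supply its own proof of this proposition: it is quoted directly from Gr\"aser \cite{Graser} and used as a black box in the proof of Lemma~\ref{MCoercivity}. So there is no ``paper's proof'' to compare against here. Your write-up is essentially the standard argument one would expect for such a statement, and it would serve well as a self-contained replacement for the citation.
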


\begin{lemma}[Moving Hilbert Space Coercivity]
\label{MCoercivity}\
If conditions (C1\SP) and (C3) hold then $\A(\cdot)$ is coercive on $\K$ and coercive on $\HH^t$ with a constant that is uniform in $t$ for $|t| > T$.
\end{lemma}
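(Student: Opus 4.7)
The plan is to apply Gr\"aser's Proposition \ref{thm:Graser} with $\V = \K$ and with $\V = \HH^t$, and then to verify that the resulting coercivity constant is bounded below uniformly in $t$.

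For $\K$: this subspace is closed by Lemma \ref{subspace}, and $\K \cap \Ker(\a) = \{0\}$ is part of (C1$'$). Proposition \ref{thm:Graser} immediately yields coercivity of $\A$ on $\K$ with constant $\gamma\,\beta(\K,\Ker(\a))^2 > 0$. For $\HH^t$ with $|t| > T$, Lemma \ref{PseudoMon}(i) provides the direct-sum decomposition $\HH = \HH^t \oplus \Ker(\a)$, hence $\HH^t \cap \Ker(\a) = \{0\}$, and $\HH^t$ is closed by Lemma \ref{subspace}; the proposition again gives coercivity with constant $\gamma\,\beta(\HH^t,\Ker(\a))^2$. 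The only remaining task is to bound this constant away from zero uniformly as $|t| \to \infty$.

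To show this, take $u \in \HH^t$ and decompose $u = v + w$ with $v \in \Ker(\a)^\perp$, $w \in \Ker(\a)$. Testing the defining relation $\b^t(u,\cdot)|_{\Ker(\a)} \equiv 0$ at $w$ gives $\b^t(w,w) = -\b^t(v,w)$. The argument in the proof of Lemma \ref{PseudoMon} yields the quantitative lower bound $|\b^t(w,w)| \geq (|t|\,m_{\C} - M_{|\B|})\,\lVert w \rVert^2$, while norm continuity of $\b$ and $\c$ (via their Riesz operators $B$ and $C$) bounds $|\b^t(v,w)| \leq (\lVert B\rVert + |t|\,\lVert C\rVert)\,\lVert v\rVert\,\lVert w\rVert$. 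Combining,
\[
\lVert w \rVert \leq \frac{\lVert B\rVert + |t|\,\lVert C\rVert}{|t|\,m_{\C} - M_{|\B|}}\,\lVert v \rVert.
\]
A direct derivative computation shows the right-hand coefficient is decreasing in $|t|$ on $(T,\infty)$ with limit $\lVert C\rVert/m_{\C}$, so it is bounded above by some finite constant $K$. Since $w \in \Ker(\a)$ we have $\A(u) = \A(v) \geq \gamma\,\lVert v\rVert^2 \geq \frac{\gamma}{1+K^2}\,\lVert u\rVert^2$, giving uniform coercivity on $\HH^t$.

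The main obstacle is exactly this uniformity of $\beta(\HH^t,\Ker(\a))$: a naive estimate using only continuity of $\b^t$ yields a coefficient growing linearly in $|t|$, and what rescues the argument is that the corresponding lower bound on $|\b^t(w,w)|$ for $w \in \Ker(\a)$ grows at the \emph{same} linear rate. That matching growth rate is precisely a consequence of coercivity of $\c$ on $\Ker(\a)$, which is guaranteed by the hypothesis $\Ker(\a)\cap\K=\{0\}$ in (C1$'$); no such subtlety arises for $\K$ since it is $t$-independent.
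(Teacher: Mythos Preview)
Your proof is correct and takes a genuinely different route from the paper's. Both arguments begin by invoking Proposition~\ref{thm:Graser} for $\K$ and for each fixed $\HH^t$, but they diverge on the uniformity step. The paper argues indirectly with the angle $\alpha(\HH^t,\Ker(\a))$ itself: it first shows the supremum defining $\alpha$ is attained (via weak compactness), then that $t\mapsto\alpha(\HH^t,\Ker(\a))$ is upper semicontinuous, and finally that $\limsup_{t\to\pm\infty}\alpha(\HH^t,\Ker(\a))<1$ by extracting weakly convergent subsequences of extremizers and using (C3) together with $\Ker(\a)\cap\K=\{0\}$ to force the weak limit into $\{0\}$. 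Your approach instead abandons the angle after the first invocation of Proposition~\ref{thm:Graser}: you decompose $u\in\HH^t$ orthogonally as $v+w$ with $v\in\Ker(\a)^\perp$, $w\in\Ker(\a)$, and directly bound $\lVert w\rVert/\lVert v\rVert$ by playing the linear-in-$|t|$ upper bound on $|\b^t(v,w)|$ against the linear-in-$|t|$ lower bound on $|\B^t(w)|$ coming from coercivity of $\C$ on $\Ker(\a)$. This yields an explicit uniform coercivity constant $\gamma/(1+K^2)$ and is both shorter and more elementary, needing only norm continuity of $\b,\c$ rather than the weak-compactness machinery. The paper's approach, by contrast, stays within the geometric framework of Proposition~\ref{thm:Graser} and makes transparent that the uniform constant is controlled by the angle between $\HH^t$ and $\Ker(\a)$, at the cost of a longer argument. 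One minor remark: your sentence ``decreasing in $|t|$ on $(T,\infty)$ with limit $\lVert C\rVert/m_{\C}$, so it is bounded above'' would read more cleanly if you noted that the bound comes from the value at $|t|=T$ (where the denominator equals $m_{\C}>0$), since a decreasing function is bounded above by its left-endpoint behavior, not its limit at infinity.
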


\begin{proof}
Suppose that $|t| > T$. By (C1$'$), $\Ker(\a) \cap \K$ is trivial and by part \emph{(i)} of Projection Lemma \ref{PseudoMon} $\Ker(\a) \cap \HH^t$ is trivial. Since $\dim(\Ker(\a)) < \infty$ and $\A(\cdot)$ is coercive on $\Ker(\a)^{\perp}$ by (C1$'$), Proposition \ref{thm:Graser} implies that $\A$ is coercive on $\K$ and on $\HH^t$ for each $t$. We will show that $\A$ has a uniform coercivity constant on $(-\infty,-T] \cup [T,\infty)$. It is sufficient to show that $\beta(\HH^t,\Ker(\a))$ is lower semicontinuous and $\liminf_{t \to \pm \infty} \beta(\HH^t,\Ker(\a)) > 0$.

First we will show that the supremum defining $\alpha(\HH^t,\Ker(\a))$ is attained for each $t$. Let $(u^n,w^n)$ be an extremizing sequence (with $\lVert u^n \rVert = \lVert v^n \rVert = 1$) and extract a weakly convergent subsequence $u^n \in \HH^t$ and a strongly convergent subsequence $w^n \in \Ker(\a)$ with limit $w$ (using that $\Ker(\a)$ is finite dimensional). If $u^n \rightharpoonup 0$ then $\alpha(\HH^t,\Ker(\a)) = 0$ and $\HH^t \subset \Ker(\a)^{\perp}$ so that $(u,w)$ is an extremizer for any unit norm $u \in \HH^t$. Otherwise, $u^n$ converges weakly to a nonzero $u$ and $(\frac{u}{\lVert u \rVert},w)$ is an extremizer. Thus, the supremum defining $\alpha(\cdot,\cdot)$ is attained.

For each $t$ let $(u_*^t,w_*^t) \in \HH^t \times \Ker(\a)$ be an extremizer so that $\alpha(\HH^t,\Ker(\a)) = \langle u_*^t,w_*^t \rangle$. To see that $t \mapsto \alpha(\HH^t,\Ker(\a))$ is upper semicontinuous let $\{t_n\}_n$ be an arbitrary sequence such that $t_n \to t_{\infty} \in \R$ as $n \to \infty$. Since $u_*^{t_n}$ and $w_*^{t_n}$ have unit norm we can assume (by extracting a subsequence) that $u_*^{t_n} \rightharpoonup u^{t_{\infty}}$ and $w_*^{t_n} \to w^{t_{\infty}}$ for some $(u^{t_{\infty}}, w^{t_{\infty}}) \in \HH^t \times \Ker(\a)$ because $\b^{t_n}(u_*^{t_n},\cdot) \to \b^t(u^{t_{\infty}},\cdot)$ by weak continuity. Thus, 
\[\limsup_{n \to \infty} \alpha(\HH^{t_n},\Ker(\a)) = \lim_{n \to \infty} \langle u_*^{t_n},w_*^{t_n} \rangle = \langle u^{t_{\infty}},w^{t_{\infty}} \rangle \leq \alpha(\HH^t,\Ker(\a)).\]
Hence $t \mapsto \beta(\HH^t,\Ker(\a))$ is lower semicontinuous.

To show $\liminf_{t \to \infty} \beta(\HH^t,\Ker(\a)) > 0$, let $t_n$ be a sequence with $t_n \to \pm \infty$ as $n \to \infty$ and extract a subsequence so that $u_*^{t_n} \rightharpoonup u^{\infty} \in \HH$ and $w_*^{t_n} \rightarrow w^{\infty} \in \Ker(\a)$. Thus, we have 
\[\limsup_{t \to \pm \infty} \alpha(\HH^t,\Ker(\a)) = \limsup_{n \to \infty} \langle u_*^{t_n},w_*^{t_n} \rangle = \langle u^{\infty},w^{\infty} \rangle.\]

Now we show that $\langle u^{\infty}, w^{\infty}\rangle < 1$. Note that $\lVert u^{\infty} \rVert \leq 1$ and $\lVert w^{\infty} \rVert = 1$. If $u^{\infty} \neq w^{\infty}$ we are done so assume that $u^{\infty} = w^{\infty}$. By definition of $\HH^t$ we know that 
\[\b(u_*^{t_n},w) = t_n \c(u_*^{t_n},w), \quad \text{for each} ~ w \in \Ker(\a).\]
By weak continuity, $\b(u_*^{t_n},w)$ is uniformly bounded in $n$ and $\c(u_*^{t_n},w) \to \c(u^{\infty},w)$ so that $\c(u^{\infty},w) = 0$ for each $w \in \Ker(\a)$. Thus, $\c(u^{\infty},u^{\infty}) = 0$ since $u^{\infty} = w^{\infty} \in \Ker(\a)$. Since $\C$ is nonnegative, Cauchy--Schwarz holds so that $\c(u^{\infty},v) \leq \sqrt{\C(u^{\infty})} \sqrt{\C(v)} = 0$ for each $v \in \HH$. This shows $u^{\infty} \in \Ker(\c) = \K$, and since $\Ker(\a) \cap \K$ is trivial we have $u^{\infty} = 0$.

\end{proof}

Although in what follows we work with the eigenvalues of $(\HH^t,\a,\b^t)$, the following lemma shows there is no loss of generality since these eigenvalues coincide with the nonzero eigenvalues of $(\HH,\a,\b^t)$.

\begin{lemma}[Moving eigenequation]
\label{lem:MEE}
Assume that $|t| > T$. If the moving Hilbert space conditions hold and $(\lambda^t,u^t) \in \R \times \HH^t$ satisfies
\begin{equation}
\label{eq:MEE}
\a(u^t,v) = \lambda^t \b^t(u^t,v), \quad \text{for all} ~ v \in \HH^t,
\end{equation}
then the equation holds for all $v \in \HH$. Consequently, $(\HH,\a,\b^t)$ and $(\HH^t,\a,\b^t)$ have the same nonzero eigenvalues, counting multiplicities.
\end{lemma}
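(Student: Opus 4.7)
The plan is to use the algebraic decomposition $\HH = \HH^t \oplus \Ker(\a)$ from part (i) of Projection Lemma \ref{PseudoMon}, which is available since $|t| > T$, and verify the eigenequation separately on each summand. Given arbitrary $v \in \HH$, write $v = v_1 + w$ with $v_1 \in \HH^t$ and $w \in \Ker(\a)$. The equation (\ref{eq:MEE}) applied to $v_1$ gives $\a(u^t,v_1) = \lambda^t \b^t(u^t,v_1)$ directly from the hypothesis. On the $\Ker(\a)$ component, both sides vanish: by symmetry of $\a$ we have $\a(u^t,w) = \a(w,u^t) = 0$ since $w \in \Ker(\a)$, and $\b^t(u^t,w) = 0$ precisely because $u^t \in \HH^t$. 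Adding these two identities and using bilinearity yields $\a(u^t,v) = \lambda^t \b^t(u^t,v)$ for every $v \in \HH$.

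For the consequence, the forward direction is immediate from what was just proved: any eigenpair $(\lambda^t,u^t)$ of $(\HH^t,\a,\b^t)$ is automatically an eigenpair of $(\HH,\a,\b^t)$, and the corresponding eigenspace in $\HH^t$ embeds into the eigenspace in $\HH$. For the reverse direction, suppose $(\lambda^t,u^t) \in (\R \setminus \{0\}) \times (\HH \setminus \{0\})$ satisfies $\a(u^t,v) = \lambda^t \b^t(u^t,v)$ for all $v \in \HH$. Testing against an arbitrary $w \in \Ker(\a)$ gives $0 = \a(w,u^t) = \a(u^t,w) = \lambda^t \b^t(u^t,w)$, and since $\lambda^t \neq 0$ we conclude $\b^t(u^t,w) = 0$. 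Thus $u^t \in \HH^t$, and restricting the test functions in the eigenequation to $\HH^t$ exhibits $(\lambda^t,u^t)$ as an eigenpair of $(\HH^t,\a,\b^t)$. In particular, the nonzero eigenspaces of the two problems coincide, so multiplicities agree.

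There is no serious obstacle here: the whole argument rests on the observation that $\Ker(\a)$ is a ``common kernel'' for the two sides of the eigenequation provided one already knows $u^t \in \HH^t$, together with the mild fact that nonzero-eigenvalue eigenvectors in $\HH$ must themselves lie in $\HH^t$. The only place where the hypothesis $|t| > T$ is used is to invoke Lemma \ref{PseudoMon}(i) for the direct-sum decomposition; without it, the decomposition $\HH = \HH^t \oplus \Ker(\a)$ could fail and the forward direction would not go through.
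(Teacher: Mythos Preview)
Your proof is correct and follows essentially the same approach as the paper: decompose $v \in \HH$ via $\HH = \HH^t \oplus \Ker(\a)$ from Lemma \ref{PseudoMon}(i), observe that both sides of the eigenequation vanish on the $\Ker(\a)$ component, and for the converse test against $w \in \Ker(\a)$ to force $u^t \in \HH^t$ when $\lambda^t \neq 0$. Your treatment of multiplicities (nonzero eigenspaces coincide) is slightly more explicit than the paper's, but the argument is the same.
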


\begin{proof}
As soon as $|t| > T$, for each $v \in \HH$ we have the decomposition $v = z + w \in \HH^t \oplus \Ker(\a)$ by the Projection Lemma \ref{PseudoMon}. Thus,
\[\a(u^{t},v) = \a(u^{t},z) = \lambda^{t} \b^t(u^{t},z) = \lambda^t \b^t(u^t,v), ~~ \text{ for each } v \in \HH,\]
so that (\ref{eq:MEE}) holds for all $v \in \HH$. 

Consequently, each eigenpair of $(\HH^t,\a,\b^t)$ is also an eigenpair of $(\HH,\a,\b^t)$. Conversely, for each eigenpair $(\lambda^t,u^t)$ of $(\HH,\a,\b^t)$ with $\lambda^t \neq 0$ we have $u^t \in \HH^t$ by choosing $v = w \in \Ker(\a)$, so that $(\lambda^t,u^t)$ is an eigenpair of $(\HH^t,\a,\b^t)$. 
\end{proof}


\subsection*{Variational characterizations}


We first state an inductive characterization of the eigenvalues due to Auchmuty \cite{Auch}. Suppose $\a$ and $\b$ are symmetric bilinear forms on an arbitrary Hilbert space $\HH$ and that the problem $(\HH,\a,\b)$ has $j-1$ $\a$-orthonormal eigenvectors whose span is denoted by $\U_{j-1}$. Let 
\begin{equation}
\label{eq:beta}
\beta_j = \sup\{\B(u) : u \in \U_{j-1}^{\perp_{\a}} ~ \text{and} ~ \A(u) = 1\} ~ \text{for} ~ j \geq 2,
\end{equation}
where $\perp_{\a}$ denotes the $\a$-orthogonal complement. For $j = 1$ we let $\beta_1 = \sup \{\B(u) : u \in \HH, \A(u) = 1\}$.
\begin{theorem}[Existence; \protect{\cite[Theorems 3.1 and 4.2]{Auch}}]
\label{thm:AuchExist}
Assume (C1)--(C3) hold. If $\B(u) > 0$ for some $u \in \HH$ then $\beta_1 > 0$, $\lambda_1(\HH,\a,\b)$ exists and equals $\beta_1^{-1}$, and there is an eigenvector of $(\HH,\a,\b)$ at which the supremum defining $\beta_1$ is attained. If $j \geq 1$ and $\U_{j-1}$ are as above, then either:
\begin{enumerate}
\item[(i)] $\beta_j = 0$ and $(\HH,\a,\b)$ has exactly $j-1$ positive eigenvalues and $\B \leq 0$ on $\U_{j-1}^{\perp_{\a}}$, or
\item[(ii)] $\beta_j > 0$ and $\lambda_j(\HH,\a,\b)$ exists, equals $\beta_j^{-1}$, and has an eigenvector at which the supremum defining $\beta_j$ is attained. 
\end{enumerate}
Thus, the positive eigenvalues of $(\HH,\a,\b)$ have the form 
\[0 < \lambda_1 \leq \lambda_2 \leq \dots,\]
where the number of positive eigenvalues may be zero, finite, or infinite. 

\end{theorem}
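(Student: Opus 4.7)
The plan is to use coercivity (C1) and continuity (C2) to treat $\a$ as defining an equivalent inner product on $\HH$, and then exploit the weak continuity of $\b$ from (C3) to produce extremizers of the constrained variational problem via weak compactness of the closed $\a$-unit ball. The Lagrange multiplier associated with each extremizer will yield the corresponding eigenpair. The whole argument is inductive in $j$, with the base and inductive cases virtually identical.

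For the base case, consider a maximizing sequence $u^n$ for $\beta_1 = \sup\{\B(u) : \A(u) = 1\}$. Conditions (C1) and (C2) make $\A(\cdot)^{1/2}$ equivalent to $\lVert \cdot \rVert$, so $\{u^n\}$ is norm-bounded and admits a weakly convergent subsequence $u^n \rightharpoonup u^*$. By (C3), $\B(u^n) \to \B(u^*) = \beta_1$, and weak lower semicontinuity of $\A$ gives $\A(u^*) \leq 1$. If some $u$ has $\B(u) > 0$, rescaling shows $\beta_1 > 0$; then $u^* \neq 0$, and if $\A(u^*) < 1$ the candidate $u^*/\sqrt{\A(u^*)}$ would give $\B$-value $\B(u^*)/\A(u^*) > \beta_1$, a contradiction. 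Hence $\A(u^*) = 1$. A one-parameter variation $\epsilon \mapsto \B(u^* + \epsilon v)/\A(u^* + \epsilon v)$ has derivative zero at $\epsilon = 0$, which gives $\b(u^*,v) = \beta_1 \a(u^*,v)$ for every $v \in \HH$; this is the eigenequation with $\lambda_1 = \beta_1^{-1}$.

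For the inductive step, suppose $\a$-orthonormal eigenvectors $u_1,\dots,u_{j-1}$ with positive eigenvalues have been constructed, and let $\U_{j-1} = \Span\{u_1,\dots,u_{j-1}\}$, a finite-dimensional and hence closed subspace. Then $\U_{j-1}^{\perp_{\a}}$ is closed in $\HH$, and repeating the base-case argument inside this subspace produces, whenever $\beta_j > 0$, an extremizer $u^* \in \U_{j-1}^{\perp_{\a}}$ with $\A(u^*) = 1$ satisfying $\b(u^*,v) = \beta_j \a(u^*,v)$ for all $v \in \U_{j-1}^{\perp_{\a}}$. To promote this to an identity on all of $\HH$, decompose an arbitrary $v \in \HH$ as $v = v_0 + \sum_{i=1}^{j-1} c_i u_i$ with $v_0 \in \U_{j-1}^{\perp_{\a}}$. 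On each $u_i$ both sides vanish: the right side vanishes by $\a$-orthogonality of $u^*$ to $\U_{j-1}$, while the left vanishes because $\b(u^*, u_i) = \lambda_i^{-1}\a(u^*, u_i) = 0$ from the eigenequation for $u_i$. If instead $\beta_j = 0$, then $\B \leq 0$ on the $\a$-unit sphere of $\U_{j-1}^{\perp_{\a}}$ (and hence on the whole subspace, by homogeneity), and no further positive eigenvalue can exist, since any such eigenvector $w$ with eigenvalue $\lambda > 0$ would satisfy $\B(w) = \lambda^{-1}\A(w) > 0$ and lie in $\U_{j-1}^{\perp_{\a}}$ by the orthogonality argument above. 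Monotonicity $\lambda_j \leq \lambda_{j+1}$ is then immediate from $\U_j^{\perp_{\a}} \subset \U_{j-1}^{\perp_{\a}}$, which forces $\beta_j \geq \beta_{j+1}$.

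The main obstacle I anticipate is the attainment of the constraint $\A(u^*) = 1$ after passing to a weak limit: weak lower semicontinuity only yields $\A(u^*) \leq 1$, and strict inequality would sabotage the Lagrange multiplier identification. The resolution, namely the rescaling argument above, requires $\beta_j > 0$, which is precisely why the dichotomy splits cleanly at $\beta_j = 0$. Everything else is bookkeeping: the $\a$-orthonormality of the new eigenvector to the previous ones is built in by construction, and the $\b$-orthogonality needed to pass from the restricted eigenequation to the full one is a one-line consequence of the previous eigenequations.
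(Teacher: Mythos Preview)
The paper does not actually prove this theorem; it is quoted as a known result from Auchmuty \cite[Theorems 3.1 and 4.2]{Auch} and is used as a black box throughout. Your proof is the standard direct-method argument (weak compactness plus weak continuity of $\B$ to get an extremizer, then a first-variation computation to extract the eigenequation), and it is essentially the approach Auchmuty takes in the cited reference, so there is no meaningful discrepancy to report.

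One small point worth tightening: your dichotomy is phrased as ``$\beta_j>0$ or $\beta_j=0$,'' but a priori the supremum could be negative. In the infinite-dimensional setting this cannot happen, since any $\a$-orthonormal sequence in $\U_{j-1}^{\perp_{\a}}$ converges weakly to zero and hence $\B$ along it tends to $0$ by (C3), forcing $\beta_j\geq 0$. In finite dimensions one should simply read case (i) as $\beta_j\leq 0$, which still gives $\B\leq 0$ on $\U_{j-1}^{\perp_{\a}}$ and the same conclusion. Also, in your ``exactly $j-1$ positive eigenvalues'' step, you should say explicitly that a hypothetical extra eigenvector can be taken $\a$-orthogonal to $\U_{j-1}$ (either because its eigenvalue is new, or by projecting within a repeated eigenspace); you gesture at this but do not spell it out.
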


The following two theorems due to Auchmuty are crucial tools for proving monotonicity of eigenvalues and our stability result Proposition \ref{prop:StabSpec}. The theorem below is a slight strengthening of \cite[Theorem 5.1]{Auch}.

\begin{theorem}[Variational Characterization; \protect{\cite[Theorem 5.1]{Auch}}]
\label{VC}
Assume that (C1)--(C3) hold for $(\HH,\a,\b)$ and let $i \geq 1$. If $\lambda_i$ is a positive eigenvalue of $(\HH,\a,\b)$ then
\[\frac{1}{\lambda_i} = \sup_{\Ss_i \subset \HH} \inf \{\B(u) : u \in \Ss_i \text{ and } \A(u) =1\} > 0,\]
where $\Ss_i$ ranges over all $i$-dimensional subspaces of $\HH$. Conversely, if the above supremum is positive then $\lambda_i$ exists and is positive.
\end{theorem}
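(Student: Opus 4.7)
The plan is to prove the two inequalities $1/\lambda_i \leq \sup_{\Ss_i}\inf$ and $1/\lambda_i \geq \sup_{\Ss_i}\inf$ separately via the inductive characterization of Theorem \ref{thm:AuchExist}, and then to deduce the converse by contrapositive. Throughout, conditions (C1) and (C2) make $\a(\cdot,\cdot)$ an inner product on $\HH$ equivalent to the original one, so $\a$-orthogonal projections onto finite-dimensional subspaces behave as expected.

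For the upper bound $1/\lambda_i \leq \sup_{\Ss_i}\inf$, I would choose $\Ss_i$ to be the span of the first $i$ $\a$-orthonormal eigenvectors $u_1,\dots,u_i$ corresponding to $\lambda_1 \leq \dots \leq \lambda_i$ furnished by Theorem \ref{thm:AuchExist}. Testing the eigenequation $\a(u_k,v) = \lambda_k \b(u_k,v)$ at $v=u_l$ yields $\b(u_k,u_l) = \delta_{kl}/\lambda_k$, so that for any $u = \sum c_k u_k$ with $\A(u) = \sum c_k^2 = 1$, a direct calculation gives $\B(u) = \sum c_k^2/\lambda_k \geq 1/\lambda_i$, since each $\lambda_k \leq \lambda_i$. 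For the reverse inequality, the key step is a finite-dimensional intersection argument: for any $i$-dimensional subspace $\Ss_i$, the $\a$-orthogonal projection $\Ss_i \to \U_{i-1}$ has $i$-dimensional domain and $(i-1)$-dimensional codomain, hence has nonzero kernel, which lies in $\Ss_i \cap \U_{i-1}^{\perp_{\a}}$. Normalizing such a vector $u$ to $\A(u) = 1$, the definition (\ref{eq:beta}) of $\beta_i$ yields $\B(u) \leq \beta_i = 1/\lambda_i$, so the infimum over $\Ss_i$ is at most $1/\lambda_i$, and taking the supremum preserves the bound.

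For the converse, I would argue by contrapositive: if $\lambda_i$ does not exist, then the supremum is nonpositive. Nonexistence of $\lambda_i$, by Theorem \ref{thm:AuchExist}, means there is a smallest $j \leq i$ with $\beta_j = 0$, in which case $\B \leq 0$ on $\U_{j-1}^{\perp_{\a}}$. Since $\dim \U_{j-1} = j-1 < i$, the same intersection argument produces a nonzero vector in $\Ss_i \cap \U_{j-1}^{\perp_{\a}}$ for every $i$-dimensional $\Ss_i$, which when normalized to $\A(u) = 1$ satisfies $\B(u) \leq 0$, forcing the infimum (and hence the supremum) to be nonpositive. The only real subtlety relative to the standard Courant--Fischer proof lies in the converse, where one must handle the general case $j \leq i$ rather than just $j = i$; the main obstacle is simply being careful that Auchmuty's inductive existence result supplies all the eigenstructure needed, since we do not have a spectral theorem for a compact self-adjoint operator directly at hand.
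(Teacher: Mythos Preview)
Your proposal is correct. The paper's own proof is much shorter: it simply cites \cite[Theorem 5.1]{Auch} for the variational characterization itself and only supplies a proof of the converse statement, whereas you prove the characterization from scratch via the standard Courant--Fischer two-inequality argument. For the converse, the paper argues by forward induction on $i$ (if the sup is positive at level $i$, then by the inductive hypothesis $\U_{i-1}$ exists, and dimension counting gives a nonzero vector in $\widehat{\Ss_i} \cap \U_{i-1}^{\perp_{\a}}$ on which $\B>0$, so $\beta_i>0$), while you argue by contrapositive (if $\lambda_i$ fails to exist, find the first $j \leq i$ with $\beta_j=0$ and intersect with $\U_{j-1}^{\perp_{\a}}$); these are logically equivalent and both hinge on the same finite-dimensional intersection argument, so the core idea is the same.
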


\begin{proof}
The variational characterization itself is precisely \cite[Theorem 5.1]{Auch}. To see the converse statement we proceed by induction. Suppose that $i = 1$. Observe that if the supremum is positive then $\B$ is positive somewhere on the $\a$-unit sphere of $\HH$, and so $\lambda_1$ exists by Theorem \ref{thm:AuchExist}. 

Suppose that the result holds for $i-1$. Observe that if the supremum is positive then $\B$ is positive on the $\a$-unit sphere of some $i$-dimensional subspace $\widehat{\Ss_i}$. Let $\U_{i-1}$ denote the $(i-1)$-dimensional subspace spanned by the first $i-1$ eigenvectors of the problem $(\HH,\a,\b)$, which exists by the inductive hypothesis and Theorem \ref{thm:AuchExist}. By dimension counting, $\widehat{\Ss_i} \cap \U_{i-1}^{\perp_{\a}}$ is nontrivial. Thus, $\B$ is positive somewhere on $\U_{i-1}^{\perp_{\a}}$ so that $\lambda_i$ exists by Theorem \ref{thm:AuchExist}.
\end{proof}

Since $\A$ is coercive on $\HH^t$ by Lemma \ref{MCoercivity}, Theorem \ref{VC} may be applied to the problem $(\HH^t,\a,\b^t)$ for each $t$ with $|t| > T$. To prove bounds on and monotonicity of eigenvalues of $(\HH^t,\a,\b^t)$ it will be useful to expand the supremum in the variational characterization to a collection of subspaces that is independent of $t$.

\begin{theorem}[Moving Hilbert Space Variational Characterization]
\label{MVC}
Assume that the moving Hilbert space conditions hold, $t > T$, and $i \geq 1$. If $\lambda_i^t = \lambda_i(\HH^t,\a,\b^t)$ exists then 
\[\frac{1}{\lambda_i^t} = \hspace{-4mm} \sup_{\Ss_i : \Ss_i \cap \Ker(\a) = \{0\}} \hspace{-6mm} \inf \{\B^t(u) : u \in \Ss_i \text{ and } \A(u) = 1\} > 0,\]
where $\Ss_i$ ranges over $i$-dimensional subspaces of $\HH$. Conversely, if the supremum above is positive for some $i$ then $\lambda_i^t$ exists and is positive.
\end{theorem}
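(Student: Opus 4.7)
The plan is to reduce the moving Hilbert space variational characterization to the standard one (Theorem \ref{VC}) applied to $(\HH^t,\a,\b^t)$, transporting arbitrary admissible subspaces of $\HH$ into $\HH^t$ via the projection $P^t : \HH \to \HH^t$ from Projection Lemma \ref{PseudoMon}. Since $t > T$, Lemma \ref{MCoercivity} guarantees that $\A$ is coercive on $\HH^t$, so Theorem \ref{VC} yields
\[
\frac{1}{\lambda_i^t} = \sup_{\Ss_i' \subset \HH^t} \inf\{\B^t(v) : v \in \Ss_i', \, \A(v) = 1\}
\]
whenever $\lambda_i^t$ exists, together with the analogous converse for positivity.

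For the lower bound in the desired characterization, I would note that $\HH^t \cap \Ker(\a) = \{0\}$ by part (i) of Projection Lemma \ref{PseudoMon}, so every $i$-dimensional subspace of $\HH^t$ is admissible in the moving Hilbert space supremum, which is therefore bounded below by $1/\lambda_i^t$. For the upper bound, fix an admissible $i$-dimensional $\Ss_i \subset \HH$ with $\Ss_i \cap \Ker(\a) = \{0\}$. Because $P^t = I - Q^t$ and $Q^t$ projects onto $\Ker(\a)$, one has $\Ker(P^t) = \Ker(\a)$, so $P^t$ is injective on $\Ss_i$ and $P^t(\Ss_i)$ is an $i$-dimensional subspace of $\HH^t$. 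For each $u \in \Ss_i$, decomposing $u = P^t u + Q^t u$ with $Q^t u \in \Ker(\a)$ gives
\[
\A(P^t u) = \A(u) \quad \text{and} \quad \B^t(u) = \B^t(P^t u) + \B^t(Q^t u) \leq \B^t(P^t u),
\]
using that $\a$ annihilates $\Ker(\a)$, that $\b^t(P^t u, Q^t u) = 0$ since $P^t u \in \HH^t$, and that $\B^t \leq 0$ on $\Ker(\a)$ when $t > T$ by Projection Lemma \ref{PseudoMon}(i). Thus $P^t$ bijects the $\A$-unit sphere of $\Ss_i$ with that of $P^t(\Ss_i)$ while only increasing $\B^t$, so the infimum of $\B^t$ over the $\A$-unit sphere of $\Ss_i$ is bounded above by the corresponding infimum over $P^t(\Ss_i)$, which by Theorem \ref{VC} is at most $1/\lambda_i^t$. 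Taking the supremum over admissible $\Ss_i$ completes the equality.

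For the converse, the same projection computation applied without assuming $\lambda_i^t$ exists a priori shows that positivity of the moving Hilbert space sup forces positivity of the $\HH^t$-sup in Theorem \ref{VC}, and the converse half of that theorem then produces $\lambda_i^t > 0$. The only step that requires care is verifying that $P^t$ preserves both the $\A$-unit sphere condition and the dimension of $\Ss_i$ simultaneously; both reduce cleanly to the direct sum decomposition $\HH = \HH^t \oplus \Ker(\a)$ from the Projection Lemma and to the $\a$-invariance of $\Ker(\a)$, so I do not expect any serious obstacle beyond bookkeeping.
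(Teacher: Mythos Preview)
Your proposal is correct and follows essentially the same route as the paper: reduce to Theorem~\ref{VC} on $(\HH^t,\a,\b^t)$, use $\HH^t \cap \Ker(\a) = \{0\}$ for one inequality, and for the other push an admissible $\Ss_i$ into $\HH^t$ via $P^t$, exploiting $\A(P^t u) = \A(u)$ and $\B^t(P^t u) \geq \B^t(u)$. The only cosmetic difference is that you expand the inequality $\B^t(P^t u) \geq \B^t(u)$ directly from the decomposition, whereas the paper simply invokes part~(ii) of Projection Lemma~\ref{PseudoMon}.
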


\begin{proof}

By the original variational characterization in Theorem \ref{VC}, it is enough to show that 
\begin{equation}
\label{eq:VCEquality}
\sup_{\Ss_i \subset \HH^t} \inf \{\B^t(u) : u \in \Ss_i \text{ and } \A(u) =1\} = \hspace{-2mm} \sup_{\Ss_i : \Ss_i \cap \Ker(\a) = \{0\}} \hspace{-6mm} \inf \{\B^t(u) : u \in \Ss_i \text{ and } \A(u) = 1\}.
\end{equation}

The left side is at most the right since if $\Ss_i \subset \HH^t$ then $\Ss_i \cap \Ker(\a) = \{0\}$ because $\HH^t \cap \Ker(\a) = \{0\}$ for $t > T$ by Lemma \ref{PseudoMon}.

To see the opposite inequality let $\widehat{\Ss_i} \subset \HH$ be an $i$-dimensional subspace with the property that $\widehat{\Ss_i} \cap \Ker(\a)$ is trivial. Recall that $P^t$ is a projection onto $\HH^t$ with $\Ker(P^t) = \Ker(\a)$. Thus, the subspace $P^t \widehat{\Ss_i}$ is also $i$-dimensional since $\widehat{\Ss_i} \cap \Ker(P^t)$ is trivial. Therefore, $P^t \widehat{\Ss_i}$ is a valid trial subspace for the left side of (\ref{eq:VCEquality}) so that the left side is larger than
\begin{align}
&\inf\{\B^t(u) : u \in P^t \widehat{\Ss_i}, \A(u) = 1\}\notag\\
= &\inf\{\B^t(P^t v) : v \in \widehat{\Ss_i}, \A(v) = 1\}\notag\\
\geq &\inf\{\B^t(v) : v \in \widehat{\Ss_i}, \A(v) = 1\}\label{eq:MVC}.
\end{align}
In the second to last step we used that $\A(P^t v) = \A(v)$ by writing $v$ into the decomposition $\HH = \HH^t \oplus \Ker(\a)$ and using that $P^t$ is a projection on $\HH^t$. In the final step we used that $\B^t \circ P^t \geq \B^t$ from the Projection Lemma \ref{PseudoMon}. Taking a supremum over all such $\widehat \Ss_i$ in (\ref{eq:MVC}) shows that the two suprema are equal. 

If (\ref{eq:MVC}) is positive then the left side of (\ref{eq:VCEquality}) is also positive by the above calculation and so the converse statement in the theorem holds. 

\end{proof}

\subsection*{Proof of Existence of spectrum: Theorem \ref{thm:ExistSpec}}

The decomposition in Theorem \ref{thm:ExistSpec} was originally stated in \cite{Auch}, with an erroneous definition of $\U_{\infty}$ that Auchmuty later corrected in \cite{AuchWeb}. We reprove the corrected result below. 

\begin{proof}

The existence of the positive eigenvalues of $(\HH,\a,\b)$ follows from the Existence Theorem \ref{thm:AuchExist}. The existence of the negative spectrum follows from applying Theorem \ref{thm:AuchExist} to the problem $(\HH,\a,-\b)$ and observing that $\lambda_{-j}(\HH,\a,\b) = -\lambda_j(\HH,\a,-\b)$.

To see the decomposition result let $\perp_{\a}$ denote the $\a$-orthogonal complement and recall that $\U_+$ and $\U_-$ are the closed spans of the eigenvectors with positive ($+$) and negative ($-$) eigenvalues, respectively. Observe that by the eigenequation eigenvectors with distinct eigenvalues are $\a$-orthogonal so that $\U_+$ and $\U_-$ are $\a$-orthogonal and intersect trivially. We proceed by showing $\Ker(\b) = (\U_+ \oplus_{\a} \U_-)^{\perp_{\a}}$. For the forward inclusion let $u \in \Ker(\b)$ and take $v = u$ in the eigenequation so that
\[\a(u_{\pm j},u) = \lambda_{\pm j} \b(u_{\pm j},u), \quad \text{for each} ~ j \geq 1,\]
where $u_{\pm j}$ are eigenvectors. Since $u \in \Ker(\b)$ we have $\a(u_{\pm j},u) = 0$ for each $j \geq 1$ so that $u \in (\U_+ \oplus_{\a} \U_-)^{\perp_{\a}}$. 

To see the ``$\supset$" inclusion, let $u \in (\U_+ \oplus_{\a} \U_-)^{\perp_{\a}}$ and $v \in \HH$. When $v = u_{\pm j}$ is an eigenvector, $\b(u,v) = 0$ by the eigenequation since $u \perp_{\a} u_{\pm j}$. By linearity and weak continuity of $\b(\cdot,\cdot)$ we have $\b(u,v) = 0$ for each $v \in \U_+ \oplus_{\a} \U_-$. Observe that $\B \leq 0$ on $\U_+^{\perp_{\a}}$ and $\B \geq 0$ on $\U_-^{\perp_{\a}}$ due to the Existence Theorem \ref{thm:AuchExist} so that $\B = 0$ on $(\U_+ \oplus_{\a} \U_-)^{\perp_{\a}}$. If $v \in (\U_+ \oplus_{\a} \U_-)^{\perp_{\a}}$ then $u + v \in (\U_+ \oplus_{\a} \U_-)^{\perp_{\a}}$ so that
\[0 = \B(u+v) = \B(u) + \B(v) + 2\b(u,v) = 2\b(u,v).\]
Thus, $\b(u,v) = 0$ for each $v \in \HH$ so that $u \in \Ker(\b)$.
\end{proof}

\section{\textbf{Monotonicity and Convergence Lemmas}}

Recall that $J_+$ is the number of positive eigenvalues of $(\K,\a,\b)$ and let $\lambda^t_j = \lambda_j(\HH,\a,\b^t)$. To prove our convergence results, first we show that when $j \leq J_+$ each $\lambda_j^t$ has a limit as $t \to \infty$. By using the eigenequation this convergence will help us show that the limit of each eigenvector is in $\K$. 

\begin{lemma}[Eigenvalue Monotonicity and Limit]
\label{LimExists}
If the fixed or moving Hilbert space conditions hold (and $t > T$ in the moving case) and $j \geq 1$ then $\lambda^t_j$ is increasing in $t$ whenever it exists. If, in addition, $1 \leq j \leq J_+$ then:
\begin{enumerate}
\item[(i)] $\lambda^t_j \leq \lambda_j(\K,\a,\b)$,
\item[(ii)] $\lim_{t \to \infty} \lambda^t_j$ and $\lim_{t \nearrow s} \lambda_j^t$ exist and are at most $\lambda_j(\K,\a,\b)$ for each $s \in \R$ ($s > T$ in the moving Hilbert space case).
\end{enumerate}

\end{lemma}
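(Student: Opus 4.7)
The plan is to exploit the variational characterizations of positive eigenvalues: Theorem~\ref{VC} in the fixed case and Theorem~\ref{MVC} in the moving case. Both express $1/\lambda_j^t$ as a supremum, over a family of $j$-dimensional trial subspaces \emph{independent of~$t$}, of the infimum of $\B^t$ on the $\A$-unit sphere of the trial subspace. Since $\B^t(u) = \B(u) - t\,\C(u)$ and $\C \geq 0$, the map $t \mapsto \B^t(u)$ is non-increasing pointwise. Hence, for each admissible $\Ss_j$, the inner infimum is non-increasing in~$t$, and so is the outer supremum. This gives $t \mapsto 1/\lambda_j^t$ non-increasing, and therefore $t \mapsto \lambda_j^t$ non-decreasing on its domain of existence. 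In the moving case this argument is valid for $t > T$, where Theorem~\ref{MVC} applies thanks to Lemma~\ref{MCoercivity}.

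For~(i) the key observation is that any $j$-dimensional subspace $\Ss_j \subset \K$ is an admissible trial subspace for the $t$-problem. In the fixed case there is no extra condition to check, while in the moving case condition~(C1\SP) gives $\K \cap \Ker(\a) = \{0\}$, which forces $\Ss_j \cap \Ker(\a) = \{0\}$. Moreover, since $\K = \Ker(\c)$ and $\c$ is nonnegative, Cauchy--Schwarz forces $\C \equiv 0$ on $\K$, so $\B^t \equiv \B$ on $\Ss_j$. Applying Theorem~\ref{VC} to the triple $(\K,\a,\b)$ itself---valid because $\A$ is coercive on $\K$ by~(C1) or by Lemma~\ref{MCoercivity}, respectively---yields
\[
\frac{1}{\lambda_j(\K,\a,\b)} \;=\; \sup_{\Ss_j \subset \K} \inf\bigl\{\B(u) : u \in \Ss_j,\, \A(u) = 1\bigr\} \;>\; 0,
\]
since $j \leq J_+$ by assumption. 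Using the same collection of $\Ss_j$ as trial subspaces in the variational characterization of $1/\lambda_j^t$ then gives $1/\lambda_j^t \geq 1/\lambda_j(\K,\a,\b) > 0$. The converse assertions of Theorems~\ref{VC} and~\ref{MVC} then guarantee that $\lambda_j^t$ exists, and the bound is precisely $\lambda_j^t \leq \lambda_j(\K,\a,\b)$.

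Part~(ii) is immediate from~(i) and the monotonicity established in the first paragraph: $t \mapsto \lambda_j^t$ is non-decreasing and bounded above by $\lambda_j(\K,\a,\b)$, so $\lim_{t \to \infty} \lambda_j^t$ and $\lim_{t \nearrow s} \lambda_j^t$ both exist and do not exceed $\lambda_j(\K,\a,\b)$.

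The main subtle point I anticipate is the bookkeeping in the moving Hilbert space setting: one must verify that trial subspaces drawn from $\K$ are admissible in Theorem~\ref{MVC}, that $\A$ is coercive on $\K$ so that the variational characterization for $(\K,\a,\b)$ is legitimate, and that the monotonicity argument is carried out only for $t > T$, where Theorem~\ref{MVC} is available. All three hinge on condition~(C1\SP) together with Lemma~\ref{MCoercivity}.
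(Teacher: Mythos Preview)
Your proposal is correct and follows essentially the same route as the paper: monotonicity via the pointwise decrease of $\B^t$ fed through the variational characterizations of Theorems~\ref{VC} and~\ref{MVC}, then the bound~(i) by restricting trial subspaces to $\K$ (where $\B^t=\B$), with the moving-case admissibility handled by $\K\cap\Ker(\a)=\{0\}$. One minor remark: the vanishing of $\C$ on $\K$ is immediate from the definition $\K=\Ker(\c)$ (take $v=u$), so the Cauchy--Schwarz justification is unnecessary there.
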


\begin{proof}

Observe that by Proposition \ref{prop:StabSpec}, if $\lambda_j^t$ exists for some $t$ then it exists an open interval around $t$ so it makes sense to say that $\lambda_j^t$ is increasing on this set. Since $\B^t$ is decreasing on $\HH$, the variational characterizations in Theorems \ref{VC} and \ref{MVC} show that $\lambda^t_j$ is increasing in $t$ (for $t > T$ in the moving Hilbert space case). In the fixed Hilbert space case, using the variational characterization and that $\C = 0$ on $\K$ shows 
\begin{align}
\frac{1}{\lambda^t_j} &= \sup_{\Ss_j \subset \HH} \inf \{\B^t(u) : u \in \Ss_j \text{ and } \A(u) = 1\}\notag\\
&\geq \sup_{\Ss_j \subset \K} \inf \{\B^t(u) : u \in \Ss_j \text{ and } \A(u) = 1\}\notag\\
&= \sup_{\Ss_j \subset \K} \inf \{\B^0(u) : u \in \Ss_j \text{ and } \A(u) = 1\}\notag\\
&= \frac{1}{\lambda_j(\K,\a,\b)}.\notag
\end{align}

The same holds in the moving Hilbert space case by imposing that $\Ss_j \cap \Ker(\a)$ is trivial in the first supremum and noting that $\K \cap \Ker(\a)$ is also trivial.

Taking reciprocals we see that $\lambda^t_j$ is bounded from above by $\lambda_j(\K,\a,\b)$ and is non-decreasing so the limit exists and is at most $\lambda_j(\K,\a,\b)$. 

\end{proof}

Note that when the fixed Hilbert space conditions hold, $\a(\cdot,\cdot)$ is an inner product on $\HH$ that induces a norm equivalent to $\lVert \cdot \rVert$ . When the moving Hilbert space conditions hold it only induces a seminorm on $\HH$. The following lemma summarizes some facts that hold for $\a(\cdot,\cdot)$ in either the fixed or moving Hilbert space case. In what follows ``weakly convergent" and ``$\rightharpoonup$" will mean weakly convergent with respect to the inner product on $\HH$. Recall the lower triangle inequality for $\a(\cdot,\cdot)$: $\left| \sqrt{\A(u)} - \sqrt{\A(v)} \right| \leq \sqrt{\A(u - v)}$ for each $u,v \in \HH$.
\begin{lemma}[$\a$-Lemma]
\label{lem:ALem}
Assume that $\a(\cdot,\cdot): \HH \times \HH \to \R$ is a symmetric bilinear form. If $\a(\cdot,\cdot)$ satisfies either (C1) or (C1\SP), and (C2), then: 
\begin{enumerate}
    \item[(i)] Cauchy--Schwarz and the lower triangle inequality hold for $\a(\cdot,\cdot)$;
    \item[(ii)] If $v^n \rightharpoonup v$  then $\a(v^n,u) \to \a(v,u)$ for each $u \in \HH$;
    \item[(iii)] If $v^n \rightharpoonup v$ and $\A(v^n) \to \A(v)$ then $\A(v^n - v) \to 0$;
    \item[(iv)] If $\a(v_1^n,v_2^n) = 0$ for each $n$ and $v^n_1$ and $v^n_2$ converge in the $\sqrt{\A(\cdot)}$ (semi)norm to $v_1$ and $v_2$ respectively, then $A(v_1,v_2) = 0$;
    \item[(v)]$\sqrt{\A(\cdot)}$ is weakly sequentially lower semi-continuous on $(\HH,\lVert \cdot \rVert )$.
\end{enumerate}
\end{lemma}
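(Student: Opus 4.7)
The plan is to first establish the common foundation: under either (C1) or (C1$'$), the quadratic form $\A$ is positive semidefinite on $\HH$, which turns $\a(\cdot,\cdot)$ into a (possibly degenerate) semi-inner product. Under (C1) this is immediate from coercivity. Under (C1$'$) I would use that $\Ker(\a)$ is finite-dimensional, hence closed, to decompose any $u \in \HH$ as $u = u_1 + u_2$ with $u_1 \in \Ker(\a)$ and $u_2 \in \Ker(\a)^{\perp}$; then the cross term $\a(u_1, u_2)$ and the diagonal term $\A(u_1)$ both vanish by definition of the kernel, leaving $\A(u) = \A(u_2) \geq \gamma \lVert u_2 \rVert^2 \geq 0$.

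Part (i) then follows from the classical Cauchy--Schwarz argument for positive semidefinite symmetric forms: expand $0 \leq \A(u - s v)$ as a quadratic in $s \in \R$ and either take the discriminant when $\A(v) > 0$, or note that linear non-negativity of the expression in $s$ forces $\a(u,v) = 0$ when $\A(v) = 0$. The lower triangle inequality follows from Cauchy--Schwarz by expanding $\A(u - v) = \A(u) - 2\a(u,v) + \A(v)$ and bounding the cross term via $|\a(u,v)| \leq \sqrt{\A(u) \A(v)}$.

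For parts (ii)--(v), I would use only (C2), Cauchy--Schwarz, and bilinearity. Part (ii) holds because continuity from (C2) gives $|\a(u,v)| \leq M \lVert u \rVert \lVert v \rVert$, so for each fixed $u$ the map $v \mapsto \a(v, u)$ is a bounded linear functional on $(\HH, \lVert \cdot \rVert)$; weak convergence preserves bounded linear functionals by definition. Part (iii) is immediate by expanding
\[ \A(v^n - v) = \A(v^n) - 2\a(v^n, v) + \A(v) \]
and combining part (ii) with the hypothesis $\A(v^n) \to \A(v)$. Part (iv) follows from the telescoping identity $\a(v_1^n, v_2^n) - \a(v_1, v_2) = \a(v_1^n - v_1, v_2^n) + \a(v_1, v_2^n - v_2)$, bounding each term by Cauchy--Schwarz and using that $\sqrt{\A(v_2^n)}$ stays bounded as it converges to $\sqrt{\A(v_2)}$. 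Part (v) is the standard argument: by (ii) and Cauchy--Schwarz,
\[ \A(v) = \lim_n \a(v^n, v) \leq \liminf_n \sqrt{\A(v^n) \A(v)}, \]
and dividing by $\sqrt{\A(v)}$ when it is positive (the case $\A(v) = 0$ being trivial) yields the desired lower semicontinuity.

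There is no serious obstacle here. The only subtlety worth attention is that under (C1$'$) the form $\sqrt{\A(\cdot)}$ is only a seminorm rather than a norm, so arguments involving Cauchy--Schwarz or division must be phrased to tolerate $\A(v) = 0$. Once the positive semidefiniteness is carefully verified via the orthogonal decomposition in the first step, the remaining parts are just standard semi-inner-product manipulations.
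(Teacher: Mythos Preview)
Your proposal is correct and, for parts (i)--(iv), essentially matches the paper's approach: the paper also reduces (i) to the fact that $\sqrt{\A(\cdot)}$ is a (semi)norm, handles (ii) exactly as you do via (C2), and remarks that (iii) and (iv) follow ``as if $\a(\cdot,\cdot)$ were an inner product,'' which is precisely the expansion and telescoping you wrote out.

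The one genuine difference is part (v). The paper argues more structurally: it restricts to $\Ker(\a)^{\perp}$, where $\sqrt{\A(\cdot)}$ is a norm equivalent to $\lVert\cdot\rVert$ by coercivity, uses that equivalent norms share the same weakly convergent sequences so that the standard weak lower semicontinuity of a Hilbert norm applies there, and then extends to all of $\HH$ via the decomposition $\HH = \Ker(\a)^{\perp} \oplus \Ker(\a)$. Your route---applying (ii) to get $\A(v) = \lim_n \a(v^n,v)$ and then Cauchy--Schwarz to bound this by $\liminf_n \sqrt{\A(v^n)}\sqrt{\A(v)}$---is shorter and more self-contained, since it uses only the semi-inner-product structure already established in (i) and (ii) and avoids any appeal to norm equivalence or the orthogonal splitting. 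The paper's argument, on the other hand, makes the role of the coercivity hypothesis in (C1$'$) more visible. Both are valid; yours is the more elementary.
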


\begin{proof}
To see (ii) simply note that $v \mapsto \a(v,u)$ is a continuous linear functional on $\HH$ for each $u \in \HH$, by assumption (C2). 

Statement (i) can be proved by appealing to the fact that $\sqrt{\A(\cdot)}$ is a norm on $\Ker(\a)^{\perp}$. Statements (iii) and (iv) can be proved as if $\a(\cdot,\cdot)$ were an inner-product on $\HH$. 

To see (v) note that $\sqrt{\A(\cdot)}$ is a norm on $\Ker(\a)^{\perp}$ (where $\perp$ is with respect to the $\HH$-inner product) so that it is weakly sequentially lower semi-continuous (w.s.l.s.c.)\ on $(\Ker(\a)^{\perp},\sqrt{\A(\cdot)})$. Since $\sqrt{\A(\cdot)}$ and $\lVert \cdot \rVert$  are equivalent norms they have the same set of continuous linear functionals, and therefore, the same set of weakly convergent sequences. Hence $\sqrt{\A(\cdot)}$ is w.s.l.s.c.\ on $(\Ker(\a)^{\perp},\lVert \cdot \rVert )$, and thus on all of $\HH$, as follows from using $\HH = \Ker(\a)^{\perp} \oplus \Ker(\a)$ and the definition of w.s.l.s.c.
\end{proof}

In what follows let $\lambda_j^{t_{\infty}} \in (0,+\infty]$ denote $\lim_{n \to \infty} \lambda_j^{t_n}$ for $j \geq 1$, where $\{t_n\}_n \subset \R$ is a sequence such that $t_n \nearrow t_{\infty} \in (-\infty,+\infty]$ as $n \to \infty$. In the moving Hilbert space case we impose that $t_{\infty} > T$.

\begin{lemma}[Strong Convergence]
\label{lem:Strong}
Assume the fixed or moving Hilbert space conditions hold, $j \geq 1$,  and $\{u_j^{t_n}\}_n$ is a sequence of $\HH$-normalized eigenvectors of $(\HH,\a,\b^{t_n})$ with weak limit $u_j^{t_{\infty}} \in \HH$ and positive eigenvalues $\lambda_j^{t_n}$. If $\lambda_j^{t_{\infty}} < \infty$ then:
\begin{enumerate}
    \item[(i)] $\A(u_j^{t_n} - u_j^{t_\infty}) \to 0$, $\lVert u_j^{t_n} - u_j^{t_\infty} \rVert \to 0$, and $u_j^{t_\infty} \neq 0$.
    \item[(ii)] If $t_{\infty} < \infty$ then
\[\a(u_j^{t_\infty},v) = \lambda_j^{t_\infty}\b^{t_{\infty}}(u_j^{t_\infty},v), \quad \text{for all}~ v \in \HH,\]
and if $t_{\infty} = \infty$ then $u^{t_{\infty}}_j \in \K$ and
\[\a(u_j^{t_\infty},v) = \lambda_j^{t_\infty}\b(u_j^{t_\infty},v), \quad \text{for all}~ v \in \K.\]
\end{enumerate}
On the other hand, if $\lambda_j^{t_{\infty}} = \infty$ and $t_{\infty} = \infty$ then:
\begin{enumerate}[resume]
    \item[(iii)] $u_j^{t_\infty} \in \Ker(\b|_{\K \times \K}) \subset \K$.
\end{enumerate}

\end{lemma}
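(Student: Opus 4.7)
The idea is to exploit three test choices in the eigenequation $\a(u_j^{t_n},v)=\lambda_j^{t_n}\b^{t_n}(u_j^{t_n},v)$: $v=u_j^{t_n}$ (self-testing), $v=u_j^{t_{\infty}}$ (to compare with the limit), and $v$ ranging over $\HH$ or $\K$ (to obtain the limiting equation). The key ingredients are weak continuity of $\b,\c$ from (C3), continuity of $\a$ from (C2), the sign condition $\C\geq 0$, and Cauchy--Schwarz for $\c$, which yields $\c(u,v)=0$ whenever $v\in\K$. In the moving Hilbert space case I first extend the eigenequation from $\HH^{t_n}$ to all of $\HH$ via Lemma \ref{lem:MEE}; this is legitimate for $n$ large since either $t_n\to t_{\infty}>T$ or $t_n\to\infty$.

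\textbf{Limiting eigenequation (part (ii)).} When $t_{\infty}<\infty$, passing to the limit in the extended eigenequation tested against arbitrary $v\in\HH$ is direct: (C2) makes $\a(\cdot,v)$ a continuous linear functional, and (C3) gives $\b^{t_n}(u_j^{t_n},v)\to\b^{t_{\infty}}(u_j^{t_{\infty}},v)$ since $t_n$ is bounded. When $t_{\infty}=\infty$, I first show $u_j^{t_{\infty}}\in\K$. Self-testing yields $\B(u_j^{t_n})-t_n\C(u_j^{t_n})=\A(u_j^{t_n})/\lambda_j^{t_n}$, whose right side is bounded by (C2), normalization, and $\lambda_j^{t_{\infty}}<\infty$; combined with weak continuity of $\B$ this bounds $t_n\C(u_j^{t_n})$, so $\C(u_j^{t_n})\to 0$, hence $\C(u_j^{t_{\infty}})=0$ by weak continuity of $\c$, and Cauchy--Schwarz for $\c$ gives $u_j^{t_{\infty}}\in\Ker(\c)=\K$. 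For any $v\in\K$, Cauchy--Schwarz again produces $\c(u_j^{t_n},v)=0$, so the eigenequation reduces to $\a(u_j^{t_n},v)=\lambda_j^{t_n}\b(u_j^{t_n},v)$, and passing to the limit gives the desired equation on $\K$.

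\textbf{Strong convergence (part (i)).} By (iii) of Lemma \ref{lem:ALem}, it suffices to establish $\A(u_j^{t_n})\to\A(u_j^{t_{\infty}})$. For $t_{\infty}<\infty$ this follows directly from $\A(u_j^{t_n})=\lambda_j^{t_n}\B^{t_n}(u_j^{t_n})\to\lambda_j^{t_{\infty}}\B^{t_{\infty}}(u_j^{t_{\infty}})=\A(u_j^{t_{\infty}})$, using the limit equation with $v=u_j^{t_{\infty}}$. The main obstacle is the case $t_{\infty}=\infty$, where $t_n\C(u_j^{t_n})$ resists direct passage to the limit. I handle it by a sandwich: weak lower semi-continuity of $\sqrt{\A}$ (Lemma \ref{lem:ALem}(v)) gives $\A(u_j^{t_{\infty}})\leq\liminf_n\A(u_j^{t_n})$, while $\C\geq 0$ gives the upper bound
\[
\A(u_j^{t_n})=\lambda_j^{t_n}\bigl(\B(u_j^{t_n})-t_n\C(u_j^{t_n})\bigr)\leq\lambda_j^{t_n}\B(u_j^{t_n})\longrightarrow\lambda_j^{t_{\infty}}\B(u_j^{t_{\infty}})=\A(u_j^{t_{\infty}}),
\]
the last equality from testing the limit equation against $u_j^{t_{\infty}}\in\K$. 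Thus $\A(u_j^{t_n}-u_j^{t_{\infty}})\to 0$. In the fixed case, (C1) converts this to $\lVert u_j^{t_n}-u_j^{t_{\infty}}\rVert\to 0$ immediately; in the moving case I decompose $u_j^{t_n}-u_j^{t_{\infty}}=p_n+k_n$ along $\Ker(\a)^{\perp}\oplus\Ker(\a)$, apply coercivity of $\A$ on $\Ker(\a)^{\perp}$ from (C1$'$) to control $p_n$, and use that $k_n\to 0$ strongly because $\Ker(\a)$ is finite-dimensional (where weak and strong convergence coincide). Unit norm of $u_j^{t_n}$ and the reverse triangle inequality then give $\lVert u_j^{t_{\infty}}\rVert=1\neq 0$.

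\textbf{Blow-up case (part (iii)).} Here $\lambda_j^{t_n}\to\infty$ and $\A(u_j^{t_n})$ is bounded by (C2), so $\B^{t_n}(u_j^{t_n})=\A(u_j^{t_n})/\lambda_j^{t_n}\to 0$. Since $\B(u_j^{t_n})$ is bounded by weak continuity, $t_n\C(u_j^{t_n})$ is bounded, and as $t_n\to\infty$ with $\C\geq 0$ we get $\C(u_j^{t_n})\to 0$, yielding $u_j^{t_{\infty}}\in\K$ as before. For $v\in\K$, the identity $\c(u_j^{t_n},v)=0$ reduces the eigenequation to $\b(u_j^{t_n},v)=\a(u_j^{t_n},v)/\lambda_j^{t_n}$, whose right side tends to zero by (C2) and divergence of $\lambda_j^{t_n}$, while weak continuity sends the left side to $\b(u_j^{t_{\infty}},v)$. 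Hence $\b(u_j^{t_{\infty}},v)=0$ for every $v\in\K$, i.e., $u_j^{t_{\infty}}\in\Ker(\b|_{\K\times\K})$.
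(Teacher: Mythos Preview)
Your proof is correct and follows the same overall architecture as the paper's: pass to the limit in the eigenequation via weak continuity of $\b,\c$ and continuity of $\a(\cdot,v)$, then sandwich $\A(u_j^{t_n})$ between the weak lower semicontinuity bound and the upper bound $\A(u_j^{t_n})\leq\lambda_j^{t_n}\B(u_j^{t_n})$ coming from $\C\geq 0$, finally invoking Lemma~\ref{lem:ALem}(iii).

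Two minor technical differences are worth a remark. First, to prove $u_j^{t_\infty}\in\K$ when $t_\infty=\infty$, you self-test to get $\C(u_j^{t_n})\to 0$ and then use Cauchy--Schwarz for $\c$; the paper instead tests the rearranged eigenequation against an arbitrary $v\in\HH$ to obtain the ``draining estimate'' $|\c(u_j^{t_n},v)|\lesssim_v 1/t_n$ directly. Both reach the same conclusion. Second, to upgrade $\A$-convergence to $\HH$-norm convergence in the moving case, you decompose $u_j^{t_n}-u_j^{t_\infty}$ along $\Ker(\a)^\perp\oplus\Ker(\a)$ and use finite-dimensionality of $\Ker(\a)$ on the second piece; the paper instead appeals in one line to ``uniform coercivity.'' Your decomposition is arguably the more transparent route here, since $u_j^{t_n}-u_j^{t_\infty}$ need not lie in any single $\HH^t$ on which Lemma~\ref{MCoercivity} directly applies.
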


\noindent Note that part (iii) of the above lemma does not claim that $u^{t_n}_j$ converges strongly, so $u^{t_{\infty}}_j$ may be zero. 

\begin{proof} Since the index $j$ on the eigenvalue and eigenvector is fixed we will drop it for notational ease. When $t_{\infty} = \infty$ we will write $u^{\infty} = u^{t_\infty}$ and $\lambda^{\infty} = \lambda^{t_\infty}$.

\bigskip

(i)/(ii) \textbf{Case 1: $(t_{\infty} = \infty)$:} To prove (i) and (ii), assume $\lambda^{\infty} < \infty$. By rearranging the eigenequation, in either the fixed or moving Hilbert space case  (Lemma \ref{lem:MEE}) for every $t_n > T$ we have
\[t_n \c(u^{t_n},v) = \b(u^{t_n},v) - \frac{1}{\lambda^{t_n}} \a(u^{t_n},v), \quad \text{for all} \quad v \in \HH.\]

The right side is bounded since $\a(u^{t_n},v),\b(u^{t_n},v),$ and $1/\lambda^{t_n}$ are each bounded by weak convergence of $u^{t_n}$, part (ii) of the $\a$-Lemma \ref{lem:ALem}, and Monotonicity Lemma \ref{LimExists}. This proves the ``draining estimate"
\begin{equation}
\label{eq:DrainInq}
|\c(u^{t_n},v)| \lesssim_v \frac{1}{t_n}.
\end{equation}
Taking the limit as $n \to \infty$ and using weak continuity of $\c(\cdot,\cdot)$, we have that $\c(u^{\infty},v) = \lim_{n \rightarrow \infty} \c(u^{t_n},v) = 0$ for each $v \in \HH$. This shows that $u^{\infty} \in \Ker(\c) = \K$.

Examining the eigenequation again we have 
\begin{equation}
\label{eq:EE}
\a(u^{t_n},v) = \lambda^{t_n} \b^{t_n}(u^{t_n},v), ~~ \text{ for each } v \in \HH,
\end{equation}
by the moving eigenequation Lemma \ref{lem:MEE}. Choosing $v \in \K = \Ker(\c)$, the eigenequation reduces to
\begin{equation}
\label{eq:EEResK}
\a(u^{t_n},v) = \lambda^{t_n} \b(u^{t_n},v), ~~ \text{ for each } v \in \K.
\end{equation}
Recall that $\lambda^{\infty} < \infty$ by assumption. By weak convergence, taking $n \rightarrow \infty$ we obtain
\begin{equation}
\label{eq:LimEigEq1}
\a(u^{\infty},v) = \lambda^{\infty} \b(u^{\infty},v), ~ \text{ for each } ~ v \in \K.
\end{equation}

To see strong convergence holds in the (semi)norm $\sqrt{\A(\cdot)}$, recall that the $\a$-Lemma \ref{lem:ALem} says:
\begin{equation}
\label{eq:ConvImp}
u^{t_n} \rightharpoonup u^{t_\infty} ~ \text{and} ~ \A(u^{t_n}) \rightarrow \A(u^{t_\infty}) \implies \A(u^{t_n} - u^{t_\infty}) \rightarrow 0.
\end{equation}
We proceed by checking $\A(u^{t_n}) \rightarrow \A(u^{\infty})$. Since $\sqrt{\A}$ is weakly sequentially lower semi-continuous by the $\a$-Lemma \ref{lem:ALem}, we know
\begin{equation}
\label{eq:wlsc}
\liminf_{n \rightarrow \infty} \A(u^{t_n}) \geq \A(u^{\infty}).
\end{equation}

On the other hand, using the eigenequation and nonnegativity of $\lambda^{t_n} t_n\C$ we have that
\[\A(u^{t_n}) = \lambda^{t_n} [\B(u^{t_n}) - t_n \C(u^{t_n})] \leq \lambda^{t_n}\B(u^{t_n}).\]
Taking the limsup of both sides, then using weak continuity of $\B(\cdot)$ and (\ref{eq:LimEigEq1}) with $v = u_j^{\infty} \in \K$ we have that
\begin{equation}
\label{eq:LimSup}
\limsup_{n \rightarrow \infty} \A(u^{t_n}) \leq \lim_{n \rightarrow \infty} \lambda^{t_n}\B(u^{t_n}) = \lambda^{\infty} \B(u^{\infty}) = \A(u^{\infty}).
\end{equation}

Putting (\ref{eq:wlsc}) and (\ref{eq:LimSup}) together implies $\A(u^{t_n}) \to \A(u^{\infty})$, and now (\ref{eq:ConvImp}) gives that $\A(u^{t_n} - u^{\infty}) \to 0$. The remaining statements of part (i) will be proven at the end of Case 2 below. 

\bigskip

\textbf{Case 2: $(t_{\infty} < \infty)$:} Taking $n \to \infty$ in (\ref{eq:EE}) shows that $\lambda_j^{t_{\infty}}$ and $u_j^{t_{\infty}}$ satisfy the the first equation in (ii) by weak continuity. To see that strong convergence holds in the (semi)norm $\sqrt{\A(\cdot)}$ when $t_{\infty} < \infty$, we again use (\ref{eq:ConvImp}) and proceed by checking $\A(u^{t_n}) \to \A(u^{t_{\infty}})$. Since (\ref{eq:wlsc}) stil holds, we only have to check $\limsup_{n \rightarrow \infty} \A(u^{t_n}) \leq \A(u^{t_\infty})$. Let $v = u^{t_n}$ in (\ref{eq:EE}) and take the limsup of both sides so that
\begin{equation*}
\limsup_{n \to \infty} \A(u^{t_n}) = \limsup_{n \to \infty} \lambda^{t_n} \B^{t_n}(u^{t_n}) = \lambda^{t_{\infty}} \B^{t_{\infty}}(u^{t_{\infty}}),
\end{equation*}
by weak continuity of $\B^t(\cdot)$.
After taking $v = u^{t_{\infty}}$ in (\ref{eq:EE}), sending $n \to \infty$, and using weak convergence we have
\[\A(u^{t_{\infty}}) = \lim_{n \to \infty} \a(u^{t_n},u^{t_{\infty}}) = \lim_{n \to \infty} \lambda^{t_n} \b^{t_n}(u^{t_n},u^{t_{\infty}}) = \lambda^{t_{\infty}} \B^{t_{\infty}}(u^{t_{\infty}}),\]
so that $\limsup_{n \to \infty} \A(u^{t_n}) = \A(u^{t_{\infty}})$ and hence $\A(u^{t_n}) \to \A(u^{t_{\infty}})$. The remainder of the proof is identical to the case when $t_{\infty} = \infty$.

\bigskip

It remains to show that $\lVert u^{t_n} - u^{t_{\infty}} \rVert \to 0$ as $n \to \infty$ and $u^{t_{\infty}} \neq 0$. Uniform coercivity implies that $\lVert u^{t_n} - u^{t_{\infty}} \rVert^2  \lesssim \A(u^{t_n} - u^{t_{\infty}}) \to 0$ as $n \to \infty$. Since $u^{t_n}$ is $\HH$-normalized this shows that $\lVert u^{t_{\infty}} \rVert = \lim_{n \to \infty} \lVert u^{t_n} \rVert = 1$ so that $u^{t_{\infty}} \neq 0$.

\bigskip

(iii) Now assume that $\lambda^{\infty} = \infty$ and $t_{\infty} = \infty$. By using (\ref{eq:EEResK}) and that $\a(u^{t_n},v)$ is bounded in $n$ for each $v \in \K$ we have
\[|\b(u^{t_n},v)| \lesssim_v \frac{1}{\lambda^{t_n}} \to 0,\]
so that by weak continuity $\b(u^{\infty},v) = 0$ and hence $u^{\infty} \in \Ker(\b|_{\K \times \K})$.

\end{proof}

\begin{rem}
\label{rem:CDrain}
The draining estimate (\ref{eq:DrainInq}) is what establishes $(\K,\a,\b)$ as a candidate for the limiting problem since it shows that limits of eigenvectors are in $\K$. Moreover, when the fixed Hilbert space condition holds and $\B$ is positive somewhere on $\HH$, a similar argument refines the $\C$-draining bound (\ref{eq:DrainInq}) to the more explicit bound: $\C(u^{t_n}) \leq \mu/t_n$, where $\mu = [\lambda_1(\HH,\a,\b)]^{-1} > 0$.
\end{rem}

In the following lemma let $t_n \to \infty$ as $n \to \infty$ and $\lambda_{-j}^{\infty} = \lim_{n \to \infty} \lambda_{-j}^{t_n}$.

\begin{lemma}[Convergence of eigenvectors with negative eigenvalue]
\label{lem:ConNEv}
Assume the fixed or moving Hilbert space conditions hold, $j \geq 1$, and  $\{u_{-j}^{t_n}\}_n$ is a sequence of weakly convergent eigenvectors of $(\HH,\a,\b^{t_n})$ with weak limit $u_{-j}^{\infty} \in \HH$ and negative eigenvalues $\lambda_{-j}^{t_n}$. 
\begin{enumerate}
    \item[(i)] If $\lambda_{-j}^{\infty} < 0$ then $u_{-j}^{\infty} \in \K$ and
    \begin{equation}
    \label{eq:NegEE}
    \a(u_{-j}^{\infty},v) = \lambda_{-j}^{\infty} \b(u_{-j}^{\infty},v), \quad \text{for all}~ v \in \K.
    \end{equation}
    \item[(ii)] If $\lambda_{-j}^{\infty} = 0$ then $u_{-j}^{\infty} \in \Ker(\c)^{\perp_{\a}}$.
\end{enumerate}
\end{lemma}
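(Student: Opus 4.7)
The plan is to mirror the proof of the Strong Convergence Lemma \ref{lem:Strong}, which established the analogous result for positive eigenvalues. The starting point in both parts is the eigenequation
\[\a(u_{-j}^{t_n},v) = \lambda_{-j}^{t_n} \b^{t_n}(u_{-j}^{t_n},v) = \lambda_{-j}^{t_n}\b(u_{-j}^{t_n},v) - t_n \lambda_{-j}^{t_n} \c(u_{-j}^{t_n},v), \quad \text{for all}~ v \in \HH,\]
which holds in the fixed Hilbert space case directly, and in the moving Hilbert space case by the Moving Eigenequation Lemma \ref{lem:MEE} (since $t_n > T$ eventually). Since $\{u_{-j}^{t_n}\}_n$ is weakly convergent it is bounded, so by part (ii) of the $\a$-Lemma \ref{lem:ALem} and the weak continuity of $\b(\cdot,\cdot)$, the quantities $\a(u_{-j}^{t_n},v)$ and $\b(u_{-j}^{t_n},v)$ are uniformly bounded in $n$ for each fixed $v \in \HH$.

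For part (i), assume $\lambda_{-j}^\infty < 0$, so $|\lambda_{-j}^{t_n}|$ is bounded and bounded away from zero for large $n$. Rearranging the eigenequation yields
\[|\c(u_{-j}^{t_n},v)| = \frac{1}{|t_n \lambda_{-j}^{t_n}|}\bigl|\lambda_{-j}^{t_n}\b(u_{-j}^{t_n},v) - \a(u_{-j}^{t_n},v)\bigr| \lesssim_v \frac{1}{t_n},\]
which is the $\c$-draining inequality analogous to \eqref{eq:DrainInq}. Passing to the limit and invoking weak continuity of $\c(\cdot,\cdot)$ gives $\c(u_{-j}^\infty,v) = 0$ for every $v \in \HH$, so $u_{-j}^\infty \in \Ker(\c) = \K$. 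To obtain \eqref{eq:NegEE}, restrict the eigenequation to test vectors $v \in \K$; since $\c(u_{-j}^{t_n},v) = 0$ when $v \in \K$ (using symmetry of $\c$, or equivalently the definition of $\K$), the equation collapses to $\a(u_{-j}^{t_n},v) = \lambda_{-j}^{t_n}\b(u_{-j}^{t_n},v)$, and sending $n \to \infty$ using weak continuity of $\a$ and $\b$ together with $\lambda_{-j}^{t_n} \to \lambda_{-j}^\infty$ yields the desired limiting eigenequation on $\K$.

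For part (ii), assume $\lambda_{-j}^\infty = 0$. Again restrict the eigenequation to $v \in \K$, where it simplifies to
\[\a(u_{-j}^{t_n},v) = \lambda_{-j}^{t_n}\b(u_{-j}^{t_n},v), \quad \text{for all}~ v \in \K.\]
The right-hand side tends to zero since $\lambda_{-j}^{t_n} \to 0$ while $\b(u_{-j}^{t_n},v) \to \b(u_{-j}^\infty,v)$ is bounded, and the left-hand side tends to $\a(u_{-j}^\infty,v)$ by part (ii) of the $\a$-Lemma \ref{lem:ALem}. Hence $\a(u_{-j}^\infty,v) = 0$ for every $v \in \K = \Ker(\c)$, which is precisely the statement $u_{-j}^\infty \in \Ker(\c)^{\perp_{\a}}$.

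The arguments are quite direct, so the main thing to be careful about is the validity of the full-space eigenequation in the moving Hilbert space case, which requires $t_n > T$ and an appeal to Lemma \ref{lem:MEE}; this is harmless since we may discard finitely many terms of the sequence. Note that, unlike the positive eigenvalue situation in Lemma \ref{lem:Strong}, no strong convergence or nontriviality of $u_{-j}^\infty$ is asserted — and indeed $u_{-j}^\infty$ may be zero in case (ii), consistent with the blow-up/accumulation behavior identified in Proposition \ref{NegConv}.
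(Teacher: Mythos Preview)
Your proof is correct and follows essentially the same approach as the paper's: both parts rely on the eigenequation, the draining estimate for part (i), and restricting to $v \in \K$ before passing to the limit via weak continuity. Your version is simply more explicit than the paper's, which refers back to the argument of Lemma \ref{lem:Strong} rather than writing out the details again.
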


\begin{proof}
(i) Since $\lim_{n \to \infty} \lambda_{-j}^{t_n} < 0$ we know that $1/\lambda_{-j}^{t_n}$ is bounded so the same argument that proves the draining estimate (\ref{eq:DrainInq}) shows that $u_{-j}^{\infty} \in \K$. That $\lambda_{-j}^{\infty}$ and $u_{-j}^{\infty}$ satisfy (\ref{eq:NegEE}) follows from the argument that proves the $t_{\infty} = \infty$ case of (ii) in the proof of Lemma \ref{lem:Strong}.

(ii) By choosing $v \in \Ker(\c)$ in the eigenequation for $u_{-j}^{t_n}$ we find that 
\[\a(u_{-j}^{t_n},v) = \lambda_{-j}^{t_n} \b(u_{-j}^{t_n},v), \quad \text{for all} ~ v \in \Ker(\c).\]
Since the $\b(u_{-j}^{t_n},v)$ is bounded in $n$ and $\lambda_{-j}^{t_n} \to 0$ as $n \to \infty$ we have $\a(u_{-j}^{\infty},v) = 0$ for each $v \in \Ker(\c)$ by weak convergence. This shows that $u_{-j}^{\infty}$ is in the $\a$-orthogonal complement of $\Ker(\c)$.
\end{proof}

\section{\textbf{Proofs of Main Results}}

We will prove our convergence and continuity results. The following Lemma will show that the limit of $\lambda_j(\HH^t,\a,\b^t)$ as $t \to \infty$ is an eigenvalue of $(\K,\a,\b)$.

\begin{lemma}[Containment of the Spectrum]
\label{SpecCont}
Assume the fixed or moving Hilbert space conditions hold and $j \geq 1$. If $\{u^t_j\}_{t > 0}$ is a collection of $\HH$-normalized eigenvectors with eigenvalues $\lambda^t_j$ such that $\lim_{t \to \infty} \lambda^t_j < \infty$ then:
\begin{enumerate}
\item[(i)] a subsequence $u^{t_n}$ exists that converges in norm to some $u^{\infty} \in \K$, and 
\item[(ii)] $(u^{\infty}, \lambda^{\infty})$ is an eigenpair of $(\K,\a,\b)$, where $\lambda^{\infty} = \lim_{t \to \infty} \lambda^t$. 
\end{enumerate}

\end{lemma}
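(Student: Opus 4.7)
The plan is to reduce the statement directly to the Strong Convergence Lemma \ref{lem:Strong}, which has already established the delicate weak-to-strong passage and the limiting eigenequation on $\K$. All that remains is to extract a weak limit candidate and verify that its hypotheses apply with $t_\infty = \infty$.

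First I would fix an arbitrary sequence $t_n \nearrow \infty$. Since $\{u_j^{t_n}\}_n$ is $\HH$-normalized and $\HH$ is reflexive, a subsequence (still written $u_j^{t_n}$) converges weakly to some $u^\infty \in \HH$. By the hypothesis together with Monotonicity Lemma \ref{LimExists}, we have $\lambda_j^{t_n} \to \lambda^\infty < \infty$, and in the moving Hilbert space case $t_n > T$ holds for large $n$, so Lemma \ref{lem:Strong} is applicable with $t_\infty = \infty$.

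Part (i) of Lemma \ref{lem:Strong} then gives simultaneously $\A(u_j^{t_n} - u^\infty) \to 0$, $\lVert u_j^{t_n} - u^\infty \rVert \to 0$, and $u^\infty \neq 0$. This strong convergence and the nontriviality of the limit establish part (i) of the present lemma. The containment $u^\infty \in \K$ is delivered by part (ii) of Lemma \ref{lem:Strong} in the case $t_\infty = \infty$, where the draining estimate \eqref{eq:DrainInq} forces $\c(u^\infty, v) = 0$ for every $v \in \HH$.

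For part (ii), the same Lemma \ref{lem:Strong}(ii) with $t_\infty = \infty$ yields
\[
\a(u^\infty, v) = \lambda^\infty \b(u^\infty, v), \quad \text{for all } v \in \K,
\]
which is exactly the eigenequation for the limiting triple $(\K, \a, \b)$. Combined with $u^\infty \in \K \setminus \{0\}$, this shows $(u^\infty, \lambda^\infty)$ is a genuine eigenpair. The only conceptual issue I anticipate is purely bookkeeping: in the moving Hilbert space case one must check that $u^\infty$ is allowed as an element of $\K$ (rather than of some $\HH^t$), but this is automatic since $\K \subset \HH^t$ for every $t > T$ by the characterization \eqref{eq:Hinf} of $\HH^\infty$ and the inclusion $\K \cap \Ker(\a) = \{0\}$. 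Consequently no independent argument is needed beyond invoking the previous lemma on the chosen subsequence.
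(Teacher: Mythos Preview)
Your proposal is correct and follows essentially the same route as the paper's proof: extract a weakly convergent subsequence from the bounded family and invoke the Strong Convergence Lemma \ref{lem:Strong} to obtain norm convergence, a nonzero limit in $\K$, and the limiting eigenequation.

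One small caveat: your closing parenthetical remark is both unnecessary and not quite right. The limiting eigenproblem is $(\K,\a,\b)$ in both the fixed and moving settings, so there is no bookkeeping issue to resolve---membership of $u^\infty$ in $\K$ is exactly what Lemma \ref{lem:Strong}(ii) gives. Moreover, the claimed inclusion $\K \subset \HH^t$ is not generally true: for $u \in \K$ one has $\b^t(u,w) = \b(u,w)$, and nothing forces $\b(u,w) = 0$ for $w \in \Ker(\a)$. This does not affect the validity of your main argument, but the remark should simply be dropped.
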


\begin{proof}
Since $\{u^t\}_{t > 0}$ is bounded we can extract a weakly convergent subsequence $\{u^{t_n}\}_n$. By the Strong Convergence Lemma \ref{lem:Strong} this subsequence converges strongly with a nonzero limit $u^{\infty} \in \K$ and $u^{\infty}$ satisfies
\begin{equation}
\label{eq:LimEigEq}
\a(u^{\infty},v) = \lambda^{\infty} \b(u^{\infty},v) ~~ \text{ for each } v \in \K.
\end{equation}
Since $u^{\infty}$ is nonzero, it is a genuine eigenvector of equation (\ref{eq:LimEigEq}) and $(u^{\infty}, \lambda^{\infty})$ is an eigenpair of $(\K,\a,\b)$. 
\end{proof}

\subsection*{Proof of Theorem \ref{SpecConv}: Convergence of Spectrum as \texorpdfstring{$t \to + \infty$}{Lg}}

\begin{proof}
Part (i). ($j \leq J_+$). Let $\lambda_j^{\infty} = \lim_{t \rightarrow \infty} \lambda_j^{t}$ for each $j = 1, \dots, J_+$. Recall that the limit exists by the Monotonicity Lemma \ref{LimExists}. 

\bigskip

\textbf{Base Case: $(j = 1)$:} By Lemma \ref{LimExists} we have $\lambda_1^t \leq \lambda_1(\K)$ so that $\lambda_1^{\infty} \leq \lambda_1(\K)$. By Lemma \ref{SpecCont} we know $\lambda_1^{\infty}$ is an eigenvalue of $(\K,\a,\b)$ so we must have that $\lambda_1^{\infty} = \lambda_1(\K)$.

\bigskip

In preparation for the inductive step, we generalize the Strong Convergence Lemma to give strong convergence of a collection of eigenvectors. In what follows, $u$ and $v$ are said to be \emph{$\a$-orthonormal} if $\a(u,v) = 0$ and $\A(u) = \A(v) = 1$. Suppose $\{u_1^t, \dots, u_{j+1}^t\}$ is a set of $j+1$ $\a$-orthonormal eigenvectors of $(\HH,\a,\b^t)$ generated by the inductive procedure defining $\beta_{j+1}$ in (\ref{eq:beta}) with eigenvalues $\lambda_i^t$ such that $\lim_{t \to \infty} \lambda_i^t < \infty$ for $i = 1, \dots, j+1$.

By Lemma \ref{lem:Strong} part (i), we can iteratively extract subsequences from $\{u_i^t\}_{t > 0}$ for $i = 1, \dots, j+1$, to produce a common subsequence such that each eigenvector converges strongly to an eigenvector $u_i^{t_\infty} \in \HH$ along this common subsequence. Call the common subsequence $t_n$, let 
\begin{equation}
\label{eq:Ut}
\mathscr{U}_j^{t_n} = \{u_1^{t_n}, \dots, u_j^{t_n}\}, \quad \text{for each} ~ n \geq 1,
\end{equation}
and denote the set of limits by $\mathscr{U}^{t_\infty}_j$. Since $\mathscr{U}_j^{t_n} \cup \{u_{j+1}^{t_n}\}$ is an $\a$-orthonormal set for each $n$, by the $\a$-Lemma \ref{lem:ALem} part (iv) we know $\mathscr{U}^{t_\infty}_j \cup \{u_{j+1}^{t_\infty}\}$ is still an $\a$-orthonormal set. When $t_{\infty} = \infty$ we will write $\mathscr{U}^{\infty}_j$ for $\mathscr{U}^{t_\infty}_j$ and $u_i^{\infty}$ for $u_i^{t_{\infty}}$.

\bigskip

\textbf{Inductive Step: $(1 \leq j < J_+)$:} Assume $J_+ \geq 2$, $1 \leq j < J_+$, and that $\lambda_i^t \nearrow \lambda_i(\K)$ for each $i = 1, \dots, j$ as $t \to \infty$. We will show that $\lambda_{j+1}^t \nearrow \lambda_{j+1}(\K)$. By Lemma \ref{LimExists},
\[\lambda_j^{t} \leq \lambda_{j+1}^{t} \leq \lambda_{j+1}(\K).\]
Taking limits and using the inductive assumption $\lambda_j^{\infty} = \lambda_j(\K)$ we obtain
\begin{equation*}
\lambda_j(\K) \leq \lambda_{j+1}^{\infty} \leq \lambda_{j+1}(\K).
\end{equation*}
If $\lambda_j(\K) = \lambda_{j+1}(\K)$ then we are done, so assume they are different. Lemma \ref{SpecCont} implies $\lambda_{j+1}^{\infty}$ equals either $\lambda_j(\K)$ or $\lambda_{j+1}(\K)$. Now we show that $\lambda_{j+1}^{\infty} \neq \lambda_j(\K)$.

Let $\mathscr{U}^{t_n}_j$ and $u_{j+1}^{t_n}$ be constructed as in (\ref{eq:Ut}) and $E_j \subset \K$ denote the eigenspace associated to $\lambda_j(\K)$. By the inductive assumption and that $\lambda_j(\K) \neq \lambda_{j+1}(\K)$, we know $E_j \subset \Span \mathscr{U}_j^{\infty}$. Since $u_{j+1}^{\infty}$ is $\a$-orthogonal to $\Span \mathscr{U}_j^{\infty}$, it is also $\a$-orthogonal to $E_j$. Thus, $u_{j+1}^{\infty} \not\in E_j$ and $\lambda_{j+1}^{\infty} \neq \lambda_j(\K)$ so that $\lambda_{j+1}^{\infty} = \lambda_{j+1}(\K)$. 

\bigskip

Part (ii). \textbf{$(j > J_+)$:} Now suppose $J_+ < \infty$, $j > J_+$, and that $\lambda_j^{t_*}$ exists with an $\a$-normalized eigenvector $u_j^{t_*}$. Let
\[t_j = \inf\{t > t_* : \lambda_j^t ~ \text{does not exist}\}.\]

Note that $t_* < t_j$ by Proposition \ref{prop:StabSpec}. Our goal is to show that $\lambda_j^t \nearrow + \infty$ as $t \nearrow t_j$. Let $\{u_1^{t_n}, \dots, u_j^{t_n}\}$ be as in (\ref{eq:Ut}) for an aribitrary sequence $t_n  \nearrow t_j$.

\bigskip

\textbf{Case 1:} Suppose that $t_j = \infty$ so that $\lambda_j^t$ exists for all large $t$. Since $\mathscr{U}_j^{\infty}$ is an $\a$-orthonormal set, by the inductive step $\{u_1^{\infty}, \dots, u_{J_+}^{\infty}\}$ is a maximal linearly independent set of eigenvectors of $(\K,\a,\b)$ with positive eigenvalues. Now suppose, towards a contradiction, that $\lambda_j^{\infty} < \infty$. Then $u_j^{\infty}$ is $\a$-orthogonal to $\Span\{u_1^{\infty}, \dots, u_{J_+}^{\infty}\}$ so that $u_j^{\infty}$ is not an eigenvector of $(\K,\a,\b)$ with positive eigenvalue. On the other hand, Lemma \ref{lem:Strong} implies $u_j^{\infty}$ is a nonzero element of $\K$ and satisfies
\[\a(u_j^{\infty},v) = \lambda_j^{\infty}\b(u_j^{\infty},v), \quad \text{for all}~ v \in \K.\]
This is a contradiction because we have shown that $\{u_1^{\infty}, \dots, u_{J_+}^{\infty},u_j^{\infty}\}$ is a linear independent set of eigenvectors with positive eigenvalues, but $(\K,\a,\b)$ only has $J_+$ positive eigenvalues. Conclude that $\lambda_j^{\infty} = \infty$.
 
\bigskip 
 
\textbf{Case 2:} Suppose that $t_j < \infty$ so that $\lambda_j^t$ exists for each $t \in (t_*,t_j)$. Suppose, towards a contradiction, that $\lambda_j^{t_{\infty}} < \infty$ and let $t_{\infty} = t_j$. By Lemma \ref{lem:Strong} part (ii), we know that $u_j^{t_\infty}$ satisfies
\[\a(u_j^{t_\infty},v) = \lambda_j^{t_\infty}\b^{t_{\infty}}(u_j^{t_\infty},v), \quad \text{for all}~ v \in \HH.\]
Thus, $(\lambda_j^{t_{\infty}},u_j^{t_{\infty}})$ is an eigenpair of $(\HH,\a,\b^{t_{\infty}})$ so that $(\HH,\a,\b^t)$ has $j$ positive eigenvalues for all $t \in (t_*,t_j]$, but at most $j-1$ for all $t > t_j$. This is a contradiction since the set $\{t \in \R : \lambda_j^t ~ \text{exists}\}$ is open by the Stability Proposition \ref{prop:StabSpec}. Conclude that $\lambda_j^{t_{\infty}} = \infty$, that is, $\lambda_j^t \nearrow +\infty$ as $t \nearrow t_j$. 

\end{proof}


\subsection*{Proof of Proposition \ref{NegConv}: Convergence of negative spectrum as \texorpdfstring{$t \to +\infty$}{Lg}}

\begin{proof}

In the fixed Hilbert space case let $\HH' \subset \HH$ be a finite dimensional subspace that intersects $\K$ trivially. In the moving Hilbert space case, assume in addition that $\HH' \subset \HH^{\infty}$. Set $J' = \dim(\HH')$ and note that $J'$ can take on any natural number up to $\codim(\K)$ and $\codim_{\HH^{\infty}}(\K \cap \HH^{\infty})$ in the fixed and moving Hilbert space cases, respectively.

The form $\C$ is positive on the unit ball of $\HH'$ by using the definition of $\K = \Ker(\c)$ and Cauchy--Schwarz. Since $\B$ and $\C$ are both norm continuous and the $\a$-unit sphere of $\HH'$ is compact, $\B$ is bounded by some number $M_{\B}$ and $\C$ attains its minimum, say $m_{\C} > 0$. Thus, $\B \leq M\C$ with $M = M_{\B}/m_{\C}$ and so $-\B^t \geq (t - M) \C$ on all of $\HH'$.

Let $t > M$ and $j \leq J'$ and suppose we are in the fixed Hilbert space case. By the variational characterization in Theorem \ref{VC} we have
\begin{align}
&\sup_{\Ss_j \subset \HH} \inf\{-\B^t(u) : u \in \Ss_j \text{ and } \A(u) = 1\} \label{eq:Sup1}\\
&\geq \sup_{\Ss_j \subset \HH'} \inf\{(t-M)\C(u): u \in \Ss_j \text{ and } \A(u) = 1\}\notag\\
&= \frac{t-M}{\lambda_j(\HH',\a,\c)}.\notag
\end{align}

Since $\C$ is positive on the unit sphere of $\HH'$ the problem $(\HH',\a,\c)$ has $J'$ positive eigenvalues. This shows that the supremum in (\ref{eq:Sup1}) is positive, and therefore, $\lambda_j(\HH,\a,-\b^t)$ exists and equals  the reciprocal of it by the variational characterization in Theorem \ref{VC}. Taking reciprocals we have \[|\lambda_{-j}(\HH,\a,\b^t)| \leq \frac{\lambda_j(\HH',\a,\c)}{t - M},\]
and sending $t \to \infty$ proves convergence to zero. Since $\lambda_{-j}(\HH,\a,\b^t) = -\lambda_j(\HH,\a,-\b^t)$, the eigenvalue monotonicity Lemma \ref{LimExists} implies $t \mapsto \lambda_{-j}^t(\HH,\a,\b^t)$ is increasing, which completes the proof in the fixed Hilbert space case. 

In the moving Hilbert space case, we can replace $\HH$ by $\HH^t$ in (\ref{eq:Sup1}) and proceed as above (up to but not including the monotonicity statement) when $t > \max\{M,T\}$, since $\HH' \subset \HH^t$. To see this recall that $\HH' \subset \HH^{\infty}$ and apply Lemma \ref{subspace} to conclude that each $u \in \HH'$ satisfies $\b(u,w) = \c(u,w) = 0$ for all $w \in \Ker(\a)$, so that $u \in \HH^t$.
\end{proof}

\subsection*{Proof of Corollary \ref{cor:NegInfLim}: Convergence of spectrum as \texorpdfstring{$t \to -\infty$}{Lg}}

\begin{proof}

If $j$ is a positive integer and $\lambda_{-j}(\HH,\a,\b^t)$ exists, observe that 
\begin{equation}
\label{eq:PosNeg}
\lambda_{-j}(\HH,\a,\b^t) = -\lambda_j(\HH,\a,-\b^t) = -\lambda_j(\HH,\a,(-\b)^{-t}),
\end{equation}
since $-\b^t = -\b - (-t)\c$. Parts (i) and (ii) of Corollary \ref{cor:NegInfLim} follow by using parts (i) and (ii) of Theorem \ref{SpecConv} to compute the right side of (\ref{eq:PosNeg}) as $t \searrow -\infty$. Using that 
\[-\lambda_j(\K,\a,-\b) = \lambda_{-j}(\K,\a,\b),\]
completes the proof when $j \leq J_+$. Similarly, part (iii) follows from using Proposition \ref{NegConv} and observing that the pair $(\b,\c)$ generates the same space $\HH^{\infty}$ as the pair $(-\b,\c)$, by Lemma \ref{subspace}.

\end{proof}


\section{\bf{Proofs of applications: Propositions \ref{prop:PDEApp} and \ref{prop:MatPen}}}
\label{sec:ProofPDEApp}

In order to apply our convergence theorems, we verify the fixed or moving Hilbert space conditions hold for the problems in Table \ref{tab:ProbTab}. The right sides of the first five partial differential equations in Table \ref{tab:ProbTab} are all generated by $\b^t(\cdot,\cdot)$, which allows us to verify many of the necessary conditions in the following lemma. The full proof of the sixth application, the Laplacian with Dynamical Boundary Conditions, will be given separately in Lemma \ref{DBCLem}. Recall that $\widetilde \K$ is the space of functions in $\K$ restricted to $\Omega$.

\begin{lemma}
\label{GenApp}
If the assumptions on $b$ and $c$ from Section \ref{AppPrec} hold and $\HH = H^k_{\partial \OO}(\OO)$ or $\HH = H^k(\OO)$ for some $k \geq 1$ then:
\begin{enumerate}
\item[(i)] $\b$ and $\c$ are well-defined and satisfy (C3);
\item[(ii)] $\K = \{u \in \HH : u \equiv 0 ~ \text{on} ~ \Omega'\}$;
\item[(iii)] if $\HH = H^k_{\partial \OO}(\OO)$ then $\widetilde \K = H^k_{\partial \Omega}(\Omega)$; if $\HH = H^k(\OO)$ then $\widetilde \K = H^k_{\Gamma}(\Omega)$;
\item[(iv)] assume that the fixed or moving Hilbert space conditions hold. If $|\{b^t > 0\}| > 0$ (or $|\{b^t < 0\}| > 0$) then $(\HH,\a,\b^t)$ and $(\HH^t,\a,\b^t)$ have infinitely many positive (or negative) eigenvalues. Similarly, if $|\{b|_{\Omega} > 0\}| > 0$ (or $|\{b|_{\Omega} < 0\}| > 0$) then $(\K,\a,\b)$ has infinitely many positive (or negative) eigenvalues;
\item[(v)] $\codim(\K)$ and $\codim_{\HH^{\infty}}(\K \cap \HH^{\infty})$ are infinite.
 
\end{enumerate}
\end{lemma}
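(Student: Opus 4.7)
The plan is to verify the five items in order, using Sobolev embedding and Rellich--Kondrachov compactness for (i), elementary pointwise arguments and trace theory for (ii)--(iii), and the variational characterizations from Section \ref{sec:Lemmas} together with $C_c^\infty$ test functions for (iv)--(v); the main obstacle is the extension-by-zero step in (iii), discussed below. For (i), the key estimate $|\b(u,v)| \lesssim \lVert b \rVert_{L^{p(d,k)}} \lVert u \rVert_{H^k} \lVert v \rVert_{H^k}$ (and its analog for $\c$) follows from a three-factor H\"older inequality combined with the Sobolev embedding $H^k(\OO) \hookrightarrow L^{2d/(d-2k)}(\OO)$ when $d \geq 2k+1$, and from $H^k(\OO) \hookrightarrow L^q(\OO)$ for every $q < \infty$ (when $d = 2k$) or $H^k(\OO) \hookrightarrow C(\overline \OO)$ (when $d < 2k$) otherwise. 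Weak continuity (C3) then follows from compactness of the relevant embeddings: $u^n \rightharpoonup u$ and $v^n \rightharpoonup v$ in $H^k$ imply strong convergence in the appropriate $L^q$ spaces, and splitting $u^n v^n b - uvb = (u^n - u) v^n b + u(v^n - v) b$ with H\"older drives the error to zero. For (ii), one inclusion is immediate, and for the reverse, setting $v = u$ in $\c(u,v) = 0$ gives $\int_{\Omega'} u^2 c \, dx = 0$, forcing $u = 0$ a.e.\ on $\Omega'$ since $c > 0$ there.

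For (iii), if $u \in H^k_{\partial \OO}(\OO)$ vanishes on $\Omega'$, then $u|_\Omega \in H^k(\Omega)$ with the traces $T(D^\beta u|_\Omega)$ vanishing on $\Gamma$ (matching the zero function across the Lipschitz interface) and on $\partial \Omega \cap \partial \OO$ (inherited from $\HH$), so $u|_\Omega \in H^k_{\partial \Omega}(\Omega)$. Conversely, extension by zero of $v \in H^k_{\partial \Omega}(\Omega)$ across $\Gamma$ produces an element of $H^k_{\partial \OO}(\OO)$. The argument with $\HH = H^k(\OO)$ is identical except that the outer-boundary condition is absent, giving $\widetilde \K = H^k_\Gamma(\Omega)$. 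This extension-by-zero step is the place I expect technical difficulty: when $\Gamma = \partial \Omega$ it is classical, via the $C_c^\infty$-density characterization \cite[\S 2.4.4]{Necas}, but for a partial Lipschitz interface one needs a collar-neighborhood localization of $\Gamma$ together with a partition-of-unity argument to reduce to the full-boundary case.

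For (iv), for each $n \geq 1$ I would pick $n$ disjoint bumps $\phi_1, \ldots, \phi_n \in C_c^\infty(\OO)$, each supported in a small ball around a Lebesgue point of $b^t$ in $\{b^t > 0\}$, sized so that $\int \phi_i^2 b^t \, dx > 0$. The quadratic form $\B^t$ is then diagonal and positive definite on $\Ss_n = \Span\{\phi_i\}$, and $\Ss_n \cap \Ker(\a) = \{0\}$ since elements of $\Ker(\a)$ in our applications (constants or low-degree polynomials) cannot be compactly supported in $\OO$. Theorem \ref{VC} or \ref{MVC} then produces $n$ positive eigenvalues; letting $n \to \infty$ gives infinitely many. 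The negative case is symmetric, and the statement for $(\K, \a, \b)$ follows by taking $\phi_i \in C_c^\infty(\{b|_\Omega > 0\} \cap \Omega) \subset \widetilde \K$ via (iii). For (v), the same construction with $\phi_i \in C_c^\infty(\Omega')$ yields, for every $n$, $n$ classes in $\HH/\K$ that are linearly independent (any combination in $\K$ vanishes a.e.\ on $\Omega'$ and is therefore zero everywhere), so $\codim(\K) = \infty$. Since $\HH^\infty$ has codimension at most $2 \dim \Ker(\a) < \infty$ in $\HH$ by Lemma \ref{subspace}, writing $\HH = \HH^\infty \oplus W$ with $\dim W < \infty$ and examining the quotient $\HH \to \HH/\K$ restricted to $\HH^\infty$ yields $\codim_{\HH^\infty}(\K \cap \HH^\infty) \geq \codim(\K) - \dim W = \infty$.
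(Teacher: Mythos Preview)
Your argument for (i) has a gap in the critical case $d \geq 2k+1$: the Sobolev embedding $H^k(\OO) \hookrightarrow L^{2d/(d-2k)}(\OO)$ is continuous but \emph{not} compact, so weak convergence in $H^k$ does not yield strong convergence at the critical exponent, and your product-splitting estimate $\int (u^n - u) v^n b \,dx \to 0$ then fails (H\"older would force $v^n$ into some $L^s$ with $s > 2^*$, where you have no bound). The paper circumvents this by proving only that $(u^n)^2 \to u^2$ \emph{weakly} in $L^{d/(d-2k)}$: after polarizing, it extracts a pointwise-a.e.\ convergent subsequence via the compact embedding into $L^2$, then combines $L^{d/(d-2k)}$-boundedness of $(u^n)^2$ with a Banach--Alaoglu argument to obtain weak convergence, which suffices to pair against $b \in L^{d/(2k)}$. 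Your approach can be repaired by a density step (approximate $b$ in $L^{d/(2k)}$ by bounded functions, so that only subcritical integrability of $u^n - u$ is needed), but as written it is incomplete.

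For (iii), the paper's proof is more direct than your proposed localization: it shows via integration by parts that the extension by zero of $u \in H^k_\Gamma(\Omega)$ to $\OO$ has weak derivatives, using that the boundary term on $\partial \Omega = \Gamma \cup (\partial \Omega \cap \partial \OO)$ vanishes because $T(D^\beta u) = 0$ on $\Gamma$ and test functions $\varphi \in C_c^\infty(\OO)$ vanish on $\partial \OO$; an induction on $k$ handles higher derivatives. This avoids the collar/partition-of-unity machinery you anticipate. Conversely, your treatment of (iv) via Theorem \ref{MVC} (checking only $\Ss_n \cap \Ker(\a) = \{0\}$) is cleaner than the paper's, which builds trial functions lying in $\HH^t$ itself through a rank-nullity argument; and your argument for (v) in the moving case---deducing infinite codimension from the finite codimension of $\HH^\infty$ in $\HH$---is likewise more efficient than the paper's direct construction inside $\HH^\infty$.
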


\begin{rem}
\label{rem:Omega}
Our PDE convergence results in Proposition \ref{prop:PDEApp} likely hold under weaker assumptions on $\OO,\Omega,$ and $\Omega'$ than those stated in Section \ref{AppPrec}. For example, the bilinear forms $\b$ and $\c$ are weakly continuous even when $\OO$ is unbounded (see \cite[Lemma 2.13]{Willem} for $k = 1$ and $d \geq 3$). This suggests our results could handle problems such as the Schr\"odinger eigenvalue problem $(-\Delta + V)u^t = \lambda^t b^t u^t$ on $\OO = \R^d$ for $V$ such that $-\Delta + V$ generates a coercive and continuous bilinear form on an appropriate Sobolev space. Additionally, the condition ``$\{c > 0\}$ is open" could be eliminated by defining $\Omega' = \text{int}\big(\text{ess supp}(c) \big)$. To see this it would suffice to check parts (ii) and (iii) of the above lemma continue to hold.
\end{rem}

Recall that $\widetilde \a$ and $\widetilde \b$ are the bilinear forms $\a$ and $\b$ with the integration restricted to $\Omega$. We wish to show convergence of the spectrum of $(\HH,\a,\b^t)$ to that of $(\widetilde \K,\widetilde \a, \widetilde \b)$, but our convergence theorems give convergence to the spectrum of $(\K,\a,\b)$. This is easily overcome because the spectra of these two problems coincide. Indeed, observe that $\a(u,v) = \widetilde \a(\tilde u, \tilde v)$ and $\b(u,v) = \widetilde \b (\tilde u,\tilde v)$ for each $u,v \in \K$, where $\tilde u$ and $\tilde v$ are restrictions of $u$ and $v$ to $\Omega$ since $\Gamma$ has measure zero. Using this and the eigenequation implies that $(\K,\a,\b)$ and $(\widetilde \K,\widetilde \a, \widetilde \b)$ have the same spectrum. This shows that $(\widetilde \K,\widetilde \a, \widetilde \b)$ has a discrete spectrum of eigenvalues of finite multiplicities. The convergence theorem (Theorem \ref{SpecConv}) will imply that the eigenvalues of $(\HH,\a,\b^t)$ converge to those of the problem $(\widetilde \K,\widetilde \a, \widetilde \b)$, which proves Proposition \ref{prop:PDEApp}.

\begin{proof}[Proof of Lemma \ref{GenApp}]
(i) Observe that $\b(\cdot,\cdot)$ and $\c(\cdot,\cdot)$ are well-defined by the H\"older and Sobolev inequalities (see \cite[Theorem 8.8]{LL} and note that Lipschitz domains satisfy the interior cone condition).

(C3): Suppose $u^n \rightharpoonup u$ and $v^n \rightharpoonup v$ in $\HH$. For weak continuity of $\b$ we will show that $\int_{\OO} u^nv^n b \, dx \rightarrow \int_{\OO} uvb \, dx$. The argument is the same for $\c(\cdot,\cdot)$. By a straightforward polarization argument it is enough to show $\int (u^n)^2 b \, dx \to \int u^2 b\, dx$. Thus, to show convergence it suffices to prove that $(u^n)^2 \to u^2$ weakly in $L^q(\OO)$ when $1 < q< \infty$ and weak* in $L^{\infty}(\OO)$ when $q = \infty$, where $q$ is the H\"older conjugate exponent of $p = p(d,k)$ (defined in (\ref{eq:pdk})). 

First consider an arbitrary subsequence $\{u^{n_m}\}_m$ and note that $u^{n_m} \rightharpoonup u$ in $\HH$. It follows from the compact embedding $\HH \hookrightarrow L^2(\OO)$ that $u^{n_m} \rightarrow u$ in $L^2(\OO)$ after extracting a subsequence. Convergence in $L^2(\OO)$ guarantees the existence of a further subsequence such that $u^{n_m} \rightarrow u$ pointwise a.e. Since $\{u^{n_m}\}_m$ is bounded in $\HH$, it is also bounded in $L^q(\OO)$ by the Sobolev inequalities. It follows from the Banach--Alaoglu theorem that $L^q$-boundedness plus pointwise a.e.\ convergence implies $(u^{n_m})^2 \to u^2$ weakly in $L^q(\OO)$ when $1 < q < \infty$ and weak* in $L^{\infty}(\OO)$ when $q = \infty$ \cite[Ch. 6 Exercise 20]{Fol}.

\bigskip

(ii): The functions $u \in \K$ are the functions such that $\int_{\Omega'} c uv \, dx = 0$ for every $v \in \HH$. Let $v = u$ so that $\int_{\Omega'} c u^2 \, dx = 0$. Since $c > 0$ on $\Omega'$ we have $u = 0$ a.e.\ in $\Omega'$. Conversely, if $u = 0$ a.e.\ on $\Omega'$ then $u \in \K$.

\bigskip

(iii): See Lemma \ref{lem:TildeK}.

\bigskip

(iv): First suppose that $E = \{b^t > 0\}$ has positive measure. To show that there are infinitely many positive eigenvalues we will construct subspaces of arbitrarily large dimension on which $\B^t > 0$. 

First let $\zeta^{\epsilon}$ be the square root of an approximate identity and extend $b^t$ by zero so that $((\zeta^{\epsilon})^2 * b^t)(x) \to b^t(x)$ a.e.\ on $E$ as $\epsilon \to 0$. In particular, for each $N \in \N$ there are points $\{x_1, \dots, x_N\} \subset E$ such that $\int_{\OO} \zeta^{\epsilon}(x_i - y)^2 b^t(y) \, dy \to b^t(x_i) > 0$ for $i = 1, \dots, N$. Thus, by taking $\epsilon$ small enough  we have that $\int_{\OO} \zeta^{\epsilon}(x_i - y)^2 b^t(y) \, dy > 0$ for each $i$ and that $\zeta^{\epsilon}(x_1 - y)^2, \dots, \zeta^{\epsilon}(x_N - y)^2$ have pairwise disjoint support. Let $\zeta_i(y) = \zeta^{\epsilon}(x_i - y)$ for each $i = 1, \dots, N$ so that $\{\zeta_1, \dots, \zeta_N\}$ are linearly independent and $\B^t(\zeta_i) > 0$.

Let $N \in \N$ be arbitrary and let $\zeta_i$ for $i = 1, \dots, N$ be constructed as above. Then 
\begin{equation}
\label{eq:BtPos}
\B^t(\sum_{i = 1}^N a_i \zeta_i) = \sum_{i = 1}^N a_i^2 \B^t(\zeta_i) > 0,
\end{equation}
for every nonzero $(a_1, \dots, a_N) \in \R^N$ since the functions $\zeta_1, \dots, \zeta_N$ have disjoint support. 

In the fixed Hilbert space case, the variational characterization in Theorem \ref{VC} shows that there are at least $N$ positive eigenvalues since $\Span\{\zeta_i : i = 1, \dots, N\}$ is $N$-dimensional and $\B^t$ is uniformly positive on the $\a$-unit ball of the span. Since $N$ is arbitrary $(\HH,\a,\b^t)$ must have infinitely many positive eigenvalues. The claim for $(\K,\a,\b)$ can be proved the same way that it was proved for $(\HH,\a,\b^t)$ by working with $\b$ and $\Omega$ instead of $\b^t$ and $\OO$.

For the moving Hilbert space case, suppose that $\Ker(\a)$ has dimension $m$, that $\{w_1, \dots, w_m\}$ is a basis, and let $N \in \N$ be arbitrary. Then let $\zeta_1, \dots, \zeta_{N + m}$ be functions constructed as above. Consider the the $(N + m) \times m$ matrix with $(i,j)^{\text{th}}$-entry $\b^t(\zeta_i,w_j)$. The kernel of the transpose of this matrix has dimension at least $N$ by the rank-nullity theorem. Thus, there are linearly independent vectors $a^k = (a_1^k, \dots, a_{N+m}^k) \in \R^{N+m}$ for $k = 1, \dots, N$ such that 
\begin{equation}
\label{eq:OrthSet}
\sum_{i = 1}^{N+m} a_i^k \b^t(\zeta_i,w_j) = \b^t(\sum_{i = 1}^{N+m} a_i^k \zeta_i,w_j) = 0 \quad \text{for each} \quad j = 1, \dots, m.
\end{equation}

Let $Z_k = \sum_{i = 1}^{N+m} a_i^k \zeta_i$ so that $Z_k \in \HH^t$ by (\ref{eq:OrthSet}). The set $\{a^1, \dots, a^N\}$ is linearly independent so that $\Span\{Z_1, \dots, Z_N\}$ is an $N$-dimensional subspace of $\HH^t$. Since $\Span\{Z_1, \dots, Z_N\}$ consists of linear combinations of $\zeta_1, \dots, \zeta_{N+m}$ we know that $\B^t$ is uniformly positive on the $\a$-unit sphere of $\Span\{Z_1, \dots, Z_N\}$ by the same calculation in (\ref{eq:BtPos}). Thus, $(\HH^t,\a,\b^t)$ has at least $N$ positive eigenvalues by the variational characterization in Theorem \ref{VC}. Since $N$ is arbitrary, $(\HH^t,\a,\b^t)$ and therefore $(\HH,\a,\b^t)$ must have infinitely many positive eigenvalues. 

When $\{b^t < 0\}$ and $\{b|_{\Omega} < 0\}$ have positive measure an analogous construction can be performed to show that $(\HH,\a,\b^t)$ and $(\K,\a,\b)$ have infinitely many negative eigenvalues.

\bigskip

(v): In the fixed Hilbert space case $\codim(\K)$ is infinite because 
the smooth functions with support in $\Omega'$ are an infinite dimensional subspace of $\K^{\perp}$.

For the moving Hilbert space case, recall that 
\[\HH^{\infty} = \{u \in \HH : \b(u,w) = \c(u,w) = 0 ~ \text{for all} ~ w \in \Ker(\a)\}.\] 
To show that $\codim_{\HH^{\infty}}(\K \cap \HH^{\infty})$ is infinite, it suffices to construct arbitrarily many functions with disjoint support contained in $\Omega'$ that are both $\b$ and $\c$-orthogonal to $\Ker(\a)$. 

To see that $\codim_{\HH^{\infty}}(\K \cap \HH^{\infty}) = \infty$, we can construct matrices whose transposes have $(i,j)^{\text{th}}$-entries $\b(\zeta_i,w_j)$ and $\c(\zeta_i,w_j)$, where $1 \leq i \leq N + m$ and $1 \leq j \leq m$, similarly to before. One can show that the intersection of the kernels of these matrices has dimension of order $N$ as $N \to \infty$. Forming linear combinations of $\zeta_1, \dots, \zeta_{N+m}$ with coefficients given by vectors in the intersection produces subspaces of $\K^{\perp} \cap \HH^{\infty}$ of arbitrarily large dimension. This shows that $\codim_{\HH^{\infty}}(\K^{\perp} \cap \HH^{\infty})$ is infinite.

\end{proof}

Once we verify that the remaining fixed or moving Hilbert space conditions, either (C1) or (C1$'$), and (C2), are satisfied for each problem we will have proved Propositions \ref{prop:PDEApp} with the exception of the Dynamical Boundary Conditions problem, because Lemma \ref{GenApp} has already verified (C3) and identified the space $\widetilde \K$ associated to the limiting problem. Additionally, given the weights $b$ and $c$ the lemma determined $J_+,J_-,\codim(\K),$ the number of positive and negative eigenvalues of $(\HH,\a,\b^t)$, and $\codim_{\HH^{\infty}}(\K \cap \HH^{\infty})$ in the moving Hilbert space case, for each problem in Table \ref{tab:ProbTab}.


\begin{prop}[Schr\"odinger Operator \& Dirichlet Laplacian]
Let $b$ and $c$ be as in Section \ref{AppPrec} and $V \in L^{\infty}(\OO)$ be nonnegative. If $\HH = H^1_0(\OO)$ and $\a(u,v) = \int_{\OO} (\nabla u \cdot \nabla v + uvV) \, dx$, then the fixed Hilbert space conditions are satisfied.
\end{prop}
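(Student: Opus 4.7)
The plan is to verify the three fixed Hilbert space conditions (C1), (C2), (C3) for the triple $(\HH,\a,\b^t)$ with $\HH = H^1_0(\OO)$ and $\a(u,v) = \int_{\OO}(\nabla u \cdot \nabla v + uvV)\,dx$. Since $V \geq 0$, the quadratic form $\A(u) = \int_{\OO}|\nabla u|^2\,dx + \int_{\OO} V u^2\,dx$ is manifestly non-negative, so the main point is to extract a positive lower bound in terms of the $H^1$-norm, and a matching continuity upper bound.

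First I would handle (C3). This is the easiest step because it is an immediate consequence of Lemma \ref{GenApp}(i): the bilinear forms $\b(u,v) = \int_{\OO} uvb\,dx$ and $\c(u,v) = \int_{\OO} uvc\,dx$ are weakly sequentially continuous on $\HH \times \HH$ whenever $b,c \in L^{p(d,k)}(\OO)$ with $k = 1$, which holds by the standing assumption in Section~\ref{AppPrec}. So nothing new is required here.

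Next I would verify (C2). Using Cauchy--Schwarz on the gradient term and H\"older's inequality together with $V \in L^{\infty}(\OO)$ on the potential term, I obtain
\[
|\a(u,v)| \leq \|\nabla u\|_{L^2}\|\nabla v\|_{L^2} + \|V\|_{L^{\infty}}\|u\|_{L^2}\|v\|_{L^2} \leq (1 + \|V\|_{L^{\infty}})\|u\|_{H^1}\|v\|_{H^1},
\]
which gives norm continuity of $\a(\cdot,\cdot)$ on $\HH \times \HH$.

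Finally, for (C1), I would invoke the Poincar\'e inequality: since $\OO$ is a bounded Lipschitz domain, there exists a constant $C = C(\OO) > 0$ such that $\|u\|_{L^2}^2 \leq C\|\nabla u\|_{L^2}^2$ for all $u \in H^1_0(\OO)$. Combined with $V \geq 0$, this gives
\[
\A(u) \geq \|\nabla u\|_{L^2}^2 \geq \frac{1}{1+C}\bigl(\|\nabla u\|_{L^2}^2 + \|u\|_{L^2}^2\bigr) = \frac{1}{1+C}\|u\|_{H^1}^2,
\]
establishing coercivity of $\A$ on $\HH$ with constant $\gamma = 1/(1+C) > 0$. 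I do not anticipate any real obstacle; the only subtlety worth explicit mention is that Poincar\'e's inequality requires the Dirichlet boundary condition (i.e.\ working in $H^1_0$, not $H^1$), which is exactly why (C1) holds here while the Neumann case will instead require (C1$'$) and the moving Hilbert space framework.
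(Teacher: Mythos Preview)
Your proposal is correct and follows essentially the same approach as the paper: the paper's proof simply says that continuity of $\a$ follows from boundedness of $V$, coercivity from $V\geq 0$ together with the Poincar\'e inequality, and (C3) from Lemma~\ref{GenApp}. You have written out exactly these steps in more detail, so there is no substantive difference.
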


\begin{proof}
Continuity of $\a(\cdot,\cdot)$ follows from using boundedness of $V$. Coercivity follows using $V \geq 0$ and the Poincar\'e inequality. This shows (C1) and (C2), and (C3) was proved in Lemma \ref{GenApp}.

\end{proof}

\begin{prop}[Robin Laplacian]
Let $b$ and $c$ be as in Section \ref{AppPrec}. If $\HH = H^1(\OO)$ and $\a(u,v) = \int_{\OO} \nabla u \cdot \nabla v \, dx + \alpha \int_{\partial \OO} uv \, dS$ with $\alpha > 0$, then the fixed Hilbert space conditions are satisfied.
\end{prop}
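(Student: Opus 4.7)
My plan is to verify (C1) and (C2), since (C3) is already handled by Lemma \ref{GenApp}. The continuity of $\a(\cdot,\cdot)$ on $H^1(\OO) \times H^1(\OO)$ follows immediately from Cauchy--Schwarz applied to the gradient term and from boundedness of the trace operator $T : H^1(\OO) \to L^2(\partial \OO)$ (valid since $\OO$ is a bounded Lipschitz domain) applied to the boundary integral. Thus
\[
|\a(u,v)| \leq \|\nabla u\|_{L^2(\OO)} \|\nabla v\|_{L^2(\OO)} + \alpha \|Tu\|_{L^2(\partial\OO)} \|Tv\|_{L^2(\partial\OO)} \leq C \|u\|_{H^1}\|v\|_{H^1},
\]
so (C2) holds.

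For (C1) I would argue coercivity of $\A(u) = \int_\OO |\nabla u|^2\,dx + \alpha \int_{\partial\OO} u^2\,dS$ on $H^1(\OO)$ by a standard contradiction/compactness argument. Suppose no $\gamma>0$ works. Then there exists a sequence $u_n \in H^1(\OO)$ with $\|u_n\|_{H^1} = 1$ and $\A(u_n) \to 0$. In particular $\|\nabla u_n\|_{L^2(\OO)} \to 0$ and $\|Tu_n\|_{L^2(\partial\OO)} \to 0$, which together with $\|u_n\|_{H^1} = 1$ forces $\|u_n\|_{L^2(\OO)} \to 1$. By the Rellich--Kondrachov theorem on bounded Lipschitz domains, a subsequence of $u_n$ converges in $L^2(\OO)$; combined with $\nabla u_n \to 0$ strongly, that subsequence converges in $H^1(\OO)$ to a limit $u$ which is locally constant on $\OO$. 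Continuity of the trace operator then gives $Tu = 0$ in $L^2(\partial\OO)$. Since each connected component of a bounded Lipschitz domain has nonempty boundary, the locally constant function $u$ must vanish identically, contradicting $\|u\|_{L^2(\OO)} = 1$.

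I expect the main obstacle to be purely technical: justifying the compactness and trace steps in the coercivity argument under the Lipschitz regularity of $\partial\OO$. Both the compact embedding $H^1(\OO) \hookrightarrow L^2(\OO)$ and the continuity of $T : H^1(\OO) \to L^2(\partial \OO)$ are classical for bounded Lipschitz domains, so the argument goes through without any essential difficulty. With (C1), (C2), and (C3) verified, the fixed Hilbert space conditions hold, and the proposition follows.
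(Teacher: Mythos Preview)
Your proof is correct and takes essentially the same approach as the paper: (C3) is deferred to Lemma~\ref{GenApp}, (C2) follows from boundedness of the trace operator, and (C1) is established by the standard contradiction/compactness argument (which the paper only cites, referring to the Poincar\'e-type argument in \cite[\S 5.8.1]{Evans} or alternatively \cite[example 5]{Graser}). Your written-out coercivity argument is exactly the one the paper alludes to; note that since $\OO$ is a domain and hence connected, the ``locally constant'' step simplifies to ``constant.''
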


\begin{proof}
The coercivity condition (C1) can be verified using a proof by contradiction similar to the proof of the usual Poincar\'e inequality, which can be found in \cite[\S 5.8.1]{Evans}. Alternatively, coercivity follows from a general Hilbert space coercivity theorem (see \cite[example 5]{Graser}).

Using that the trace operator is bounded from $H^1(\OO)$ into $L^2(\partial \OO)$ shows that $\a(\cdot,\cdot)$ is continuous and  verifies (C2). Recall (C3) was proved in Lemma \ref{GenApp}.

\end{proof}


\begin{prop}[Clamped Bi-Laplacian]
Let the weights $b$ and $c$ be as in Section \ref{AppPrec}. If $\HH = H^2_0(\OO)$ and $\a(u,v) = \int_{\OO} (\Delta u) (\Delta v) + \tau \nabla u \cdot \nabla v \, dx$ with $\tau \geq 0$, then the fixed Hilbert space conditions are satisfied. 
\end{prop}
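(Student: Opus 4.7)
The plan is to verify conditions (C1) and (C2) directly from standard Sobolev space estimates, since condition (C3) has already been established by Lemma \ref{GenApp} for $\HH = H^2_0(\OO) = H^2_{\partial \OO}(\OO)$ with $k=2$.

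For continuity (C2), I will apply the Cauchy--Schwarz inequality twice: one obtains
\[
|\a(u,v)| \leq \lVert \Delta u \rVert_{L^2(\OO)} \lVert \Delta v \rVert_{L^2(\OO)} + \tau \lVert \nabla u \rVert_{L^2(\OO)} \lVert \nabla v \rVert_{L^2(\OO)} \leq C_\tau \lVert u \rVert_{H^2(\OO)} \lVert v \rVert_{H^2(\OO)},
\]
where each factor on the right of the first inequality is controlled by the $H^2$-norm by definition.

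For coercivity (C1), the key point is that on $H^2_0(\OO)$ the quantity $\lVert \Delta u \rVert_{L^2(\OO)}$ defines a norm equivalent to the full $H^2$-norm. I would first invoke the identity (obtained by integration by parts on $C_c^\infty(\OO)$ and extended to $H^2_0(\OO)$ by density, which is valid since $\OO$ is a bounded Lipschitz domain so $H^2_0(\OO)$ is the closure of $C_c^\infty(\OO)$)
\[
\int_{\OO} (\Delta u)^2 \, dx = \sum_{i,j=1}^d \int_{\OO} (\partial_i \partial_j u)^2 \, dx = \lVert D^2 u \rVert_{L^2(\OO)}^2,
\]
which already controls the second-order part of the $H^2$-norm. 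Then I would apply the Poincar\'e inequality twice (once to get $\lVert u \rVert_{L^2} \lesssim \lVert \nabla u \rVert_{L^2}$ for $u \in H^1_0(\OO)$, and once to components of $\nabla u$, each of which lies in $H^1_0(\OO)$ when $u \in H^2_0(\OO)$) to get $\lVert \nabla u \rVert_{L^2} \lesssim \lVert D^2 u \rVert_{L^2}$ and $\lVert u \rVert_{L^2} \lesssim \lVert D^2 u \rVert_{L^2}$. Combining yields $\lVert u \rVert_{H^2(\OO)}^2 \lesssim \lVert \Delta u \rVert_{L^2(\OO)}^2 \leq \A(u)$, since the $\tau \lVert \nabla u \rVert_{L^2}^2$ term is nonnegative.

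The only step requiring mild care is the integration-by-parts identity relating $\lVert \Delta u \rVert_{L^2}$ to $\lVert D^2 u \rVert_{L^2}$ on $H^2_0(\OO)$; this is routine given the density of $C_c^\infty(\OO)$ in $H^2_0(\OO)$ on Lipschitz domains (as remarked in the paper after the definition of $H^k_{\partial \Omega}$). With coercivity and continuity in hand, together with the already-established (C3), the triple $(\HH,\a,\b^t)$ satisfies the fixed Hilbert space conditions, completing the proof.
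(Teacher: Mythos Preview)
Your proof is correct and follows essentially the same approach as the paper: continuity is immediate from Cauchy--Schwarz, coercivity comes from the integration-by-parts identity $\int_{\OO}(\Delta u)^2\,dx = \lVert D^2 u\rVert_{L^2}^2$ on $H^2_0(\OO)$ combined with repeated applications of the Poincar\'e inequality, and (C3) is deferred to Lemma~\ref{GenApp}. Your write-up simply spells out in more detail what the paper leaves terse.
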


\begin{proof}

Continuity of $\a(\cdot,\cdot)$ on $H^2_0(\OO)$ is immediate. For coercivity, note 
\begin{equation}
\label{BiLapId}
\int_{\OO} (\Delta u)^2 \, dx = \int_{\OO} \sum_{j,k} u_{x_j x_k}^2 \, dx,
\end{equation}
for $u \in H^2_0(\OO)$, by integration-by-parts. The coercivity condition (C1) then follows from repeated applications of the Poincar\'e inequality to the right side of (\ref{BiLapId}) and to the gradient term in $\A(u)$. Again, (C3) was proved in Lemma \ref{GenApp}.

\end{proof}


Let $\mathcal{P}_m(\OO)$ denote the space of polynomials of degree at most $m$ on $\OO$. Since $\OO$ is a bounded Lipschitz domain the following theorem will establish that the $\A$ forms are coercive on $\Ker(\a)^{\perp}$ in the moving Hilbert space applications. In this section $\perp$ denotes the $\HH$-orthogonal complement, where $\HH = H^k_{\partial \OO}(\OO)$ or $H^k(\OO)$. When $k = 1$ the result is essentially the Poincar\'e inequality on $H^1(\OO)$, for functions with mean value zero. Recall that $\OO$ is a domain and therefore is connected. 

\begin{theorem}[\protect{\cite[Corollary 1]{Graser}}]
\label{PDECoercivity}
Let $k \geq 1$. If $\a_k(u,v) = \sum_{\beta : |\beta| = k} \int_{\OO} D^{\beta}u D^{\beta}v \, dx$ then $\A_k(\cdot) = \a_k(\cdot,\cdot)$ is coercive on $\mathcal{P}_{k-1}(\OO)^{\perp}$.
\end{theorem}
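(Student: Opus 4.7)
The plan is to argue by a compactness-contradiction, in the spirit of the classical proof of the Poincar\'e inequality for mean-zero functions. Suppose the claimed coercivity of $\A_k(\cdot)$ on $\mathcal{P}_{k-1}(\OO)^{\perp}$ fails. Then there is a sequence $\{u^n\} \subset \mathcal{P}_{k-1}(\OO)^{\perp}$ with $\lVert u^n \rVert_{H^k(\OO)} = 1$ for every $n$ and $\A_k(u^n) \to 0$ as $n \to \infty$; the goal is to derive a contradiction.

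First I would extract (without relabeling) a subsequence with $u^n \rightharpoonup u$ weakly in $H^k(\OO)$. Using the Rellich--Kondrachov compact embedding $H^k(\OO) \hookrightarrow H^{k-1}(\OO)$, valid since $\OO$ is a bounded Lipschitz domain, I would pass to a further subsequence so that $u^n \to u$ strongly in $H^{k-1}(\OO)$. Combining this with $\A_k(u^n) \to 0$, which says all weak derivatives of $u^n$ of order exactly $k$ converge to zero in $L^2(\OO)$, upgrades the convergence to $u^n \to u$ strongly in $H^k(\OO)$. Consequently $\lVert u \rVert_{H^k(\OO)} = 1$, so $u \neq 0$, every order-$k$ weak derivative of $u$ vanishes, and $u \in \mathcal{P}_{k-1}(\OO)^{\perp}$ since that subspace is norm-closed.

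The final step is to conclude $u \in \mathcal{P}_{k-1}(\OO)$, which together with $u \in \mathcal{P}_{k-1}(\OO)^{\perp}$ forces $u = 0$ and contradicts $\lVert u \rVert_{H^k(\OO)} = 1$. The main obstacle is exactly this distributional claim: an element of $H^k(\OO)$ on a connected open set whose order-$k$ weak derivatives all vanish must be a polynomial of degree at most $k-1$. I would establish this by induction on $k$. The base case $k = 1$ is the standard fact that a weakly differentiable function with zero gradient on a connected open set is constant. For the inductive step, apply the statement at level $k-1$ to each first-order weak partial derivative of $u$, which lies in $H^{k-1}(\OO)$ with all of its order-$(k-1)$ weak derivatives equal to zero, so each such partial is a polynomial of degree at most $k-2$; antidifferentiating (for instance by applying the compatibility of mixed weak partials and integrating along coordinate lines inside connected subregions of $\OO$) recovers $u$ as a polynomial of degree at most $k-1$ on $\OO$.
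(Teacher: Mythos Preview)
The paper does not supply its own proof of this theorem: it is quoted verbatim as \cite[Corollary 1]{Graser} and used as a black box to verify condition (C1$'$) in the Neumann and Free Bi-Laplacian applications. So there is no in-paper argument to compare against.

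Your compactness--contradiction argument is the standard route to such Poincar\'e-type inequalities and is correct. One small point: the ``antidifferentiation along coordinate lines'' description in the inductive step is the only place that is slightly loose, since a general bounded Lipschitz domain need not contain the relevant coordinate segments. A cleaner way to finish is to note that once each $\partial_i u$ equals a polynomial $p_i$ of degree at most $k-2$, weak commutativity of mixed partials gives $\partial_j p_i = \partial_i p_j$ as polynomial identities on all of $\R^d$, so there is a polynomial $P$ of degree at most $k-1$ with $\nabla P = (p_1,\dots,p_d)$; then $\nabla(u-P)=0$ weakly on the connected set $\OO$, and the base case yields $u - P$ constant. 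With that adjustment the argument is complete.
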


\begin{prop}[Neumann Laplacian]
Let the weights $b$ and $c$ be as in Section \ref{AppPrec}. If $\HH = H^1(\OO)$ and $\a(u,v) = \int_{\OO} \nabla u \cdot \nabla v \, dx$ then the moving Hilbert space conditions are satisfied. 
\end{prop}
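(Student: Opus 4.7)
The plan is to verify each of the three moving Hilbert space conditions in turn, noting that (C3) is already handled by Lemma \ref{GenApp}, so only (C1$'$) and (C2) need to be checked.

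First I would dispose of (C2) in a single line: continuity of $\a(u,v) = \int_{\OO} \nabla u \cdot \nabla v \, dx$ on $H^1(\OO) \times H^1(\OO)$ is immediate from the Cauchy--Schwarz inequality applied to the $L^2$ inner product of the gradients, since $\lvert \a(u,v) \rvert \leq \lVert \nabla u \rVert_{L^2} \lVert \nabla v \rVert_{L^2} \leq \lVert u \rVert_{H^1}\lVert v \rVert_{H^1}$.

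For (C1$'$), the first task is to identify $\Ker(\a)$. Since $\OO$ is a bounded Lipschitz domain (hence connected), $\a(u,u) = \int_{\OO} \lvert \nabla u \rvert^2 \, dx = 0$ forces $u$ to be a constant, so $\Ker(\a) = \mathcal{P}_0(\OO)$, the one-dimensional space of constants. This is finite-dimensional, as required. Coercivity of $\A$ on $\Ker(\a)^{\perp} = \mathcal{P}_0(\OO)^{\perp}$ is then an immediate application of Theorem \ref{PDECoercivity} with $k = 1$. The last item is the triviality of $\Ker(\a) \cap \K$: by Lemma \ref{GenApp}(ii), any $u \in \K$ must vanish on $\Omega'$; but a constant that vanishes on the nonempty open set $\Omega'$ is identically zero, so $\Ker(\a) \cap \K = \{0\}$.

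There is no serious obstacle here — the only step with any content is the coercivity on $\Ker(\a)^{\perp}$, which has been outsourced to the quoted Poincar\'e-type Theorem \ref{PDECoercivity}. All other verifications are either one-line consequences of previously established lemmas (for (C3) and the kernel intersection) or direct applications of Cauchy--Schwarz (for (C2)). Consequently I expect the proof to be quite short, essentially a checklist.
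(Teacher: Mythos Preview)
Your proposal is correct and follows essentially the same approach as the paper: both verify (C2) as immediate, cite Lemma \ref{GenApp} for (C3), identify $\Ker(\a)$ as the constants, invoke Theorem \ref{PDECoercivity} with $k=1$ for coercivity, and argue triviality of $\Ker(\a) \cap \K$ by noting a constant vanishing on $\Omega'$ must be zero. The only cosmetic difference is that the paper phrases the last step directly in terms of $c>0$ on a set of positive measure rather than routing through Lemma \ref{GenApp}(ii), but the content is identical.
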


\begin{proof}

The continuity condition (C2) is obvious and (C3) was proved in Lemma \ref{GenApp}. The coercivity condition (C1$'$) is satisfied because $\Ker(\a)$ is 1-dimensional, consisting just of the constant functions. Since $c > 0$ on a set of positive measure, the only constant function in $\K$ is the zero function so $\Ker(\a) \cap \K$ is trivial. 

Coercivity on $\Ker(\a)^{\perp}$ follows immediately from Theorem \ref{PDECoercivity} with $k = 1$. Alternatively, it follows from the Poincar\'e inequality (see \cite[\S 5.8.1]{Evans}) and noting that the map $u \mapsto \frac{1}{|\OO|} \int_{\OO} u \, dx$ is the orthogonal projection onto the constants.
\end{proof}

\begin{prop}[Free Bi-Laplacian]
Let the weights $b$ and $c$ be as in Section \ref{AppPrec}. If $\HH = H^2(\OO)$ and $\a(u,v) = \int_{\OO} (D^2u \cdot D^2v + \tau \nabla u \cdot \nabla v) \, dx$ with $\tau \geq 0$, then the moving Hilbert space conditions are satisfied. 
\end{prop}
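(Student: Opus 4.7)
The plan is to verify the three moving Hilbert space conditions for this triple. Since (C3) has already been established by Lemma \ref{GenApp} and (C2) is immediate from Cauchy--Schwarz on $H^2(\OO)$, the task reduces to verifying (C1$'$).

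First I would identify $\Ker(\a)$. Because $\A(u) = \int_{\OO} |D^2 u|^2 + \tau |\nabla u|^2 \, dx$ is a sum of nonnegative terms and $\OO$ is connected, a function $u$ lies in $\Ker(\a)$ precisely when $D^2 u \equiv 0$ on $\OO$ (and additionally $\nabla u \equiv 0$ when $\tau > 0$). Hence
\[
\Ker(\a) = \begin{cases} \mathcal{P}_1(\OO), & \tau = 0,\\ \mathcal{P}_0(\OO), & \tau > 0, \end{cases}
\]
both of which are finite dimensional. The intersection $\Ker(\a) \cap \K$ is trivial because any polynomial vanishing on the open set $\Omega'$ (which has positive measure) must be identically zero, and $\K$ consists of functions vanishing on $\Omega'$ by Lemma \ref{GenApp}(ii).

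For coercivity of $\A$ on $\Ker(\a)^{\perp}$, I would split into two cases. When $\tau = 0$ we have $\Ker(\a)^{\perp} = \mathcal{P}_1(\OO)^{\perp}$ and $\A(u) = \A_2(u)$ in the notation of Theorem \ref{PDECoercivity}, so coercivity follows directly from that theorem with $k = 2$. When $\tau > 0$ we have $\Ker(\a)^{\perp} = \mathcal{P}_0(\OO)^{\perp}$, i.e.\ the mean-zero subspace. Poincar\'e's inequality on this subspace (equivalently, Theorem \ref{PDECoercivity} with $k=1$) gives $\|u\|_{L^2(\OO)}^2 \leq C \|\nabla u\|_{L^2(\OO)}^2$. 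Combined with the trivial bound $\A(u) \geq \min(\tau,1)\bigl(\|D^2 u\|_{L^2(\OO)}^2 + \|\nabla u\|_{L^2(\OO)}^2\bigr)$ this yields $\|u\|_{H^2(\OO)}^2 \lesssim_{\tau} \A(u)$, which is the required coercivity.

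The only mildly subtle point is that when $\tau > 0$ the kernel $\mathcal{P}_0(\OO)$ is strictly smaller than $\mathcal{P}_1(\OO)$, so one cannot directly invoke Theorem \ref{PDECoercivity} with $k = 2$ on $\Ker(\a)^{\perp}$: non-constant mean-zero linear polynomials lie in $\Ker(\a)^{\perp}$ but have vanishing Hessian. The $\tau \int |\nabla u|^2 \, dx$ term together with the mean-zero Poincar\'e inequality is precisely what compensates for this loss. No other serious obstacle arises, so the argument is a short routine verification.
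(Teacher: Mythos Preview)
Your proposal is correct and follows essentially the same route as the paper: identify $\Ker(\a)$ as $\mathcal{P}_1(\OO)$ or $\mathcal{P}_0(\OO)$ according as $\tau = 0$ or $\tau > 0$, check $\Ker(\a) \cap \K = \{0\}$ via polynomials vanishing on the open set $\Omega'$, and invoke Theorem~\ref{PDECoercivity} for coercivity.

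The one difference worth noting is your treatment of the $\tau > 0$ case. The paper disposes of it in one sentence, saying it ``follows from the $\tau = 0$ case since $\Ker(\a)$ consists of the constants \ldots\ and the $\tau$-term only makes $\A(u)$ larger.'' As you correctly observe, this is slightly glib: the orthogonal complement $\mathcal{P}_0(\OO)^{\perp}$ is \emph{larger} than $\mathcal{P}_1(\OO)^{\perp}$, and $\A_2$ vanishes on the mean-zero linear functions in the gap, so the $\tau = 0$ coercivity does not transfer by monotonicity alone. Your explicit use of the mean-zero Poincar\'e inequality (Theorem~\ref{PDECoercivity} with $k=1$) to control $\lVert u\rVert_{L^2}$ by $\lVert \nabla u\rVert_{L^2}$, combined with $\A(u) \geq \min(1,\tau)\bigl(\lVert D^2 u\rVert_{L^2}^2 + \lVert \nabla u\rVert_{L^2}^2\bigr)$, is the clean way to close this, and makes your argument more complete than the paper's on this point.
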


\begin{proof}

The continuity condition (C2) is obvious and (C3) was proved in Lemma \ref{GenApp}. To see (C1$'$), first suppose that $\tau = 0$. It is easy to see that $\Ker(\a) = \mathcal{P}_1(\OO)$. Since $c > 0$ on an open set any polynomial in $\Ker(\a) \cap \K$ must be identically zero on $\Omega'$, and so $\Ker(\a) \cap \K$ is trivial. Coercivity on $\Ker(\a)^{\perp}$ follows from noting that $\a$ is equal to $\a_2$ and applying Theorem \ref{PDECoercivity}. 

When $\tau > 0$ condition (C1$'$) follows from the $\tau = 0$ case since $\Ker(\a)$ consists of the constants (rather than all of the first degree polynomials) and the $\tau$-term only makes $\A(u)$ larger than when $\tau = 0$.
\end{proof}

\begin{prop}[Laplacian with dynamical boundary conditions]
\label{DBCLem}
Assume $\Omega = \OO$ is a bounded Lipschitz domain. If $\HH = H^1(\Omega)$, $\a(u,v) = \int_{\Omega} \nabla u \cdot \nabla v \, dx$, and $\b(u,v) = \int_{\Omega} uv \, dx $ and $\c(u,v) = \int_{\partial \Omega} uv \, dS$, then the moving Hilbert space conditions are satisfied and $\K = H^1_0(\Omega)$. When $d \geq 2$ the problem $(\HH,\a,\b^t)$ has infinitely many positive and negative eigenvalues for each $t > 0$ and $\codim_{\HH^{\infty}}(\K \cap \HH^{\infty})$ is infinite. When $d = 1$ the same holds except there is only a single negative eigenvalue for large $t$, and $\codim_{\HH^{\infty}}(\K \cap \HH^{\infty}) = 1$.
\end{prop}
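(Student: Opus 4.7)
The plan is to check the moving Hilbert space conditions, identify $\K$, then count the positive and negative eigenvalues, handling $d\ge 2$ and $d=1$ separately.

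First I would verify (C2), (C3), and (C1\SP). Continuity (C2) of $\a$ is standard. For (C3), the form $\b(u,v)=\int_{\Omega}uv\,dx$ is weakly continuous by the compact embedding $H^1(\Omega)\hookrightarrow L^2(\Omega)$ as in Lemma \ref{GenApp}, and $\c(u,v)=\int_{\partial\Omega}uv\,dS$ is weakly continuous because the trace operator $H^1(\Omega)\to L^2(\partial\Omega)$ is compact on bounded Lipschitz domains. For (C1\SP), $\Ker(\a)$ is spanned by the constants (dimension one, since $\Omega$ is connected); a nonzero constant $c$ satisfies $\c(c,c)=c^2|\partial\Omega|>0$, so $\Ker(\a)\cap\K=\{0\}$; and coercivity of $\A$ on $\Ker(\a)^{\perp}$ (the mean-zero functions) is Theorem \ref{PDECoercivity} with $k=1$.

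To identify $\K=H^1_0(\Omega)$: if $u\in\Ker(\c)$, taking $v=u$ gives $\int_{\partial\Omega}u^2\,dS=0$, so the trace of $u$ vanishes; conversely any $u\in H^1_0$ has zero trace. For the infinitely many positive eigenvalues, the subspace $H^1_0(\Omega)\subset\HH$ is infinite-dimensional, intersects $\Ker(\a)$ trivially, and $\b^t|_{H^1_0}=\b>0$ there. Thus the moving variational characterization (Theorem \ref{MVC}) yields infinitely many positive eigenvalues of $(\HH^t,\a,\b^t)$ for $t>T$, which match those of $(\HH,\a,\b^t)$ by Lemma \ref{lem:MEE}.

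For $d\ge 2$, infinitely many negative eigenvalues are produced by rescaled bumps near distinct boundary points. A cutoff $\zeta_{\epsilon}$ concentrated at scale $\epsilon$ near a boundary point $x_i$ satisfies $\int_{\Omega}\zeta_{\epsilon}^2\,dx=O(\epsilon^d)$ but $\int_{\partial\Omega}\zeta_{\epsilon}^2\,dS=O(\epsilon^{d-1})$, so $\b^t(\zeta_{\epsilon})<0$ once $\epsilon$ is small. Choosing $N$ such bumps with disjoint supports (possible since $\partial\Omega$ has dimension $d-1\ge 1$) and correcting them into $\HH^t$ by the rank-nullity trick from Lemma \ref{GenApp}(iv) produces an $N$-dimensional subspace of $\HH^t$ on which $\b^t$ is negative definite; the variational characterization then gives at least $N$ negative eigenvalues. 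Letting $N\to\infty$ completes the count, and the analogous argument applied to boundary traces shows $\codim_{\HH^{\infty}}(\K\cap\HH^{\infty})$ is infinite.

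For $d=1$, a direct computation gives $\HH^{\infty}=\{u\in H^1(0,L) : \int_0^L u\,dx=0,\ u(0)+u(L)=0\}$ and $\K\cap\HH^{\infty}=\{u\in H^1_0(0,L) : \int u\,dx=0\}$, which differ only by the extra condition $u(0)=0$; hence $\codim_{\HH^{\infty}}(\K\cap\HH^{\infty})=1$ and Proposition \ref{NegConv} delivers at least one negative eigenvalue tending to zero. The main obstacle is the matching upper bound: at most one negative eigenvalue for large $t$. I would exhibit the codimension-one subspace $V=\{u\in\HH^t : u(0)=u(L)\}\subset\HH^t$ on which $\b^t$ is positive definite. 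Writing $u=a+v$ with $v\in H^1_0$ and $a=u(0)=u(L)$, the $\HH^t$-condition $\int_0^L u\,dx=2ta$ forces $\int_0^L v\,dx=(2t-L)a$, and a direct expansion yields
\[\b^t(u)=\int_0^L(a+v)^2\,dx-2ta^2=a^2(2t-L)+\int_0^L v^2\,dx,\]
which is strictly positive on $V\setminus\{0\}$ for $t>L/2$. The variational characterization then bounds the Morse index of $\b^t$ on $\HH^t$ above by one, so $(\HH,\a,\b^t)$ has at most one negative eigenvalue for large $t$, matching the lower bound from Proposition \ref{NegConv}.
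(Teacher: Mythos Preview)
Your verification of the moving Hilbert space conditions and the identification $\K=H^1_0(\Omega)$ match the paper's proof essentially verbatim. The real divergence is in the eigenvalue counts and the codimension computation, where the paper simply invokes \cite[Theorem~2]{BvBR} for the number of positive and negative eigenvalues, and for the codimension uses the orthogonal splitting $H^1(\Omega)=H^1_0(\Omega)\oplus H^1_\Delta(\Omega)$ into Dirichlet and harmonic parts, reducing the question to $\dim(H^1_\Delta\cap\HH^{\infty})$, which is infinite for $d\ge2$ (harmonic polynomials) and one for $d=1$ (linear functions).

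Your route is genuinely different and more self-contained. For positive eigenvalues you test against $H^1_0(\Omega)$ directly via Theorem~\ref{MVC}; for $d\ge2$ negative eigenvalues you build boundary bumps with the correct $\epsilon^{d-1}$ vs.\ $\epsilon^d$ scaling and feed them through the rank--nullity correction, which is a clean constructive substitute for the citation. Your $d=1$ upper bound is particularly nice: the explicit computation $\B^t(u)=a^2(2t-L)+\int v^2$ on the codimension-one subspace $V=\{u\in\HH^t:u(0)=u(L)\}$ is correct and gives the Morse-index bound without any external input, whereas the paper relies on \cite{BvBR} for this. The trade-off is that your arguments are more hands-on and only cover $t>T$ (respectively $t>L/2$), but the paper's own proof via the citation is likewise restricted to $t>|\Omega|/|\partial\Omega|$, so you are not losing anything there. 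The paper's harmonic-decomposition approach to the codimension is structurally cleaner and generalizes more readily, while yours stays closer to the bump constructions already used in Lemma~\ref{GenApp}.
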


\begin{proof}

Continuity of $\a(\cdot,\cdot)$ is clear. Verifying (C1$'$) is identical to the Neumann case once we show $\K = H^1_0(\Omega)$ since the constants intersect $H^1_0(\Omega)$ trivially. Indeed, the bilinear form 
\[\c(u,v) = \int_{\partial \Omega} Tu Tv \, dS,\] 
has kernel 
\[\K = \{u \in H^1(\Omega) : \int_{\partial \Omega} Tu Tv \, dS = 0 \text{ for all } v \in H^1(\Omega)\} = \{u \in H^1(\Omega) : Tu = 0\}.\]
Since $\Omega$ is Lipschitz, $H^1_0(\Omega) = \{u \in H^1(\Omega) : Tu = 0\}$ (see \cite[\S 2.4.3]{Necas}), and so 
\begin{equation}
\label{eq:DBCLimSpace}
\K = H^1_0(\Omega).
\end{equation}

To see (C3) note that weak continuity of $\b(\cdot,\cdot)$ follows from Lemma \ref{GenApp}. For weak continuity of $\c(\cdot,\cdot)$, let $\{u_n\}_n,\{v_n\}_n \subset H^1(\Omega)$ be weakly convergent sequences with limits $u$ and $v$. Since the trace map is a compact operator on Lipschitz domains \cite[\S 2.6.2]{Necas} it is also completely continuous \cite{Rud}. Therefore, $T(u_n) \to T(u)$ in $L^2(\partial \OO)$ and similarly for $v_n$. This shows that $\c(u_n,v_n) \rightarrow \c(u,v)$ and so $\c(\cdot,\cdot)$ is weakly continuous. 

The numbers of positive and negative eigenvalues follow directly from \cite[Theorem 2]{BvBR} (by setting $\sigma = -t$ for $t > |\Omega|/|\partial \Omega|$). 

To compute $\codim_{\HH^{\infty}}(\K \cap \HH^{\infty})$ recall that $H^1(\Omega) = H^1_0(\Omega) \oplus H^1_{\Delta}(\Omega)$, where 
\[H^1_{\Delta}(\Omega) = \{u \in H^1(\Omega) : \int_{\Omega} \nabla u \cdot \nabla v \, dx = 0 \text{ for each } v \in H^1_0(\Omega)\}\] 
is the subspace of harmonic functions in $H^1(\Omega)$. This also induces the decomposition $\HH^{\infty} = (H^1_0(\Omega) \cap \HH^{\infty}) \oplus (H^1_{\Delta}(\Omega) \cap \HH^{\infty})$. Together with (\ref{eq:DBCLimSpace}) we have
\[\codim_{\HH^{\infty}}(\K \cap \HH^{\infty}) = \codim_{\HH^{\infty}}(H^1_0(\Omega) \cap \HH^{\infty}) = \dim(H^1_{\Delta} \cap \HH^{\infty}).\]

By the Subspace Lemma \ref{subspace}, we know 
\[\HH^{\infty}_{\Delta} = H^1_{\Delta}(\Omega) \cap \HH^{\infty} = \{u \in H^1_{\Delta}(\Omega) : \int_{\Omega} u \, dx = 0 ~ \text{and} ~ \int_{\partial \Omega} u \, dS = 0\}.\] 
When $d \geq 2$, the space $H^1_{\Delta}(\Omega)$ is infinite dimensional as it contains the harmonic polynomials. Since $\HH^{\infty}_{\Delta}$ has codimension at most two in $H^1_{\Delta}(\Omega)$ we know $\dim(\HH^{\infty} \cap H^1_{\Delta}(\Omega)) = \infty$. When $d = 1$, the only harmonic functions are the linear polynomials so a direct calculation shows $\dim(\HH^{\infty}_{\Delta}) = 1$.

Alternatively, one could compute $\codim_{\HH^{\infty}}(H^1_0(\Omega) \cap \HH^{\infty})$ more directly. If $d = 1$ and $\Omega = (-1,1)$ one can show that an element of $\HH^{\infty}$ differs from an element of $H^1_0(\Omega) \cap \HH^{\infty}$ by a linear function, which shows that $\codim_{\HH^{\infty}}(H^1_0(\Omega) \cap \HH^{\infty}) = 1$. If $d \geq 2$ then one can construct infinitely many functions in $\HH^{\infty}$ that have disjoint supports and are nonzero on the boundary. These functions can not differ from each other by elements of $H^1_0(\Omega)$. Therefore, their span is an infinite dimensional subspace of the quotient so that $\codim_{\HH^{\infty}}(H^1_0(\Omega) \cap \HH^{\infty}) = \infty$.
\end{proof}

\subsection*{Proof of Proposition \ref{prop:MatPen}: Matrix Pencil Convergence}

Part (ii) of the proposition follows immediately from the Convergence Theorem \ref{SpecConv}. To show the remaining parts let $J_+^t$ denote the number of positive eigenvalues of $(\R^d,\a,\b^t)$ and similarly for $J_-^t$ and $J_{\infty}^t$.

First we will show that $J^t_{\infty} = \dim(\Ker(\b^t)) = 0$ for all large enough $t$. By definition of $B^t$
\[\Ker(\b^t) = \{v \in \R^d : C v = t^{-1} B v\}.\]
Hence, it suffices to show that the eigenvalue problem $(\R^d,\c,\b)$ only has finitely many nonzero finite eigenvalues.

Let $v$ be a (nonzero) eigenvector of $(\R^d,\c,\b)$ with eigenvalue $\mu$. If $\mu \neq 0$ we must have $C v \neq 0$ since $\Ker(B) \cap \Ker(C)$ is trivial so that $\C(v) > 0$ by Cauchy--Schwarz for $\c(\cdot,\cdot)$. Note that eigenvectors of $(\R^d,\c,\b)$ with distinct eigenvalues are $\c$-orthogonal. It follows that eigenvectors with distinct nonzero eigenvalues must be linearly independent. Thus, there can only be finitely many nonzero eigenvalues of $(\R^d,\c,\b)$ since $\R^d$ is finite dimensional.

Next we will show that there is a $T$ such that 
\begin{equation}
\label{eq:JtBound}
J_+^t \leq J_+ + J_{\infty}, \quad \text{for all} ~ t > T.
\end{equation}
Recall that each $\lambda_j^t$ with $j > J_+$ tends to $+\infty$ as $t \nearrow t_j$ for some $t_j \in (-\infty,+\infty]$ by Convergence Theorem \ref{SpecConv}. Since $J_+^t$ is a decreasing function of $t$ by Proposition \ref{prop:StabSpec} there can only be finitely many eigenvalues that tend to $+\infty$ in finite time. Thus, there is a $T$ such for all $t > T$ each positive eigenvalue of $(\R^d,\a,\b^t)$ either: converges to a positive eigenvalue of $(\Ker(\c),\a,\b)$ or tends to $+\infty$ as $t \to \infty$. By Lemma \ref{lem:Strong}, the positive eigenvalues that tend to $+\infty$ in infinite time have a subsequence of eigenvectors that converge strongly to an element of $\Ker(\b|_{\K \times \K})$, since weak convergence is equivalent to strong convergence in finite dimensions. There can be at most $J_{\infty} = \dim(\Ker(\b|_{\K \times \K}))$ such eigenvalues because the approximating eigenvectors can be chosen to be $\a$-orthonormal so that they converge to a linearly independent subset of $\Ker(\b|_{\K \times \K})$. This shows $J_+^t - J_+ \leq J_{\infty}$ and proves (\ref{eq:JtBound}).

Since $J_+^t + J_-^t + J_{\infty}^t = d = J_+ + J_{\infty} + J_- + \rank(C)$, inequality (\ref{eq:JtBound}) shows that
\begin{equation}
\label{eq:JtBound2}
J_-^t + J_{\infty}^t \geq d - (J_+ + J_{\infty}) = \rank(C) + J_-, \quad \text{for all}~ t > T.
\end{equation}

Since $J_{\infty}^t = 0$ for large $t$ we can increase $T$ if necessary so that (\ref{eq:JtBound2}) becomes
    \[J_-^t \geq \rank(C) + J_-, \quad \text{for all}~ t > T.\]
    
By Proposition \ref{NegConv} we know at least $\rank(C)$ negative eigenvalues increase to zero. On the other hand, at most $\rank(C)$ negative eigenvalues increase to zero since the corresponding eigenvectors can be chosen to be $\a$-orthogonal, and therefore a subsequence converge strongly to a linearly independent subset of $\Ker(\c)^{\perp_{\a}}$ by Lemma \ref{lem:ConNEv}. Thus, exactly $\rank(C)$ negative eigenvalues increase to zero. This leaves at least $J_-$ eigenvalues that do not tend to zero. By Lemma \ref{lem:ConNEv} each of these eigenvalues tend to a negative eigenvalue of the limiting problem. By extracting strongly convergent subsequences of the eigenvectors, there can be at most $J_-$ by pairing each approximating eigenvalue with the negative eigenvalue it converges to. 

This shows that the eigenvalues $\lambda^t_{-(\rank(C) + i)} \nearrow \lambda_{-i}$ for each $i = 1, \dots, J_-$. This leaves exactly $J_{\infty}$ positive eigenvalues remaining that must increase to $+\infty$ as $t \to +\infty$. \qed

\remark[Finite-time blow-up]{Suppose that $\lim_{t \nearrow t_j} \lambda_j^t = + \infty$ for some $t_j < \infty$. It follows from the Stability Proposition \ref{prop:StabSpec} that $\lambda_j^t$ exists on $(-\infty,t_j)$ but not for $t \geq t_j$. Since $\Ker(\b^t)$ is nontrivial for only finitely many $t$ by the proof of Proposition \ref{prop:MatPen} above, there is a neighborhood of $t_j$ such that $\Ker(\b^t)$ is nontrivial only at $t_j$ on that neighborhood. Thus, in order for $(\R^d,\a,\b^t)$ to have $d$ eigenvalues for $t > t_j$ there must be a negative eigenvalue that exists on $(t_j,+\infty)$ but not for $t \leq t_j$. By Corollary \ref{cor:NegInfLim}, this negative eigenvalue must tend to $-\infty$ as $t \searrow t_j$. This explains the ``reappearing" phenomenon mentioned in the caption of Figure \ref{fig:MatPen}.}


\section*{Acknowledgments}
The author would like to thank Richard Laugesen for his guidance on the writing of this paper and Giles Auchmuty for helpful email correspondences. Support from the U.S. Department of Education through the Graduate Assistance in Areas of National Need (GAANN) program and the University of Illinois Campus Research Board award RB19045 (to Richard Laugesen) is gratefully acknowledged.


\appendix
\section{Identification of \texorpdfstring{$\widetilde \K$: the restriction of $\K$ to $\Omega$}{Lg}}


The next lemma shows that the trace from one side of $\Gamma$ equal the trace from the other side. Recall that $\OO$ is a bounded Lipschitz domain, $\Omega'$ and $\Omega = \OO \setminus \overline{\Omega'}$ are open sets with Lipschtiz boundaries, and $\Gamma = \partial \Omega \cap \partial \Omega'$. 

\begin{lemma}
\label{lem:TrLem}
Let $T : H^1(\Omega) \rightarrow L^2(\partial \Omega)$ and $T' : H^1(\Omega') \rightarrow L^2(\partial \Omega')$ denote the trace operators and let $k \geq 1$. If $v \in H^k(\OO)$ then
\[T(D^{\beta}v|_{\Omega}) \big|_{\Gamma} = T'(D^{\beta}v|_{\Omega'}) \big|_{\Gamma},\]
for each multiindex $\beta$ with $|\beta| \leq k-1$. Additionally, $\partial \Omega = \Gamma \cup (\partial \Omega \cap \partial \OO)$.
\end{lemma}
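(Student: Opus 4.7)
The plan has two independent pieces: the trace identity, via density plus continuity of the trace, and the set equality $\partial\Omega = \Gamma \cup (\partial\Omega \cap \partial\OO)$, via a short point-set argument.

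For the trace identity, my plan is to exploit that $\OO$ is Lipschitz, so $C^{\infty}(\overline{\OO})$ is dense in $H^k(\OO)$. Choose $v_n \in C^{\infty}(\overline{\OO})$ with $v_n \to v$ in $H^k(\OO)$. Restriction from $\OO$ to $\Omega$ (and to $\Omega'$) is clearly continuous on every Sobolev scale, so $v_n|_{\Omega} \to v|_{\Omega}$ in $H^k(\Omega)$ and $v_n|_{\Omega'} \to v|_{\Omega'}$ in $H^k(\Omega')$. For any multiindex $\beta$ with $|\beta| \leq k-1$, differentiating drops the regularity by one, giving
\[D^{\beta}(v_n|_{\Omega}) \to D^{\beta}(v|_{\Omega}) \quad \text{in } H^1(\Omega),\]
and analogously on $\Omega'$. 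Since $\Omega$ and $\Omega'$ have Lipschitz boundary, the trace operators $T$ and $T'$ are bounded on $H^1$, so $T(D^{\beta}v_n|_{\Omega}) \to T(D^{\beta}v|_{\Omega})$ in $L^2(\partial\Omega)$ and $T'(D^{\beta}v_n|_{\Omega'}) \to T'(D^{\beta}v|_{\Omega'})$ in $L^2(\partial\Omega')$. For each smooth $v_n$, both traces on $\Gamma$ reduce to the classical boundary values of the continuous function $D^{\beta}v_n$, which obviously agree on $\Gamma \subseteq \partial\Omega \cap \partial\Omega'$. Restricting the two convergent sequences to $\Gamma$ and passing to the $L^2(\Gamma)$ limit yields the identity a.e.\ on $\Gamma$.

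For the set equality, note that $\Omega = \OO \setminus \overline{\Omega'}$ is open and $\Omega \subseteq \OO$ gives $\partial\Omega \subseteq \overline{\Omega} \subseteq \overline{\OO}$. The inclusions $\Gamma \subseteq \partial\Omega$ and $\partial\Omega \cap \partial\OO \subseteq \partial\Omega$ are immediate. For the reverse inclusion, let $x \in \partial\Omega$. If $x \in \partial\OO$ we are done; otherwise, using $\partial\Omega \subseteq \overline{\OO} = \OO \cup \partial\OO$, we have $x \in \OO$. Since $x \notin \Omega = \OO \setminus \overline{\Omega'}$ but $x \in \OO$, we must have $x \in \overline{\Omega'}$. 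On the other hand, $x$ is a limit point of $\Omega$, and $\Omega$ is disjoint from the open set $\Omega'$, so $x \notin \Omega'$; hence $x \in \overline{\Omega'} \setminus \Omega' \subseteq \partial\Omega'$, and therefore $x \in \partial\Omega \cap \partial\Omega' = \Gamma$.

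The main (mild) obstacle is the compatibility of the $L^2$ restrictions to $\Gamma$ coming from $\partial\Omega$ and from $\partial\Omega'$: one must verify that the $(d-1)$-dimensional surface measures the two ambient boundaries induce on $\Gamma$ agree, so that the two restriction maps land in the same $L^2(\Gamma)$ and can be compared. This is standard for Lipschitz pieces of boundary that coincide, but it is the one place in the argument where I would invoke the Lipschitz structure of $\partial\Omega$ and $\partial\Omega'$ beyond purely formal manipulation.
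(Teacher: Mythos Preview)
Your proposal is correct and follows essentially the same approach as the paper: density of smooth functions in $H^k(\OO)$, continuity of restriction and trace, and agreement of classical boundary values for the smooth approximants handles the trace identity, while the point-set argument for $\partial\Omega = \Gamma \cup (\partial\Omega \cap \partial\OO)$ is the same in spirit, with your version phrased via $x \in \overline{\Omega'} \setminus \Omega' = \partial\Omega'$ rather than the paper's ball-based phrasing. Your closing remark about compatibility of the surface measures on $\Gamma$ is a fair point that the paper does not address explicitly.
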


\begin{proof}
Since $C^{\infty}(\OO)$ is dense in $H^k(\OO)$ there is a sequence $\{\varphi_n\}_n \subset C^{\infty}(\OO)$ such that $\varphi_n \rightarrow v$ in the $H^k$-norm. In particular, 
\[\lVert (v - \varphi_n) \big|_{\Omega} \rVert_{H^k} \to 0 \quad \text{and} \quad \lVert (v - \varphi_n) \big|_{\Omega'}\rVert _{H^k} \to 0.\]
Since the trace is a bounded operator on $H^1$ for bounded open sets with Lipschitz boundaries (see \cite[\S 4.3]{EG}) this shows that 
\begin{equation}
\label{TrLim}
\lVert T(D^{\beta}(v - \varphi_n)|_{\Omega})  \rVert _{L^2(\Gamma)} \to 0 \quad \text{and} \quad \lVert T'(D^{\beta}(v - \varphi_n)|_{\Omega'})  \rVert _{L^2(\Gamma)} \rightarrow 0.
\end{equation}

Observe that because $D^{\beta} \varphi_n$ is continuous, 
\[T(D^{\beta}(v - \varphi_n)|_{\Omega})   = T(D^{\beta}v|_{\Omega})   - D^{\beta}\varphi_n   \quad \text{and} \quad T'(D^{\beta}(v - \varphi_n)|_{\Omega'})   = T'(D^{\beta}v|_{\Omega'})   - D^{\beta}\varphi_n  . \]
Using these equalities and (\ref{TrLim}) shows that
\[T(D^{\beta}v|_{\Omega}) \big|_{\Gamma} = \lim_{n \rightarrow \infty} D^{\beta}\varphi_n \big|_{\Gamma} =  T'(D^{\beta}v|_{\Omega'}) \big|_{\Gamma} \quad \text{in}~ L^2(\Gamma).\]

To see that $\partial \Omega = \Gamma \cup (\partial \Omega \cap \partial \OO)$ observe that the inclusion ``$\supset$'' follows immediately from the definition of $\Gamma$. To prove the forward inclusion let $x \in \partial \Omega$. If $x \in \partial \OO$ we are done so suppose that $x \in \OO$. Since $\OO = \Omega \sqcup (\overline{\Omega'} \cap \OO)$ every small enough ball centered at $x$ intersects both $\Omega$ and $\overline{\Omega'} \cap \OO$ nontrivially. None of these balls are contained in $\overline{\Omega'}$ because each one intersects $\Omega$. Thus, $x \in \partial \Omega'$, and so $x \in \Gamma$.

\end{proof}

Recall that $\K$ is the subspace of $\HH = H^k(\OO)$ or $H^k_{\partial \OO}(\OO)$ consistsing of functions that vanish on $\Omega'$, and $\widetilde \K$ is the space of functions in $\K$ restricted to $\Omega$.

\begin{lemma}
\label{lem:TildeK}
Under the above conditions on $\OO,\Omega,$ and $\Omega'$ we have that
\[\widetilde \K = H^k_{\Gamma}(\Omega) ~ \text{or} ~ H^k_{\partial \Omega}(\Omega), \quad \text{for} ~ k \geq 1,\]
when $\HH = H^k(\OO)$ or $H^k_{\partial \OO}(\OO)$, respectively.
\end{lemma}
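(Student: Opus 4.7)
The plan is to prove the equality $\widetilde{\K} = H^k_{\Gamma}(\Omega)$ (with the analogous identification $\widetilde{\K} = H^k_{\partial \Omega}(\Omega)$ when $\HH = H^k_{\partial \OO}(\OO)$) by establishing both inclusions separately, using Lemma \ref{lem:TrLem} as the main technical tool.

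For the forward inclusion $\widetilde{\K} \subset H^k_{\Gamma}(\Omega)$, I take $u \in \K$, set $\tilde{u} = u|_\Omega \in H^k(\Omega)$, and verify the trace conditions. For each multiindex $\beta$ with $|\beta| \leq k-1$, Lemma \ref{lem:TrLem} gives
\[
T(D^\beta \tilde{u})\big|_\Gamma = T'(D^\beta u|_{\Omega'})\big|_\Gamma,
\]
and the right side vanishes because $u \equiv 0$ on $\Omega'$. Hence $\tilde{u} \in H^k_\Gamma(\Omega)$. In the $\HH = H^k_{\partial \OO}(\OO)$ case the additional trace condition on $\partial \Omega \cap \partial \OO$ is inherited from the vanishing of $T_{\OO}(D^\beta u)$ on $\partial \OO$: approximate $u$ by smooth functions $\varphi_n \to u$ in $H^k(\OO)$ vanishing on $\partial \OO$, and use that their restrictions realize both the trace from $\Omega$ on $\partial \Omega \cap \partial \OO$ and the trace from $\OO$ on that common set, so the two agree in the limit.

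For the reverse inclusion, I take $v \in H^k_\Gamma(\Omega)$ and define $\tilde{v}$ on $\OO$ by $\tilde{v} = v$ on $\Omega$ and $\tilde{v} = 0$ on $\OO \setminus \Omega$. The key step is to prove $\tilde{v} \in H^k(\OO)$ by directly exhibiting its weak derivatives. For $|\beta| \leq k$, let $w_\beta = D^\beta v$ on $\Omega$ and $w_\beta = 0$ on $\OO \setminus \Omega$, which defines $w_\beta \in L^2(\OO)$. For a test function $\varphi \in C_c^\infty(\OO)$,
\[
\int_\OO \tilde{v}\, D^\beta \varphi\, dx = \int_\Omega v\, D^\beta \varphi\, dx,
\]
and iterated integration by parts on the Lipschitz domain $\Omega$ yields $(-1)^{|\beta|}\int_\Omega D^\beta v \cdot \varphi\, dx$ plus a sum of boundary integrals on $\partial \Omega$ involving traces of $D^\gamma v$ and $D^\gamma \varphi$ for $|\gamma| \leq k-1$. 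Using the decomposition $\partial \Omega = \Gamma \cup (\partial \Omega \cap \partial \OO)$ from Lemma \ref{lem:TrLem}, every boundary term vanishes: on $\partial \Omega \cap \partial \OO \subset \partial \OO$ because $\varphi$ and all its derivatives are zero there (since $\varphi \in C_c^\infty(\OO)$), and on $\Gamma$ because $T(D^\gamma v)|_\Gamma = 0$ by hypothesis. This gives $D^\beta \tilde{v} = w_\beta$ weakly, so $\tilde{v} \in H^k(\OO)$ and $\tilde{v} \in \K$ by construction, proving $v \in \widetilde{\K}$. In the $\HH = H^k_{\partial \OO}(\OO)$ setting, I further check that $T_\OO(D^\beta \tilde{v}) = 0$ on $\partial \OO$: on $\partial \Omega \cap \partial \OO$ the traces agree with those of $v$, which vanish by $v \in H^k_{\partial \Omega}(\Omega)$; on $\partial \OO \setminus \partial \Omega$ the function $\tilde{v}$ is identically zero in a neighborhood (inside $\Omega'$), so its traces there vanish as well.

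The main obstacle is rigorously tracking the boundary terms in the integration by parts for $k \geq 2$, since each integration produces distinct boundary contributions pairing $T(D^\gamma v)$ with $T(D^\delta \varphi)$ for various $\gamma, \delta$. The clean decomposition $\partial \Omega = \Gamma \cup (\partial \Omega \cap \partial \OO)$ (Lemma \ref{lem:TrLem}) and the trace-compatibility statement in that same lemma are precisely what make all such terms vanish, so the proof reduces to bookkeeping once these ingredients are in place.
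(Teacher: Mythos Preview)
Your proposal is correct and follows essentially the same strategy as the paper: the forward inclusion via Lemma~\ref{lem:TrLem}, and the reverse inclusion by showing the zero extension lies in $H^k(\OO)$ through integration by parts, with boundary terms killed by the decomposition $\partial\Omega = \Gamma \cup (\partial\Omega \cap \partial\OO)$. The only organizational difference is that the paper handles the reverse inclusion by induction on $k$ (reducing to a single $H^1$ integration by parts at each step), whereas you iterate the integration by parts directly and track all boundary terms at once; your version is also slightly more explicit than the paper in verifying that the zero extension lands in $H^k_{\partial\OO}(\OO)$ in that case.
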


\begin{proof}
First we show that $\widetilde \K \subset H^k_{\Gamma}(\Omega)$. Let $\tilde v \in \widetilde \K$ be the restriction of $v \in \K$ to $\Omega$. Note $v \equiv 0$ on $\Omega'$. By Lemma \ref{lem:TrLem}, $T((D^{\beta}v)|_{\Omega})$ is zero a.e.\ on $\Gamma$ for each multi-index $\beta$ with $|\beta| \leq k-1$. This shows that $\tilde v = v|_{\Omega} \in H^k_{\Gamma}(\Omega)$. In particular, this shows that if $v \in H^k_{\partial \OO}(\OO)$ then $\tilde v = v|_{\Omega} \in H^k_{\partial \Omega}(\Omega)$ since $\partial \Omega  = \Gamma \cup (\partial \Omega \cap \partial \OO)$ by Lemma \ref{lem:TrLem}.

To show that $H^k_{\Gamma}(\Omega) \subset \widetilde \K$, we will show that if $u \in H^k_{\Gamma}(\Omega)$ then its extension by zero to all of $\OO$ is an element of $H^k(\OO)$. Since $\lVert u\rVert _{H^k(\Omega)} < \infty$ already, it is enough to show that 
\[u^E = \begin{cases}
      u \quad \text{on} \quad \Omega\\
      0 \quad \text{on} \quad \OO \setminus \Omega,
\end{cases}
\]
has weak derivatives of order $k$. The remainder of the proof will proceed by induction on $k$.

\bigskip

\textbf{Base Case:} First assume that $k = 1$. Let $\partial_i u$ denote the weak partial derivative of $u$ in the variable $x_i$. We will show that 
\[(\partial_i u)^E = \begin{cases}
       \partial_iu \quad \text{on} \quad \Omega\\
      0 \quad \text{on} \quad \OO \setminus \Omega,
\end{cases}
\]
is the weak derivative of $u^E$. 

Since $\Omega$ has Lipschitz boundary, the usual integration-by-parts formula holds for every $\varphi \in C^{\infty}_c(\OO)$ (see \cite[\S 4.3]{EG}) so that
\begin{equation}
\label{eq:IBP}
\int_{\OO} u^E \partial_i \varphi \, dx = \int_{\Omega} u \partial_i \varphi \, dx = -\int_{\Omega} (\partial_i u) \varphi  \, dx + \int_{\partial \Omega} (Tu) \hspace{.5mm} \varphi \hspace{.5mm} n_i \, dS,
\end{equation}
where $n_i$ is the $i^{\text{th}}$-component of the outward unit normal vector to $\partial \Omega$.

Observe that $Tu|_{\Gamma} = 0$, and $\varphi|_{\partial \Omega \cap \partial \OO} = 0$ since $\varphi$ has compact support in $\OO$. Since $\partial \Omega \subset \Gamma \cup (\partial \Omega \cap \partial \OO)$ by Lemma \ref{lem:TrLem} the integrand of the boundary term in (\ref{eq:IBP}) is zero. Thus, 
\[\int_{\OO} u^E \partial_i \varphi \, dx = -\int_{\Omega} \partial_i u \, \varphi \, dx =  -\int_{\OO} (\partial_i u)^E \varphi \, dx, \quad \text{for every~} \varphi \in C_c^{\infty}(\OO),\]
which shows that the weak derivative of $u^E$ exists and equals $(\partial_i u)^E$, so that $u^E \in H^1(\OO)$.

\bigskip

\textbf{Inductive Step:} Suppose that $k > 1$ and let $u \in H^k_{\Gamma}(\Omega)$. In particular, $u \in H^{k-1}_{\Gamma}(\Omega)$, and so $u^E \in H^{k-1}(\OO)$ and has weak derivatives $D^{\beta} u^E = (D^{\beta} u)^E$ for each multi-index with $|\beta| \leq k-1$ by the inductive hypothesis. Since $D^{\beta} u \in H^1_{\Gamma}(\Omega)$, the base case implies that $(D^{\beta}u)^E \in H^1(\OO)$ so $D^{\beta}u^E \in H^1(\OO)$. Thus, $u^E \in H^k(\OO)$. 

\end{proof}

\section{Stability of spectrum}

The next proposition proves stability results for the spectrum in both the fixed and moving cases.

\begin{prop}[Stability of spectrum]
\leavevmode
\label{prop:StabSpec}

\noindent(i) Assume the fixed Hilbert space conditions hold. Then $(\HH,\a,\b^t)$ has at least $J_+$ positive and $J_-$ negative eigenvalues for all $t \in \R$. If $\lambda_j^{t_*}$ exists for some $t_*$ then $\lambda_j^t$ exists on $(-\infty,t_* + \delta)$ for some $\delta > 0$. Similarly, if $\lambda_{-j}^{t_*}$ exists for some $t_*$ then $\lambda_{-j}^t$ exists on $(t_* - \delta,+\infty)$ for some $\delta > 0$. Additionally, the number of positive and negative eigenvalues are decreasing and increasing functions of $t$, respectively.\\
(ii) Assume the moving Hilbert space conditions hold. Then $(\HH,\a,\b^t)$ has at least $J_+$ positive and $J_-$ negative eigenvalues for all sufficiently large positive and negative $t$, respectively. If $\lambda_j^{t_*}$ exists for some sufficiently large $t_* > 0$ then $\lambda_j^t$ exists on some open interval around $t_*$. Similarly, if $\lambda_{-j}^{t_*}$ exists for some sufficiently negative $t_* < 0$ then $\lambda_{-j}^t$ exists on some open interval around $t_*$. Additionally, the number of positive and negative eigenvalues are decreasing and increasing functions of $t$ for all sufficiently large positive and negative $t$, respectively. 
\end{prop}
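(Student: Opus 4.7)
The plan is to derive all the claims from the variational characterizations in Theorems \ref{VC} and \ref{MVC}, combined with the nonnegativity of $\C$ and the fact that $\C \equiv 0$ on $\K$. I will handle positive eigenvalues in detail; the negative eigenvalue statements follow from the symmetry $\lambda_{-j}(\HH,\a,\b^t) = -\lambda_j(\HH,\a,(-\b)^{-t})$ obtained by replacing $(\b,t)$ with $(-\b,-t)$.

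For the lower bound on the number of eigenvalues, I would first show that $\lambda_j^t$ exists for every $j \leq J_+$ and for all $t \in \R$ (fixed case) or all $t > T$ (moving case). By Theorem \ref{VC} applied to $(\K,\a,\b)$ there is a $j$-dimensional subspace $\Ss_j \subset \K$ with $c := \inf\{\B(u) : u \in \Ss_j,\ \A(u) = 1\} > 0$. Since $\c \equiv 0$ on $\K$, $\B^t = \B$ on $\K$ for every $t$, so $\Ss_j$ serves as a trial subspace in Theorem \ref{VC} (or in Theorem \ref{MVC}, noting that $\K \cap \Ker(\a) = \{0\}$ from (C1\SP) implies $\Ss_j \cap \Ker(\a) = \{0\}$), yielding $1/\lambda_j^t \geq c > 0$. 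The converse clauses of these theorems then deliver existence.

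For the openness of the set where $\lambda_j^t$ exists, assume $\lambda_j^{t_*}$ exists and choose via Theorem \ref{VC} (or \ref{MVC}) a $j$-dimensional trial subspace $\Ss_j$ (with $\Ss_j \cap \Ker(\a) = \{0\}$ in the moving case) realizing $c := \inf\{\B^{t_*}(u) : u \in \Ss_j,\ \A(u) = 1\} > 0$. For every $u \in \Ss_j$ with $\A(u) = 1$ and every $t$,
\[\B^t(u) = \B^{t_*}(u) - (t - t_*)\C(u) \geq c - (t - t_*)M,\]
where $M := \sup\{\C(u) : u \in \Ss_j,\ \A(u) = 1\} < \infty$ by norm continuity of $\C$ (Remark \ref{rem:WeakCont}) and compactness of the $\A$-unit sphere of the finite-dimensional $\Ss_j$. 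For $s \leq t_*$, nonnegativity of $\C$ gives $\B^s \geq \B^{t_*}$ on all of $\HH$, so the same $\Ss_j$ still yields $1/\lambda_j^s \geq c > 0$; for $t_* < t < t_* + c/M$ the displayed estimate keeps the infimum positive. Setting $\delta = c/M$ proves existence on $(-\infty,t_* + \delta)$ in the fixed case and on an open interval around $t_*$ in the moving case (valid as long as $|t| > T$). Monotonicity of $J_+^t$ (decreasing) and $J_-^t$ (increasing) is then an immediate corollary: whenever $\lambda_j^t$ exists, so does $\lambda_j^s$ for every admissible $s \leq t$, and the $\b \mapsto -\b$ symmetry transfers the statement to negative eigenvalues.

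The main obstacle is extending existence strictly past $t_*$, since increasing $t$ decreases $\B^t$ and could in principle collapse the supremum defining $1/\lambda_j^t$. The finite-dimensional perturbation estimate above resolves this by converting strict positivity of the infimum at $t_*$ into explicit positivity on an interval $(t_*, t_* + c/M)$ through the Lipschitz continuity of $t \mapsto \B^t$ on the compact $\A$-unit sphere of $\Ss_j$.
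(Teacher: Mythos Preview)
Your proposal is correct and follows essentially the same approach as the paper's proof: both use a trial subspace $\Ss_j \subset \K$ (where $\B^t = \B$) for the lower bound on the number of eigenvalues, then for openness take a trial subspace at $t_*$ and use the Lipschitz estimate $\B^t(u) \geq c - (t-t_*)M$ with $M = \sup_{\Ss_j}\C$ to push existence past $t_*$, with the negative-eigenvalue statements obtained via the $(\b,t) \mapsto (-\b,-t)$ symmetry. The paper's argument differs only in notation (it writes $\delta$ and $M_{\C}$ for your $c$ and $M$) and in phrasing the monotonicity of $J_+^t$ as a contradiction rather than a direct corollary.
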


\begin{proof}

We first prove Proposition \ref{prop:StabSpec} for the positive eigenvalues. Recall that $J_+$ is the number of positive eigenvalues of $(\K,\a,\b)$, and let $J = J_+$ until the end of the proof for notational ease. There is nothing to prove when $J = 0$ so assume that $J \geq 1$. 

Let $t$ be fixed ($t > T$ in the moving Hilbert space case). By applying the variational characterization in Theorem \ref{VC} to $(\K,\a,\b)$ we know that $\B^t = \B$ is positive on the $\a$-unit sphere of some $J$-dimensional subspace $\Ss_J \subset \K = \Ker(\c)$.

The same variational characterization applied to $(\HH,\a,\b^t)$ shows that $(\HH,\a,\b^t)$ has at least $J$ positive eigenvalues. The same holds in the Moving Hilbert space case by the variational characterization in Theorem \ref{MVC} since $\K \cap \Ker(\a)$ being trivial implies that $\Ss_J \cap \Ker(\a)$ is trivial.

\bigskip

Suppose that $\lambda_j^{t_*}$ exists. To see that $\lambda_j^t$ exists in an open interval around $t_*$, observe that by the variational characterizations in Theorems \ref{VC} and \ref{MVC} there is a $j$-dimensional subspace $\Ss_j$ so that $\delta = \inf\{\B^{t_*}(u) : u \in \Ss_j, \A(u) = 1\} > 0$, with $\Ss_j \cap \Ker(\a) = \{0\}$ in the moving Hilbert space case. Let $M_{\C} = \max\{\C(u) : u \in \Ss_j, \A(u) = 1\}$ so that $\B^t > 0$ on the
$\a$-unit sphere of $\Ss_j$ when $t \in (-\infty, t_* + \frac{\delta}{M_{\C}})$. The variational characterizations show that $(\HH,\a,\b^t)$ has $j$ eigenvalues for each $t \in (-\infty, t_* + \frac{\delta}{M_{\C}})$ in the fixed Hilbert space case and $(\HH^t,\a,\b^t)$ has $j$ eigenvalues for each $t \in (T,t_* + \frac{\delta}{M_{\C}})$ in the moving Hilbert space case. 

\bigskip
Let $J_+^t$ denote the number of positive eigenvalues of $(\HH,\a,\b^t)$. To see that $J^t_+$ is a decreasing function of $t$ suppose, towards a contradiction, that $s_2 > s_1$ (both larger than $T$ in the moving Hilbert space case) are such that $J_+^{s_2} > J_+^{s_1}$. Thus, there is an eigenvalue that exists at $t = s_2$ but not at $t = s_1$, which contradicts the above stability result. Conclude that $J_+^t$ is decreasing in $t$.

\bigskip

To see the analogous statements for the negative eigenvalues note that 
\begin{equation}
\label{eq:PosNegId}
\lambda_j(\HH,\a,(-\b)^{-t}) = -\lambda_{-j}(\HH,\a,-\b^t).
\end{equation}
In the fixed Hilbert space case, applying the above result for the positive eigenvalues to $\lambda_j(\HH,\a,(-\b)^{-t})$ we see that it exists for each $j = 1, \dots, J_+(\K,\a,-\b)$. Since $J_-(\K,\a,\b) = J_+(\K,\a,-\b)$, using $(\ref{eq:PosNegId})$ we obtain the result for the negative eigenvalues. Existence of $\lambda^t_{-j}$ for $t \in (t_* - \delta,+\infty)$ for some $\delta > 0$ follows from the result for positive eigenvalues and (\ref{eq:PosNegId}). Since the number of negative eigenvalues of $(\HH,\a,\b^t)$ is equal to the number of positive eigenvalues of $(\HH,\a,(-\b)^{-t})$ by (\ref{eq:PosNegId}) we have the monotonicity result for the number of negative eigenvalues.

The same holds for $(\HH^t,\a,\b^t)$ in the moving Hilbert space case, but now we must take $t < -T$ to apply the above result to $(\HH^t,\a,-\b-(-t)\c)$.

\bigskip

Each eigenvalue has finite multiplicity due to \cite[Theorem 4.3]{Auch}.

\end{proof}

\section{Lipschitz continuity of spectrum}

\begin{prop}[Lipschitz continuity of spectrum]
\label{ContSpec}
Assume that $j \geq 1$ and that $\lambda_j^{t_*}$ exists for some $t_*$. If the fixed Hilbert space conditions hold then the functions $t \mapsto 1/\lambda^t_{\pm j}$ are Lipschitz continuous with constant $1/\lambda_1(\HH,\a,\c)$ whenever they exist. If the moving Hilbert space conditions hold and $t_* > 0$ is sufficiently large then $t \mapsto 1/\lambda^t_{\pm j}$ is Lipschitz continuous on a neighborhood of $\pm t_*$.
\end{prop}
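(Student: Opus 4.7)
The plan is to combine the variational characterizations (Theorem~\ref{VC} in the fixed case, Theorem~\ref{MVC} in the moving case) with the elementary identity
\[
\B^t(u) - \B^s(u) = -(t-s)\,\C(u), \qquad u \in \HH.
\]
In the fixed Hilbert space case, since $\C$ is nonnegative and nontrivial and $\A$ is coercive, the first positive eigenvalue $\lambda_1(\HH,\a,\c)$ exists, and its variational characterization gives $\C(u) \leq 1/\lambda_1(\HH,\a,\c)$ whenever $\A(u) = 1$. Combined with the displayed identity, this yields the pointwise bound $|\B^t(u) - \B^s(u)| \leq |t-s|/\lambda_1(\HH,\a,\c)$ on the $\A$-unit sphere. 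Taking successively the infimum over $u \in \Ss_j$ and the supremum over $j$-dimensional trial subspaces $\Ss_j$ (both $1$-Lipschitz operations) Theorem~\ref{VC} converts this into $|1/\lambda_j^t - 1/\lambda_j^s| \leq |t-s|/\lambda_1(\HH,\a,\c)$. The negative eigenvalues follow by applying the positive case to the problem associated to $-\b^t = (-\b) - (-t)\c$, which fits the same framework with $\b$ replaced by $-\b$, parameter $-t$, and the same $\c$; the identity $\lambda_{-j}(\HH,\a,\b^t) = -\lambda_j(\HH,\a,-\b^t)$ then transfers the Lipschitz estimate with the same constant.

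For the moving Hilbert space case the trial subspaces live in the varying space $\HH^t$, so I would apply Theorem~\ref{VC} directly to $(\HH^t,\a,\b^t)$ and transport an almost-optimal trial subspace $\Ss_j^t \subset \HH^t$ to $\Ss_j^s := P^s \Ss_j^t \subset \HH^s$ via the projection from Lemma~\ref{PseudoMon}, valid for $|s|,|t| > T$ (this is why $t_*$ must be sufficiently large or sufficiently negative). Since $P^s u - u \in \Ker(\a)$ we have $\A(P^s u) = \A(u)$, and a short expansion gives
\[
\B^s(P^s u) \;=\; \B^s(u) - \B^s(Q^s u) \;=\; \B^t(u) - (s-t)\C(u) - \B^s(Q^s u),
\]
where the final term is nonpositive for $s > T$ by part (i) of the Projection Lemma (and nonnegative after the sign flip). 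The uniform coercivity of $\A$ on $\HH^t$ from Lemma~\ref{MCoercivity} combined with the norm continuity of $\c$ yields $\C(u) \leq M\A(u)$ for all $u \in \HH^t$ with $M$ independent of $t$ in a neighborhood of $t_*$; passing to the infimum over $\Ss_j^t$ (using that $P^s$ is a bijection from $\Ss_j^t$ onto $\Ss_j^s$) and then to the supremum gives the one-sided estimate $1/\lambda_j^s \geq 1/\lambda_j^t - M|s-t|$, and swapping the roles of $s$ and $t$ yields the local Lipschitz bound. Negative eigenvalues and the case near $-t_*$ reduce to this one by the symmetry $-\b^t = (-\b)^{-t}$ used above.

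The main obstacle is the moving case, where the comparison must be carried out across the distinct trial spaces $\HH^t$ and $\HH^s$; the key observation, already embedded in the Projection Lemma, is that $P^s$ modifies a trial subspace only by elements of $\Ker(\a)$, so it preserves $\A$-values exactly and moves $\B^s$-values only in the favorable direction. The uniform coercivity constant in Lemma~\ref{MCoercivity} is what converts this into a neighborhood-independent Lipschitz constant.
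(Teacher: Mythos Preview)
Your proposal is correct and, in the fixed Hilbert space case, essentially identical to the paper's argument: both exploit $\B^t(u)-\B^s(u)=-(t-s)\C(u)$ together with the bound $\C(u)\le 1/\lambda_1(\HH,\a,\c)$ on the $\A$-unit sphere, then pass through the sup--inf variational characterization (you phrase this as ``1-Lipschitz operations''; the paper picks the specific trial subspace $\Span\{u_1^s,\dots,u_j^s\}$, obtains a one-sided bound, and swaps $s,t$). The treatment of negative eigenvalues via $\lambda_{-j}(\HH,\a,\b^t)=-\lambda_j(\HH,\a,(-\b)^{-t})$ is also the same.

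In the moving case you take a slightly different route. The paper invokes Theorem~\ref{MVC}, whose trial family $\{\Ss_j:\Ss_j\cap\Ker(\a)=\{0\}\}$ is \emph{independent of $t$}, so the sup--inf comparison goes through exactly as in the fixed case; the only extra ingredient is a uniform bound on $\sup\{\C(u):\A(u)=1,\ u\in\HH^s\}$ over $s$ in a neighborhood of $t_*$, which follows from the uniform coercivity in Lemma~\ref{MCoercivity} and norm continuity of $\c$. You instead apply Theorem~\ref{VC} to $(\HH^t,\a,\b^t)$ and transport trial subspaces by hand via $P^s$. Your expansion $\B^s(P^su)=\B^s(u)-\B^s(Q^su)$ is correct (since $\b^s(P^su,Q^su)=0$), and the sign of $\B^s$ on $\Ker(\a)$ for $s>T$ indeed makes $-\B^s(Q^su)\ge 0$. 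This is valid, but note that it is precisely the content of the proof of Theorem~\ref{MVC} (compare inequality~\eqref{eq:MVC}); so your moving-case argument effectively inlines that proof. Invoking Theorem~\ref{MVC} directly, as the paper does, buys you a $t$-independent trial family and avoids the transport step altogether.
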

\noindent In the moving Hilbert space case it is possible that the functions $t \mapsto 1/\lambda^t_{\pm j}$ are only locally Lipschitz continuous when they exist since there can be curves of eigenvalues passing through zero. See point 1 in the discussion at the end of Section \ref{sec:MR}.

\begin{proof}
First we prove the proposition for the positive eigenvalues. By Proposition \ref{prop:StabSpec} there exists a $\delta > 0$ such that $\lambda_j^t$ exists on $I_F = (-\infty,t_* + \delta)$ in the fixed Hilbert space case, and on $I_M = (t_* - \delta, t_* + \delta) \subset (T,+\infty)$ in the moving Hilbert space case since we can assume $t_* > T$. Let $s,t \in I_F$ or $I_M$ and set $\widehat{\Ss_j} = \Span\{u_1^s, \dots, u_j^s\}$ where $\{u_1^s, \dots, u_j^s\}$ is a collection of $j$ linearly independent eigenvectors corresponding to the first $j$ positive eigenvalues of $(\HH,\a,\b^s)$. Observe that $\B^t(u) = -(t - s)\C(u) + \B^s(u)$ by adding and subtracting $s \C(u)$. 

By the variational characterizations in Theorems \ref{VC} and \ref{MVC} and noting that $\widehat{\Ss_j}$ intersects $\Ker(\a)$ trivially since $\widehat{\Ss_j} \subset \HH^s$ we have
\begin{align}
&\frac{1}{\lambda_j^t} = \sup_{\Ss_j : \Ss_j \cap \Ker(\a) = \{0\}} \inf \{\B^t(u) : u \in \Ss_j \text{ and } \A(u) =1\}\notag\\
&\geq \inf \{\B^t(u) : u \in \widehat{\Ss_j} \text{ and } \A(u) =1\}\notag\\
&= \inf \{-(t - s)\C(u) + \B^s(u) : u \in \widehat{\Ss_j} \text{ and } \A(u) =1\}\notag\\
&\geq -|t-s|\sup\{\C(u) : u \in \widehat{\Ss_j} \text{ and } \A(u) =1\} + \inf\{ \B^s(u) : u \in \widehat{\Ss_j} \text{ and } \A(u) =1\}\notag\\
&= -|t-s|\sup\{\C(u) : u \in \widehat{\Ss_j} \text{ and } \A(u) =1\} + \frac{1}{\lambda_j^s}.\notag
\end{align}

Expanding the supremum from the $\a$-unit sphere of $\widehat \Ss_j$ to that of $\HH$ or $\HH^s$ shows that 
\begin{equation}
\label{eq:Lip}
\frac{1}{\lambda_j^t} \geq - \mu |t-s| + \frac{1}{\lambda_j^s}, \quad \text{where} ~ \mu = 1/\lambda_1(\HH,\a,\c) ~ \text{or} ~ \sup_{s \in I_M} \{1/\lambda_1(\HH^s,\a,\c)\},
\end{equation}
in the fixed or moving cases, respectively. To see this note that the problems $(\HH,\a,\c)$ and $(\HH^s,\a,\c)$ for $s \in I_M$ satisfy the fixed and moving Hilbert space conditions, and each have at least one positive eigenvalue given by the expanded supremum since $\C(\cdot)$ is positive somewhere on $\HH$ and $\HH^s$. The supremum over $s \in I_M$ is finite since $\c(\cdot,\cdot)$ is a bounded bilinear form by weak continuity, so that $\C(u) \lesssim \lVert u \rVert^2 \lesssim \A(u) \lesssim 1$ since $\A$ is uniformly coercive on $\HH^s$ by Lemma \ref{MCoercivity}.

At this point we have made no assumptions about the relation between $t$ and $s$. Therefore, we can combine (\ref{eq:Lip}) and the same inequality with $s$ and $t$ swapped so that
\[\left| \frac{1}{\lambda_j^s} - \frac{1}{\lambda_j^t} \right| \leq \mu |t-s|. \]
This shows that $t \mapsto 1/\lambda_j^t$ is Lipschitz continuous as stated in the proposition. Moreover, in the fixed case we can take $I_F$ to be the maximal set on which $\lambda_j^t$ exists to see $t \mapsto 1/\lambda_j^t$ is uniformly Lipschitz with constant $1/\lambda_1(\HH,\a,\c)$. 

The statements for the negative eigenvalues follow by applying the above continuity results for positive eigenvalues to the right side of
\[\lambda_{-j}(\HH,\a,\b^t) = -\lambda_j(\HH,\a,-(\b-t\c)) = - \lambda_j(\HH,\a,(-\b)^{-t}).\]
\end{proof}


\end{document}